\documentclass[onefignum,onetabnum]{siamart190516}
\usepackage{amsfonts}
\usepackage{graphicx}
\usepackage{bm}
\usepackage{mathtools}
\usepackage{amsmath}
\usepackage{amssymb}
\usepackage{hyperref,bookmark}
\usepackage{enumitem}
\usepackage[algo2e]{algorithm2e}
\usepackage{enumitem}
\linespread{1}

\DeclarePairedDelimiter{\norm}{\lVert}{\rVert}
\def\[#1\]{\begin{align*}#1\end{align*}}


\DeclareRobustCommand{\vect}[1]{\bm{#1}}
\pdfstringdefDisableCommands{%
  \renewcommand{\vect}[1]{#1}%
}

\newsiamremark{remark}{Remark}
\newsiamremark{hypothesis}{Hypothesis}
\crefname{hypothesis}{Hypothesis}{Hypotheses}
\newsiamthm{claim}{Claim}

\graphicspath{{figs/multiplier/}}

\headers{Conservative Integrators for PWS with Transversal Dynamics}
{A. Hirani, A. Wan and N. Wojtalewicz}

\title{Conservative Integrators for Piecewise Smooth Systems with Transversal Dynamics}


\author{Anil N. Hirani\thanks{Department of Mathematics, University of Illinois at Urbana-Champaign, Urbana, IL
    (\email{hirani@illinois.edu, npw2@illinois.edu}).}
  \and Andy T. S. Wan\thanks{Department of Mathematics, University of
    Northern British Columbia, Prince George, BC
  (\email{andy.wan@unbc.ca}).}
  \and Nikolas Wojtalewicz\footnotemark[1]
}

\usepackage{amsopn}

\ifpdf
\hypersetup{
  pdftitle={Conservative Integrators for Piecewise Smooth Systems with Transversal Dynamics},
  pdfauthor={A. N. Hirani, A. T.S. Wan and N. Wojtalewicz}
}
\fi


\pdfsuppresswarningpagegroup=1 

\begin{document}

\maketitle

\begin{abstract}
We introduce conservative integrators for long term integration of piecewise smooth systems with transversal dynamics and piecewise smooth conserved quantities. In essence, for a piecewise dynamical system with piecewise defined conserved quantities such that its trajectories cross transversally to its interface, we combine Mannshardt's transition scheme and the Discrete Multiplier Method to obtain conservative integrators capable of preserving conserved quantities up to machine precision and accuracy order. We prove that the order of accuracy of the conservative integrators is preserved after crossing the interface in the case of codimension one number of conserved quantities. Numerical examples illustrate the preservation of accuracy order and conserved quantities across the interface.
\end{abstract}

\begin{keywords}
  discontinuous ODEs, time-stepping methods, event driven method, dynamical systems, conservative methods, Discrete Multiplier Method, long-term integration
\end{keywords}

\begin{AMS} 65L05, 65L12, 65L20, 65L70, 65P10, 37M05, 37M15
\end{AMS}

\section{Introduction}\label{sec:intro}
Piecewise defined ordinary differential equations (ODEs) arise naturally in a variety of fields including electrical circuits, mechanical systems, biology and others. Electrical circuits with an ideal diode have different dynamics based on whether the diode is conducting electricity or not~\cite{AcBoBr2011}.  In mechanical systems with friction, the Coulomb model of friction allows for a sudden change in the frictional force depending on whether the object is moving or stationary. In gene regulatory networks the dynamics depends on which genes are turned on or off~\cite{PlKj2005}. 

We develop a conservative numerical method for integrating ODEs corresponding to piecewise defined vector fields. Our results depend only on the local piecewise structure of the vector fields. Locally, we assume the phase space is subdivided into two subdomains separated from each other by a smooth codimension 1 embedded submanifold called the \emph{switching surface}, or equivalently the \emph{interface}.  For example, in two dimensions, the phase space consists of two subdomains is separated by a simple smooth curve and in three dimensions, the phase space consists of two subdomains is separated by smooth surface. We do not address the case of higher codimension switching submanifolds such as a curve singularity in three dimensions~\cite{GuHa2017}.

A piecewise defined vector field is defined on both subdomains with a different definition on each side of the interface. The vector field is not required to be defined on the switching surface. However, in our subsequent analysis, we require that the vector fields from the two sides be continuous up to the interface, without requiring their limit from both sides agree on the interface. Moreover, we will require the vector field to be \emph{transverse} to the surface and point in such a way as to allow the piecewise trajectories to continue after arriving at the surface. Thus, we do not consider \emph{sliding motion} on the switching surface~\cite{PlKj2005}. Furthermore, we assume the smooth vector fields defined on each subdomain satisfy the standard conditions needed for local existence and uniqueness of solutions.

Our main goal is long term integration of such vector fields across repeated crossings of the switching surface, while preserving one or more conserved quantities, even though the conserved values may be different in each of the two subdomains. Indeed, even the conservation laws can be different in each subdomain.

The theory of ODEs with discontinuous right hand sides is well-developed. Fillipov and others~\cite{Filippov1988} have generalized the definition of solution of such an ODE by replacing the equation with a differential inclusion (so that at the discontinuities the vector field belongs to a set rather than being equal to a specific value) and investigated conditions for existence and uniqueness and dependence on parameters and initial conditions. The appearance of chaos and bifurcation theory for such systems has been another direction of investigation~\cite{BeBuChKo2008}. Numerical methods for such ODEs have also been developed~\cite{Mannshardt1978,GeOs1984,AcBr2008}. For a survey of such methods see~\cite{DiLo2012}. 
An alternative approach is in~\cite{FeMaOrWe2003} where nonsmooth Lagrangian mechanics is discretized  using a discrete variational principle leading to geometric variational integrators. The emphasis in~\cite{FeMaOrWe2003} is on rigid body motion with collisions. Their approach works when a Lagrangian formulation is available and discrete conservation of the energy, momentum or symplectic form is sought. In contrast, our integrator does not need a Lagrangian formulation and is applicable for arbitrary conserved quantities. Another difference between our work and~\cite{FeMaOrWe2003} is our emphasis on analysis of the order of accuracy before and after the interface is encountered.

The main contribution of this paper is the development of a \emph{conservative} numerical method with accuracy guarantees, for piecewise smooth dynamics in which there are possibly different conservation laws before and after a trajectory encounters the switching surface. We do this by extending the \emph{Discrete Multiplier Method} (DMM)~\cite{WaBiNa2017}  for dynamical systems with conserved quantities to yield conservative numerical schemes which allow for repeated interface crossings and jumps in the conserved quantity across the switching surface. For long term integration with conserved quantities of codimension 1 and a time-independent interface, our method guarantees the correct trajectory of the dynamical system with error only in time, even after repeated crossings of the interface. We show that the accuracy order of our method is preserved on crossing the interface by applying \emph{Mannshardt's transition scheme}~\cite{Mannshardt1978} to conservative methods. Furthermore, we show that  geometrical properties inherent to conservative methods can lead to a simpler proof of order preservation.

\subsection{Background on piecewise smooth systems}
\label{subsec:background}

First, we introduce some notations and then discuss in detail the piecewise smooth (PWS) systems we are interested throughout this paper.
Let $d, d'$ be positive integers and let $U\subset \mathbb{R}^{d}$ and $V\subset \mathbb{R}^{d'}$ be open subsets. $C(U\rightarrow V)$ denotes the space of continuous functions and $Lip(U\rightarrow V)$ denotes the spaces of Lipschitz continuous functions from $U$ to $V$. For a positive integer $k$, $C^k(U\rightarrow V)$ denotes the space of functions all of whose partial derivatives up to $k$-th order exist and are continuous from $U$ to $V$. A closed ball of radius $r>0$ with a center at $\bm{a}\in \mathbb{R}^d$ will be denoted $B_r(\bm{a})$. 

The setting we are interested in is a PWS system on a bounded open interval $I$ with its phase space on a bounded open subset $U\subset \mathbb{R}^d$ separated into two parts by a codimension 1 hypersurface. Specifically, a function $g\in C^1(U\rightarrow \mathbb{R})$ with $\nabla g \neq \bm{0}$ on $U$ is called a switching function and defines a switching surface $S := \{\bm{x} \in U \;\vert\; g(\bm{x}) = 0\}$, which is a $C^1$ $(d-1)$-dimensional embedded submanifold of $U \subset \mathbb{R}^d$. We denote the two parts of the phase space as $U_{\pm}$, where $U_+ := \{ \bm{x} \in U \; \vert \; {g}(\bm{x}) > 0\}$ and $U_- := \{ \bm{x} \in U \; \vert \; {g}(\bm{x}) < 0\}$, and assume $U_{\pm} \neq \varnothing$\footnote{If either of $U_{\pm}$ is empty, then the dynamical system is smooth.}. Moreover, we assume there are two vector fields $\bm{f}_\pm \in C^1(I\times (U_\pm\cup S)\rightarrow \mathbb{R}^d)$ which defines a PWS system of the form:
\begin{align}
  \label{eq:typeA} \dot{\bm{x}}(t) &=
  \left\{\begin{aligned}
    \bm{f}_-(t, \bm{x}(t)),\quad & \text{if} \quad \bm{x}(t)\in U_-,\\
    \bm{f}_+(t, \bm{x}(t)),\quad & \text{if} \quad \bm{x}(t)\in U_+\, ,
  \end{aligned}\right.\\
  \bm{x}(t_0) &= \bm{x}_0 \in U\setminus S\, .  \nonumber
\end{align}
As $\dot{\bm{x}}(t)$ is undefined on $\bm{x}(t)\in S$, we need to decide on the dynamics upon reaching the interface $S$. We shall assume that the trajectories are \emph{transversal} to the switching surface and are such that the trajectories approaching the switching surface from one subdomain will exit into the other subdomain. This condition, known as the \emph{transversality condition} is satisfied if there exists a nonzero $\alpha_S\in \mathbb{R}$, referred to as the \emph{transversality constant}\footnote{In the notation for the transversality constant, we emphasize the dependence on $S$ but not on time. In the analysis the key estimates  since we will be interested in crossing $S$ in a particular time interval.}, so that,
\begin{equation}
\big(\nabla {g} \cdot \bm{f}_\pm\big)(t^*,\bm{x}^*)\geq  \alpha_S^2\, , \text{ for any } (t^*,\bm{x}^*)\in I\times S\, .
  \label{eq:transverseA}
\end{equation}
For example, this corresponds to a phase plot such as the first two plots in Figure~\ref{fig:vftypes}.
(We have defined transversality as both ($\pm$) terms $\big(\nabla {g} \cdot \bm{f}_\pm\big)$ being positive. To accommodate both being negative one can redefine the switching function $\tilde{g}:=-g$.)
\begin{figure}[ht]
  \centering
  \includegraphics[width=1in,trim=1in 0in 1in 0in, clip]
  {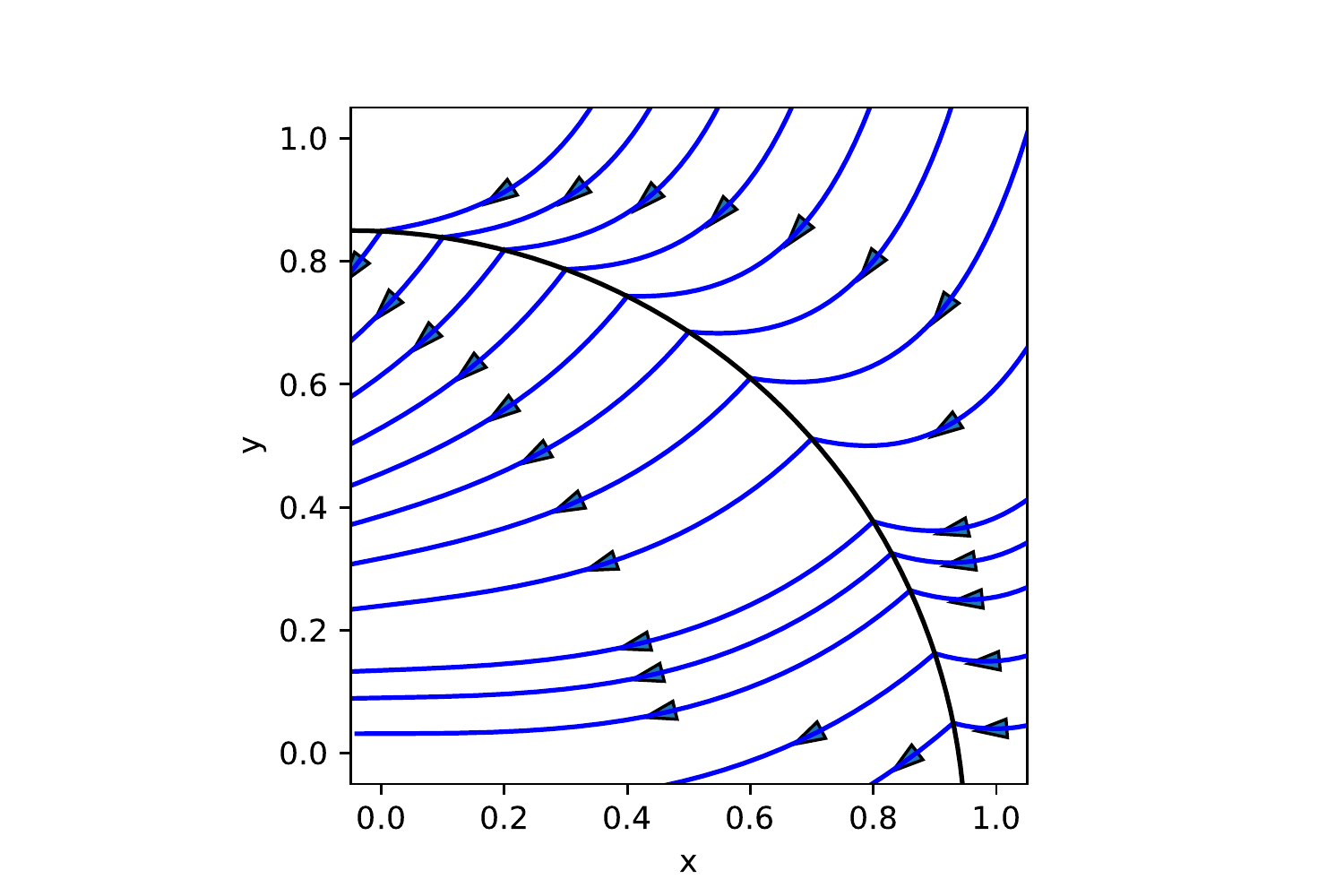}
  \includegraphics[width=1in,trim=1in 0in 1in 0in, clip]
  {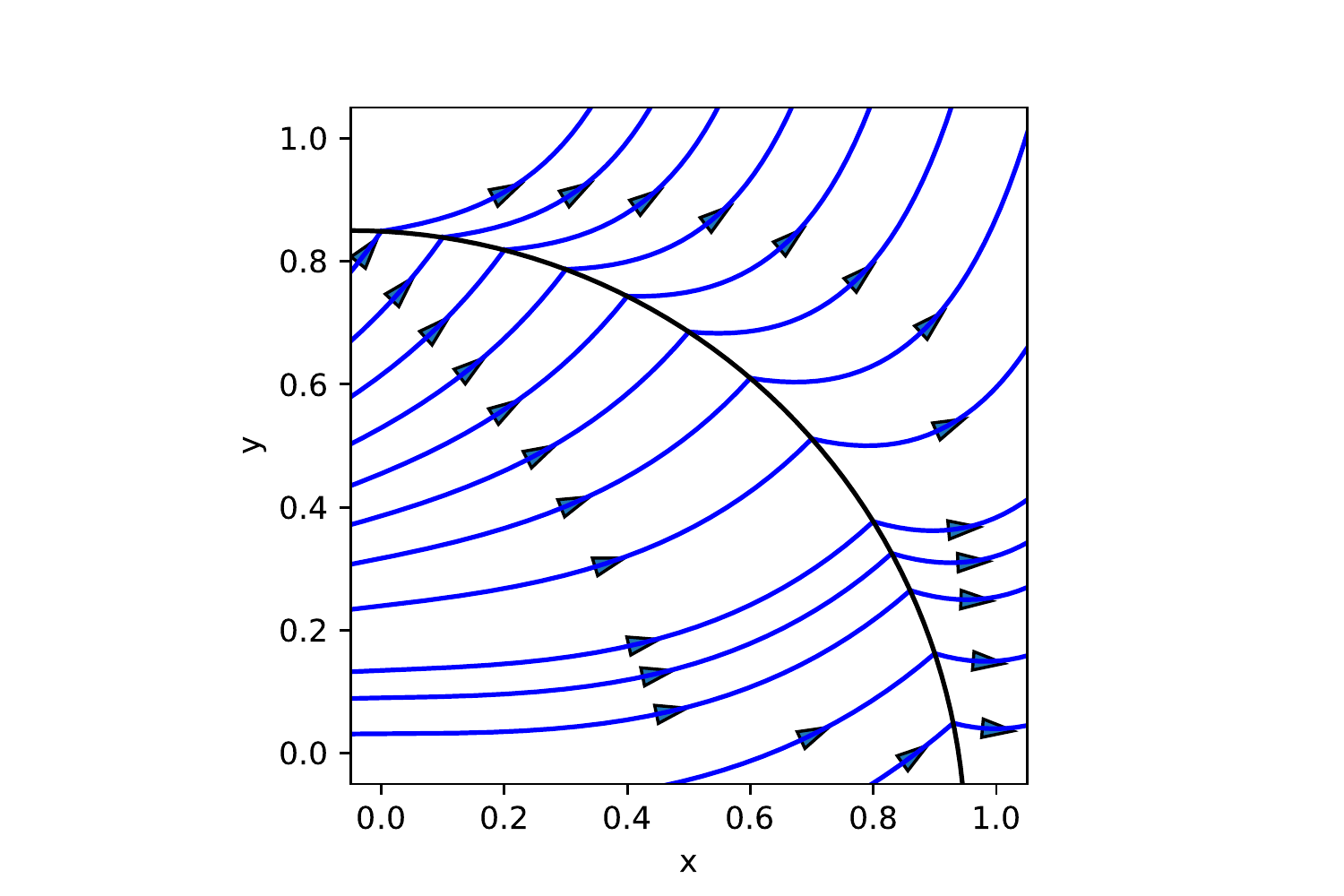}
  \includegraphics[width=1in,trim=1in 0in 1in 0in, clip]
  {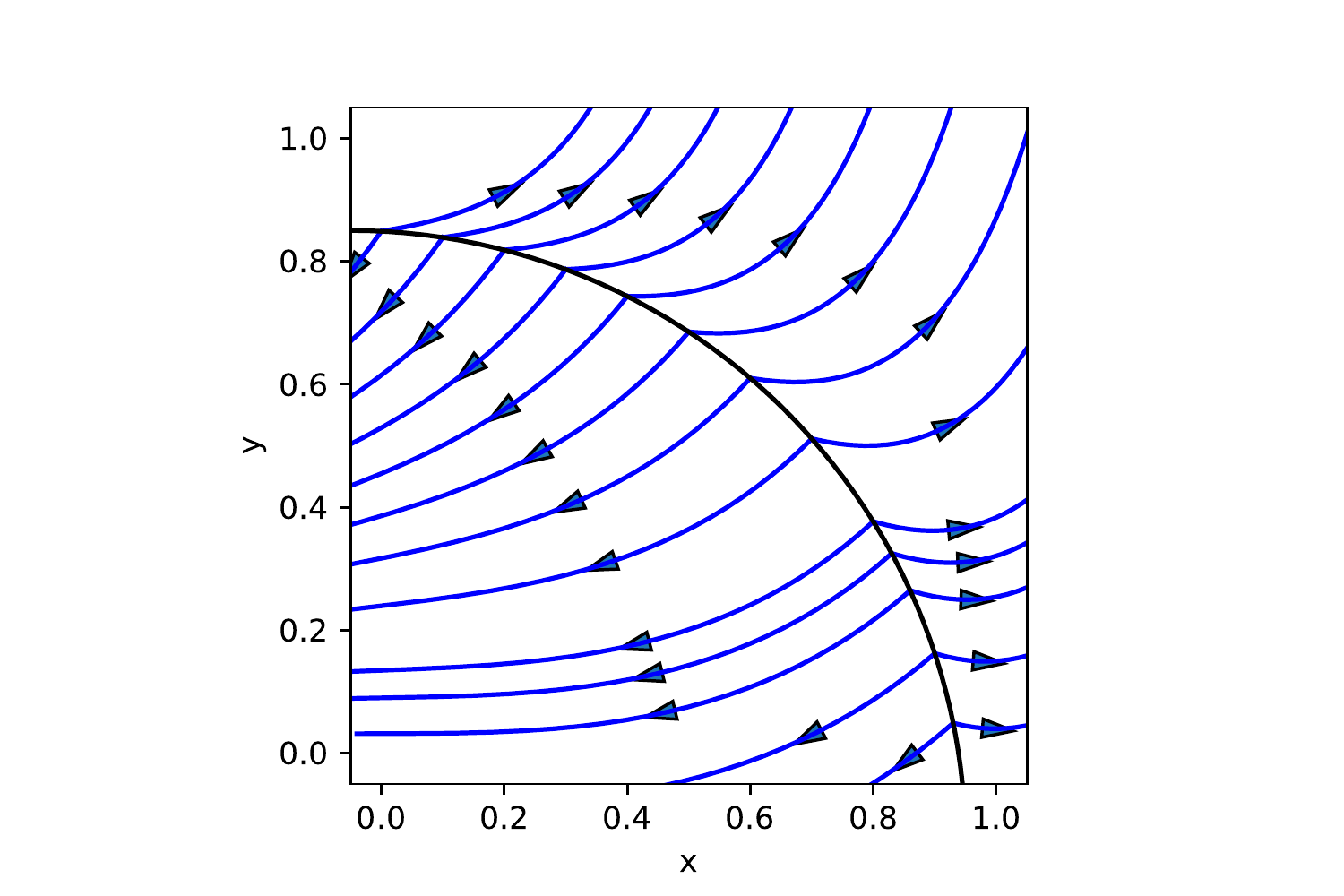}
  \includegraphics[width=1in,trim=1in 0in 1in 0in, clip]
  {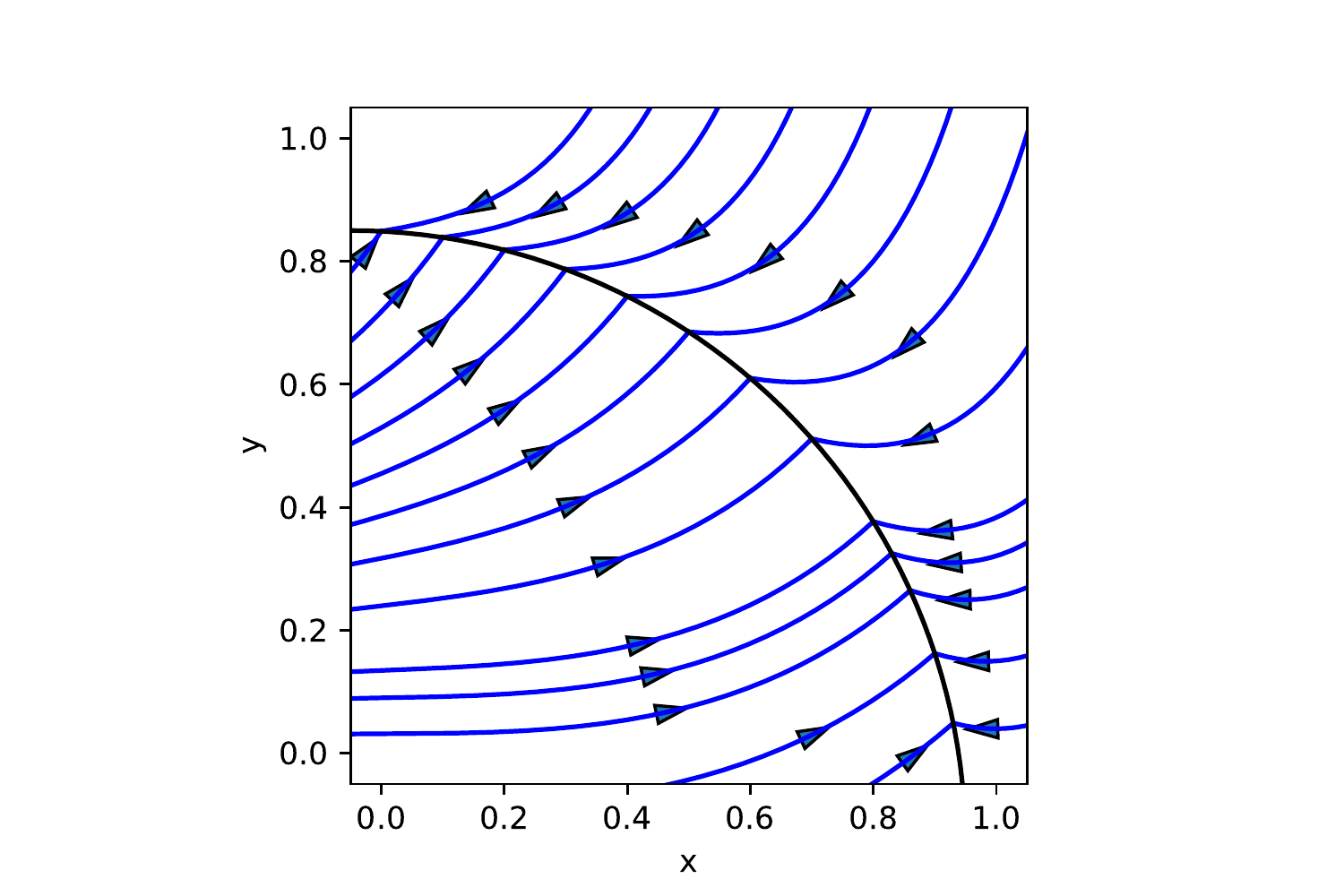}
  \caption{Pictorial classification of piecewise smooth vector fields inspired by Figure~6.2 of~\cite{HaNoWa2008}. PWS dynamics described by~\cref{eq:typeA} with transversality condition~\cref{eq:transverseA} corresponds to dynamics of type shown in the first two figures. These are the piecewise vector fields addressed in this paper. The third figure corresponds to an unstable dynamics on the switching surface and the fourth figure requires a choice in determining sliding dynamics along the switching surface~\cite{Filippov1988}. We do not consider these types of vector fields.}\label{fig:vftypes}
\end{figure}
From Filippov's first order theory \cite{Filippov1988} on PWS systems, if the transversality condition is satisfied, any trajectory $\bm{x}(t)$ of \eqref{eq:typeA} not starting on the interface and reaching $S$ at $t^*$ will exit $S$ after $t^*$ and is unique. Moreover, we wish to exclude the possibility of infinitely many transitions across $S$ over a finite time interval. For this reason and for later proofs, throughout this paper, we assume that the solutions $\bm{x}\in C(I\rightarrow U)$ of \eqref{eq:typeA} satisfy the following hypotheses: 
\begin{enumerate}[label=(H\arabic*)]
\item There exist at most finitely many \emph{transition times} $t^*\in I$ so that $\bm{x}(t^*)\in S$. \label{hyp:H1}
\item \label{hyp:H2} The one-sided limits of $\dot{\bm{x}}$ approaching $S$ exist and are equal to $\bm{f}_\pm$ from either $U_\pm$. Specifically,  there exists a fixed $\epsilon_0>0$ such that for $0<\epsilon<\epsilon_0$,
	\[\lim_{t \downarrow t^{*}} \dot{\bm{x}}(t) &= \bm{f}_\pm (t^*, \bm{x}^*), \text{ if } \bm{x}(t^* + \epsilon) \in U_\pm,\\
	\lim_{t \uparrow t^{*}} \dot{\bm{x}}(t) &= \bm{f}_\pm (t^*, \bm{x}^*), \text{ if } \bm{x}(t^* - \epsilon) \in U_\pm.\] 
\item \label{hyp:H3} Denoting the finitely many transition times $t_i^\ast$ as $a\leq t_1^*<\dots<t_r^*\leq b$ with $[a,b]\subset I$, $\bm{x}\in C^2(I\setminus \{t_1^*,\ldots,t_r^*\}\rightarrow U)$ and $\norm{\ddot{\bm{x}}(t)}_2<\infty$ for $t \in I \setminus \{t_1^*,\ldots,t_r^*\}$.
\end{enumerate}There are other possibilities for introducing non-smoothness in dynamical systems. For example, the vector field can change to a different one based on reaching some particular value of a function of the state variable. Or the state can be instantaneously transported along the switching surface and the trajectory can restart from a new point on the surface. A bouncing ball is an example of this type. We do not address the analysis of these cases in this paper, although numerically, the strategy we adopt for solving this class of problems is the same as the one we do address in our analysis.

\subsection{Numerical strategies for piecewise smooth systems}

There are two main challenges in developing numerical methods for piecewise smooth systems. One challenge is determining accurately when and where the trajectory intersects the interface. A second challenge is to preserve the order of accuracy of the method when the numerical solution crosses the interface. Indeed, as Mannshardt states in his seminal paper~\cite{Mannshardt1978} on the topic: ``... Almost every Runge-Kutta method remains convergent after transition, but only with order 1 ...''.

There are several possibilities for the design of numerical methods for piecewise smooth systems. (An overview of these methods is covered in the review paper \cite{DiLo2012}.) One simple strategy is to test at each step whether the interface has been crossed but ignore the issue of order preservation. A second strategy is to use a variable step-size code with local error control (such as DOPRI5, MATLAB ode45 etc.). Such methods rely on the error estimation built into the integrator to adapt the time step and cross the surface without needing to compute its precise location, as long as there is a test available to decide which side of the surface the integrator is on.  There may be more rejected steps near $S$ and a negative impact on accuracy and efficiency as can be seen in~\cite{HaNoWa2008} in Figure~6.3 and related discussion.

One approach for such systems is to reparameterize time so that the unknown transition time is transformed into a known time. See~\cite{DiLo2015,LoMa2018} for details of such methods. Yet another strategy is the one that Mannshardt~\cite{Mannshardt1978} used and analyzed and this is the one we adopt in this paper. The idea is to arrive at the switching surface $S$ using one right hand side, stop the integrator just prior to arriving at $S$ and then restart using the other right hand side. Our method falls into this class of methods, which are also called \emph{event driven} methods for piecewise smooth systems. The original main results in using this strategy are in~\cite{Mannshardt1978} which showed that the accuracy of an integrator after crossing $S$ is limited by how accurately the arrival point on $S$ is computed. See~\cite{DiLo2012} for a description of the essence of Mannshardt's method. We will refer to Mannshardt's method as a \emph{transition scheme}. Our goal is to extend this approach for order preservation, while preserving conserved quantities.


\subsection{Conserved Quantities and Conservative Integrators}
A conserved quantity is a function that is constant along the integral curves of the ODE and an ODE may possess several conserved quantities. A well-known example is the total energy $\psi(x, v) = \frac{1}{2}(m v^2 + k x^2)$ which is constant along the solutions of the harmonic oscillator $v = \dot{x}, \dot{v} = -\frac{k}{m} x$. In ODEs arising from a variational principle, a common method for obtaining conserved quantities in the presence of symmetries, is by the use of Noether's theorem~\cite{Olver1993}. See also~\cite{BlAn2002} for other techniques for finding conserved quantities. Conserved quantities (also referred to as constants of motion) may also exist for dissipative systems. For example, $\psi(t, x, v) = \frac{1}{2}(m v^2 + \gamma x v + k x^2)\exp({\gamma t/m})$ is constant along the solutions of the damped harmonic oscillator $v = \dot{x}, \dot{v} = -\frac{\gamma}{m}v - \frac{k}{m} x$, see~\cite{WaBiNa2017}. We will be using conservative integrators based on DMM~\cite{WaBiNa2016, WaBiNa2017, WaNa2018}. These integrators preserve conserved quantities up to machine precision.  Moreover, DMM has recently been applied to many body problems~\cite{gormezano2021conservative,wan2021conservative}.


We only consider the PWS system \eqref{eq:typeA} with solutions satisfying \ref{hyp:H1}-\ref{hyp:H3} that have time-independent conserved quantities $\bm{\psi}_\pm\in C^1(U_\pm \cup S \rightarrow  \mathbb{R}^{d_\pm})$, which are linearly independent\footnote{By linear independence, we mean that $\nabla \bm{\psi}_\pm$ have full row rank on $U_\pm$, see Definition 11 of~\cite{WaBiNa2017}}. In general, our numerical approach is well-defined for any number $d_\pm\geq 1$ of conserved quantities. In the particular case when there are $d_\pm=d-1$ conserved quantities, we prove that the accuracy order is preserved upon crossing the interface.

\subsection{Summary}
In this paper, we introduce a novel conservative numerical method for piecewise smooth systems. We prove the preservation of accuracy order and demonstrate preservation of conserved quantities for this method. This paper is organized as follows. In Section \ref{sec:NotationConventionAlgorithm}, we introduce the notations and conventions used throughout the paper. Furthermore, we also introduce the algorithm used to transition past the interface. In Section \ref{subsec:order_preservation}, we present the main result of the paper, Theorem \ref{thm:mainResults}. Specifically, we present several lemmas for estimating the error of transition time leading to the proof of the main result. In Section \ref{sec:numerical}, we present numerical results demonstrating preservation of conserved quantities over long term integration and preservation of accuracy order of our method over many transitions.


\section{Conservative Integrator for PWS systems}\label{sec:NotationConventionAlgorithm}
In this section, after introducing some notations, we will introduce the conservative integrator and discuss various related well-posedness results.
\subsection{Notations}
\label{subsec:notation}
Throughout the rest of the paper, $\tau$ will refer to the maximum over all discrete time step sizes and $\norm{\cdot}$ denotes the $\ell_2$ norm of vectors. For simplicity and convenience, we will assume that the time step size is fixed although all results apply with minimal changes to the variable time step case. Let $T$ denote the final time and define uniformly distributed time steps $t_k = t_0 + k\tau$, $N = (T - t_0) / \tau$, so that $T = t_0 + N\tau$ and $\tau = (T - t_0) / N$. In the expressions where $\tau$ appears as a superscript, it is intended to denote discrete expressions that appear in the numerical method and not to the specific length of the discrete time step. Thus, for example, $\bm{f}_\pm^\tau(t_k, \bm{x}_k, t_{k+1}, \bm{x}_{k+1})$ denotes ``discrete vector fields'' approximating the PWS vector fields $\bm{f}_\pm(t, \bm{x})$. The discrete solution $\bm{x}^\tau(t;\bm{x}_k, t_k)$ denotes the numerical solution at time $t$, starting with the initial condition $\bm{x}_k$ at $t_k$ and this is supposed to approximate the piecewise smooth solution $\bm{x}(t; \bm{x}_k, t_k)$ starting with the same initial conditions. Often when it is clear from the context, we will shorten these to $\bm{x}^\tau(t)$ and $\bm{x}(t)$, respectively. By hypothesis \ref{hyp:H3}, we can assume any solution curve will cross the interface at most finitely many times. We denote an exact transition time and location at which the interface is crossed by starred quantities with $t^*\in I$ and $\bm{x}^*\in U$ and the corresponding discrete transition time $\hat{t}^*\in I$ and point $\hat{\bm{x}}^*\in U$. Specifically,
\begin{align}
    \bm{x}^* &= \bm{x}(t^*; \bm{x}_k, t_k), \label{def:PWIntersect} \\
    \hat{\bm{x}}^* &= \bm{x}^\tau(\hat{t}^*; \bm{x}_k, t_k). \label{def:DiscIntersect}
\end{align}
\subsection{Conservative Transition Scheme}
\label{sec:conservative}
A general conservative method for smooth dynamical systems is one that can preserve any conserved quantities. These are distinct from other geometric methods like symplectic, or more generally variational integrators~\cite{MaWe2001, HaLuWa2006} which are designed to preserve properties like the symplectic form, energy and momentum related to the underlying physical systems. Some examples of general conservative methods are the projection method~\cite{HaLuWa2006}, the discrete gradient method~\cite{McQuRo1998, McQuRo1999} and the DMM~\cite{WaBiNa2017}. It is the DMM that we wish to extend for PWS systems in this paper.

We will consider single-step conservative numerical schemes for the PWS system \eqref{eq:typeA} of the form
\begin{equation}
  \bm{x}_{k+1} = \bm{x}_k + \tau \bm{f}^\tau_\pm(t_k, \bm{x}_k, t_{k+1}, \bm{x}_{k+1}) \; ,\label{eq:twoSchemes}
\end{equation}
such that $\bm{\psi}_\pm(\bm{x}_{k+1}) = \bm{\psi}_\pm(\bm{x}_k)$ where $\bm{\psi}_\pm(\bm{x})$ is constant along the integral curves of the corresponding smooth parts of \eqref{eq:typeA}. \\
We can consider any number $d_\pm$ of conserved quantities $\bm{\psi}_\pm\in C^1(U_\pm\rightarrow \mathbb{R}^{d_\pm})$. In the ideal case, when there are $d_\pm = d-1$ conserved quantities, the shape of the trajectory is completely determined by the conserved quantities and the conservative transition scheme will only induce an error in time. 
Using this geometric property, we will show in Section \ref{subsec:order_preservation} that the resulting conservative scheme can preserve the order of accuracy on crossing the switching surface with a simplified proof than the work of Mannshardt \cite{Mannshardt1978}. \\
In order to preserve the accuracy order, a special treatment for \eqref{eq:twoSchemes} is needed when the discrete solution crosses the interface $S$, which is detected by evaluating ${g}(\bm{x}_k)$ after each discrete time step. If such a crossing is detected at time step $t_{k+1}$, then the method backs up to time step $t_k$ and follows the transition scheme introduced next.\\
Assuming $(t_k,\bm{x}_k) \in I \times U_-$\footnote{Analogous steps can be made if $(t_k,\bm{x}_k)\in I\times U_+$ by interchanging $\bm{f}^\tau_\pm$.}, find $(\hat{t}^*,\hat{\bm{x}}^*)$ by solving for $(t,\bm{x})$ in
\begin{align}
    &\bm{x} = \bm{x}_k + (t - t_k)\;\bm{f}^\tau_-(t_k,
    \bm{x}_k, t, \bm{x})\, ,   \label{eq:to-interface}\\
    &g(\bm{x}) = 0 \label{eq:interface}\,.
\end{align}  The error incurred in the numerical solution $(\hat{t}^\ast, \hat{\bm{x}}^\ast)$ of the system~\eqref{eq:to-interface}-\eqref{eq:interface} plays an important role in the analysis of the order preservation across the interface, detailed in Section~\ref{subsec:order_preservation}.
  \begin{remark}
    Note that we only consider time-independent  $g$ and so the interface is fixed. The transition scheme can still work in principle for the time-dependent case $g(t,\bm{x})$. However, our main result in Section~\ref{subsec:order_preservation} on the preservation of order across the interface is only proved for the time-independent case. Preserving order across a time-dependent interface would require the ability to control the time error between the discrete and continuous trajectories. Unless one can control $\hat{t}^\ast - t^\ast$, the error in the time of intersection with the interface, it is difficult to control $\hat{\bm{x}}^\ast - \bm{x}$, the error in the location of the intersection.
  \end{remark}
Assuming a solution $(\hat{t}^\ast, \hat{\bm{x}}^\ast)$ is found from \eqref{eq:to-interface} and \eqref{eq:interface}, then the transition scheme finishes by solving for
$\bm{x}_{k+1}$ in
\begin{equation}
  \bm{x}_{k+1} = \hat{\bm{x}}^\ast + (t_{k+1} - \hat{t}^\ast)\; \bm{f}^\tau_+(\hat{t}^\ast,
  \hat{\bm{x}}^\ast, t_{k+1}, \bm{x}_{k+1})\, .
  \label{eq:next}
\end{equation}
Together,~\eqref{eq:to-interface}, ~\eqref{eq:interface} and~\eqref{eq:next} can be combined to yield the following transition step across the interface:
\[
  \bm{x}_{k+1} = \bm{x}_k + (\hat{t}^\ast - t_k) \bm{f}^\tau_-(t_k, \bm{x}_k, \hat{t}^\ast,\hat{\bm{x}}^\ast) + (t_{k+1} - \hat{t}^\ast) \bm{f}^\tau_+(\hat{t}^\ast,\hat{\bm{x}}^\ast, t_{k+1},\bm{x}_{k+1})\, .
\]
This can be interpreted as a special time stepping scheme that takes a fractional time step to the interface and then another fractional time step to complete the step from $t_k$ to $t_{k+1}$ which forms a convex combination of the two schemes given by
\[
  \frac{\bm{x}_{k+1}-\bm{x}_k}\tau =
  \frac{\hat{t}^\ast - t_k}\tau \bm{f}^\tau_-(t_k, \bm{x}_k, \hat{t}^\ast,\hat{\bm{x}}^\ast) +
  \frac{t_{k+1}-\hat{t}^\ast}\tau \;
  \bm{f}^\tau_+(\hat{t}^\ast,\hat{\bm{x}}^\ast, t_{k+1},\bm{x}_{k+1})\, .
\]
We summarize Mannshardt's transition scheme as follows:
\begin{algorithm}[h]
\SetAlgoLined
 Given $\tau,t_0,\bm{x}_0$.\\
 \For{$k=0,1,2,\dots$}{
 $t_{k+1} = t_k+\tau$\\
 $s\leftarrow \text{sign}(g(\bm{x}_{k}))$\\
 $\tilde{\bm{x}}_{k+1} \leftarrow$ Solve for $\bm{x}$ in $\boxed{\bm{x}=\bm{x}_k+\tau \bm{f}_{s}^\tau(t_k,\bm{x}_k,t_{k+1},\bm{x})}$\\
 $s'\leftarrow \text{sign}(g(\tilde{\bm{x}}_{k+1}))$\\
  \eIf{$s = s'$}{
    $\bm{x}_{k+1}\leftarrow \tilde{\bm{x}}_{k+1}$
  }{
    $(\hat{t}^*,\bm{\hat{x}}^*)\leftarrow$ Solve for $(t,\bm{x})$ in  $\boxed{\begin{cases}\bm{x}=\bm{x}_k+(t-t_k) \bm{f}_{s}^\tau(t_k,\bm{x}_k,t,\bm{x})\\ g(\bm{x})=0\end{cases}}$\\
    $\bm{x}_{k+1} \leftarrow$ Solve for $\bm{x}$ in $\boxed{\bm{x}=\bm{\hat{x}}^*+(t_{k+1}-\hat{t}^*) \bm{f}_{s'}^\tau(\hat{t}^*,\bm{\hat{x}}^*,t_{k+1},\bm{x})}$
   }
 }
 \caption{Mannshardt's transition scheme (uniform time steps)} \label{alg:trans_scheme}
\end{algorithm}

The main steps in Algorithm~\ref{alg:trans_scheme} are the solution of the system~\eqref{eq:to-interface} and~\eqref{eq:interface} for $(\hat{t}^\ast, \hat{\bm{x}}^\ast)$ followed by computing the update $\bm{x}_{k+1}$ in~\eqref{eq:next}. The existence and uniqueness of these is discussed in detail in Appendix~\ref{app:existence_intersection_points}, the contents of which we briefly summarize here.  Lemma~\ref{lem:suff_cond_transition}  states a sufficient condition for existence of $(t^\ast$, $\bm{x}^\ast)$ the exact transition time and point for the piecewise smooth solution $\bm{x}(t)$. A partial converse of this requires transversality and is the content of Lemma~\ref{lem:part_conv_transition}. Assuming that $\hat{t}^\ast$ exists during a single time step $[t_k, t_{k+1}]$ the remainder of Appendix~\ref{app:existence_intersection_points} is devoted to proving the existence of $(\hat{t}^\ast, \hat{\bm{x}}^\ast)$ and $\bm{x}_{k+1}$. The existence result is in Prop.~\ref{prop:existence_numerical_transition} which needs two technical Lemmas~\ref{lem:wellposed-lemma1} and~\ref{lem:wellposed-lemma2}. These lemmas prove the existence of a certain fixed point (which is equivalent to solving~\eqref{eq:to-interface}) and the Lipschitz continuity of the discrete solution. To the best knowledge of the authors, these results have not appeared previously in the literature, even though the discrete equations \eqref{eq:to-interface}-\eqref{eq:next} go back to the original work of Mannshardt \cite{Mannshardt1978}.

\section{Order preservation upon crossing the interface}
\label{subsec:order_preservation}

\pagebreak
In this section, we will present our main result on the order preservation for Mannshardt's transition scheme with conservative integrators. We first state our main result concerning preservation of order. Following series of lemmas in Sections \ref{sec:quadLemma} and \ref{sec:mainEst}, we then prove the main theorem in Section \ref{sec:mainResults}.

\begin{theorem}[Main Theorem]\label{thm:mainResults}
 Let the piecewise smooth trajectory $\bm{x}(t;\bm{y},s)$ be the solution of~\eqref{eq:typeA},  $\bm{x}^\tau(t;\bm{y},s)$ be the solution of the transition scheme~\eqref{eq:to-interface}-\eqref{eq:next} and
$g\in C^2(\overline{U}\rightarrow \mathbb{R})$ denote the switching function. Assume that the discrete vector fields $\bm{f}_\pm^\tau \in C(I\times (U_\pm\cup S)\times I\times (U_\pm\cup S)\rightarrow \mathbb{R}^d)$ are Lipschitz continuous in their arguments. Moreover, assume the piecewise smooth trajectory $\bm{x}(t;\bm{y},s)$ and the discrete trajectory $\bm{x}^\tau(t;\bm{y},s)$ are Lipschitz continuous with respect to their initial data $\bm{y},s$.
Furthermore, assume that the transition schemes \eqref{eq:to-interface}-\eqref{eq:next} are conservative for $d-1$ time-independent conserved quantities $\bm{\psi}_\pm$ of the piecewise smooth system~\eqref{eq:typeA}, where $\nabla \bm{\psi}_\pm$ have full row rank in $U_\pm \cup S$, respectively. If $\hat{t}^*\in [t_k,t_{k+1}]$, then for sufficiently small $\tau$,
\begin{equation*}
    \norm{\bm{x}(t_{k+1}; \bm{x}_k, t_k) - \bm{x}^\tau(t_{k+1}; \bm{x}_k, t_k)} = \mathcal{O}(\tau^p).
\end{equation*}
\end{theorem}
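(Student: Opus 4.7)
The plan is to exploit the codimension-1 conservation hypothesis: with $d-1$ functionally independent conserved quantities, every level set of $\bm{\psi}_-$ in $U_-\cup S$ is a $C^1$ one-dimensional submanifold, and conservativity forces both the true and discrete trajectories in $U_-$ to lie on the same level curve before the crossing. This lets me bypass the separate spatial error estimate $\|\hat{\bm{x}}^*-\bm{x}^*\|$ that a generic Mannshardt argument would require, reducing matters to a single time-error estimate.

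Concretely, I split the error at $t_{k+1}$ using the flow property and an intermediate insertion:
\begin{align*}
\|\bm{x}(t_{k+1};\bm{x}_k,t_k)-\bm{x}^\tau(t_{k+1};\bm{x}_k,t_k)\|
 &\le \|\bm{x}(t_{k+1};\bm{x}^*,t^*)-\bm{x}(t_{k+1};\hat{\bm{x}}^*,\hat{t}^*)\| \\
 &\quad + \|\bm{x}(t_{k+1};\hat{\bm{x}}^*,\hat{t}^*)-\bm{x}^\tau(t_{k+1};\hat{\bm{x}}^*,\hat{t}^*)\|.
\end{align*}
The second term is one post-transition step of the smooth conservative integrator over an interval of length at most $\tau$, bounded by $\mathcal{O}(\tau^p)$ by the standard consistency and convergence theory of the underlying DMM scheme. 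The first term is controlled via the assumed Lipschitz continuity of $\bm{x}(t_{k+1};\cdot,\cdot)$ in its initial data, so it suffices to bound $\|\hat{\bm{x}}^*-\bm{x}^*\|$ and $|\hat{t}^*-t^*|$.

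For the spatial error at the crossing, the assumption that \eqref{eq:to-interface}--\eqref{eq:next} is conservative yields $\bm{\psi}_-(\hat{\bm{x}}^*)=\bm{\psi}_-(\bm{x}_k)=\bm{\psi}_-(\bm{x}^*)$. Since $\nabla\bm{\psi}_-$ has full row rank $d-1$, the common level set is a $C^1$ one-dimensional curve $L_-$ through $\bm{x}_k$; the transversality condition~\eqref{eq:transverseA} forces $L_-$ to cross $S$ transversally at $\bm{x}^*$, so by the implicit function theorem $L_-\cap S$ coincides locally with the single point $\bm{x}^*$. The existence and Lipschitz-continuity results of Appendix~\ref{app:existence_intersection_points} guarantee $\hat{\bm{x}}^*\to\bm{x}^*$ as $\tau\to 0$, so for $\tau$ sufficiently small $\hat{\bm{x}}^*$ sits in this local neighborhood and must equal $\bm{x}^*$ exactly. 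This is the key geometric step and the main obstacle, since it is a qualitative equality rather than a quantitative estimate and requires a uniform local-uniqueness neighborhood that survives across the finitely many crossings guaranteed by~\ref{hyp:H1}.

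For the time error, I apply the $\mathcal{O}(\tau^p)$ accuracy of the pre-transition smooth scheme on $[t_k,\hat{t}^*]$ together with $g(\bm{x}(t^*;\bm{x}_k,t_k))=0=g(\bm{x}^\tau(\hat{t}^*;\bm{x}_k,t_k))$ and the transversality lower bound $\nabla g\cdot\bm{f}_-\ge\alpha_S^2>0$. Taylor expansion of $g\circ\bm{x}$ about $t^*$, valid because $g\in C^2$ and $\bm{x}\in C^2$ on $U_-$ by~\ref{hyp:H3}, gives $|g(\bm{x}(\hat{t}^*;\bm{x}_k,t_k))|\ge c|\hat{t}^*-t^*|$ for some $c>0$, while the scheme's smooth-region accuracy combined with the Lipschitz continuity of $g$ gives $|g(\bm{x}(\hat{t}^*;\bm{x}_k,t_k))-g(\bm{x}^\tau(\hat{t}^*;\bm{x}_k,t_k))|=\mathcal{O}(\tau^p)$. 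Subtracting and using that the right-hand side vanishes yields $|\hat{t}^*-t^*|=\mathcal{O}(\tau^p)$; I expect the lemmas of Sections~\ref{sec:quadLemma} and~\ref{sec:mainEst} to formalize precisely this transversality-based quadrature argument. Combining the exact spatial coincidence $\hat{\bm{x}}^*=\bm{x}^*$, the $\mathcal{O}(\tau^p)$ time error, Lipschitz continuity of the continuous flow, and the single-step truncation bound on $[\hat{t}^*,t_{k+1}]$ delivers the claimed $\mathcal{O}(\tau^p)$ global error at $t_{k+1}$.
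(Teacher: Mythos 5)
Your overall architecture matches the paper's: reduce to initial data $(\bm{x}^*,t^*)$ versus $(\hat{\bm{x}}^*,\hat{t}^*)$ via the flow property, prove the exact spatial coincidence $\hat{\bm{x}}^*=\bm{x}^*$ from conservativity of the $d-1$ quantities plus the implicit function theorem (this is the paper's Lemma~\ref{lem:geometricLemma}, and your level-curve argument is the same idea, modulo the paper's extra care in establishing local uniqueness of the intersection via the monotonicity lemmas of Appendix~\ref{sec:signAnalysis}), and then reduce everything to the time error $|t^*-\hat{t}^*|$ controlled by transversality. Your treatment of the quadratic remainder ($|g(\bm{x}(t))|\ge \alpha_S^2|t-t^*|-M|t-t^*|^2$, absorbed because $|t-t^*|\le\tau$) is exactly what Lemma~\ref{lem:quadraticIneq} formalizes, as you anticipated.

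There is, however, one genuine gap: your time-error estimate is one-sided and becomes circular in the case $t^*<\hat{t}^*$. You evaluate $g$ along the \emph{continuous} trajectory at $\hat{t}^*$ and need $\norm{\bm{x}(\hat{t}^*;\bm{x}_k,t_k)-\bm{x}^\tau(\hat{t}^*;\bm{x}_k,t_k)}=\mathcal{O}(\tau^p)$ from ``smooth-region accuracy.'' If $\hat{t}^*\le t^*$ this is fine: both trajectories remain in $U_-\cup S$ on $[t_k,\hat{t}^*]$ and the pre-transition convergence applies. But if $t^*<\hat{t}^*$, the continuous trajectory has already crossed into $U_+$ on $(t^*,\hat{t}^*]$ while the discrete one is still being propagated by $\bm{f}_-^\tau$, so the smooth-region bound does not apply; the only available substitute, $\norm{\bm{x}(\hat{t}^*)-\bm{x}^*}\le L_{\bm{x}}|\hat{t}^*-t^*|$, feeds $|\hat{t}^*-t^*|$ back into the right-hand side with an $\mathcal{O}(1)$ constant and yields nothing. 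The paper's remedy is to symmetrize: it proves a \emph{discrete} transversality condition (Lemma~\ref{lem:discTC}) and a discrete analogue of the crossing estimate (Lemma~\ref{lem:discTimeError}), and in the case $t^*<\hat{t}^*$ evaluates $g$ along the \emph{discrete} trajectory at $t=t^*$, where the discrete trajectory has not yet crossed and both trajectories still lie in $U_-\cup S$ on $[t_k,t^*]$. You would need to add this discrete half of the argument (or an equivalent device) for your proof to close in both orderings of $t^*$ and $\hat{t}^*$.
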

We note that our main result is a statement about the local error upon crossing the interface on some time interval $[t_k,t_{k+1}]$. Specifically, assuming we have order $p$ schemes in the smooth regions, then the global error is also of order $p$, provided that the local error is also of order $p$ upon crossing the interface a fixed number of times. Note that we do not need to show that the local error on crossing is order $p+1$ due to the assumption of finitely many crossings on $I$.
\subsection{An elementary lemma on quadratic inequalities}
\label{sec:quadLemma}

We begin with an elementary lemma about nonnegative solutions to a quadratic inequality that we will utilize for our main theorem.

\begin{lemma}\label{lem:quadraticIneq}
    Consider the inequality $0 \leq a x^2 - b x + c$ for $x \in [0,\infty)$, where $a,b$ are fixed positive constants and $c$ as a positive parameter. Then for $c < \frac{b^2}{4a}$, both roots of the quadratic polynomial are real and the nonnegative solutions to the inequality belong to the intervals,
    \begin{equation*}
        x \in \left[0, r_-(a,b,c) \right] \cup \left[ r_+(a,b,c), \infty \right), \text{ where } r_\pm(a,b,c):=\frac{b \pm \sqrt{b^2 - 4ac}}{2a}.
    \end{equation*}
    Furthermore, $\displaystyle r_-(a,b,c) \leq \frac{c}{b} + \frac{c}{b} \sum_{n=1}^\infty \left( \frac{2ac}{b^2} \right)^n = \frac{c}{b} + \mathcal{O}(c^2)$.
\end{lemma}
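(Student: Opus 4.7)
The first claim reduces to standard facts about quadratic polynomials. Since $a>0$, the polynomial $p(x)=ax^2-bx+c$ opens upward, and the hypothesis $c<\frac{b^2}{4a}$ makes the discriminant $b^2-4ac$ strictly positive, so $r_+$ and $r_-$ are real and distinct. I would then observe that the assumption $b,c>0$ together with Vieta's formulas gives $r_+r_-=c/a>0$ and $r_++r_-=b/a>0$, forcing both roots to be positive. Since $p$ is nonnegative exactly on the complement of the open interval $(r_-,r_+)$, intersecting with $[0,\infty)$ yields the stated union $[0,r_-]\cup[r_+,\infty)$.

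For the bound on $r_-$, the plan is to rationalize and then replace the square root with a closed-form upper bound. Multiplying numerator and denominator by the conjugate gives
\[
r_- \;=\; \frac{b-\sqrt{b^2-4ac}}{2a} \;=\; \frac{2c}{b+\sqrt{b^2-4ac}}.
\]
The trivial estimate $\sqrt{b^2-4ac}\le b$ (equivalent to $4ac\ge 0$) then yields, after cross-multiplying and simplifying, the intermediate bound
\[
r_- \;\le\; \frac{bc}{b^2-2ac}.
\]
I would verify this last inequality by algebraically reducing it to $b^2-4ac\le b\sqrt{b^2-4ac}$, which is immediate from $\sqrt{b^2-4ac}\le b$ (and is trivial when $b^2-4ac=0$).

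To finish, I would expand the closed-form bound as a geometric series. Because $c<\frac{b^2}{4a}$ implies $\frac{2ac}{b^2}<\frac{1}{2}<1$, the series converges, and
\[
\frac{bc}{b^2-2ac} \;=\; \frac{c/b}{1-2ac/b^2} \;=\; \frac{c}{b}\sum_{n=0}^{\infty}\left(\frac{2ac}{b^2}\right)^n \;=\; \frac{c}{b}+\frac{c}{b}\sum_{n=1}^{\infty}\left(\frac{2ac}{b^2}\right)^n.
\]
Chaining this identity with the previous bound gives the inequality in the lemma. The $\mathcal{O}(c^2)$ statement then follows from inspecting the $n=1$ term, which equals $\frac{2ac^2}{b^3}$, with all higher-order terms being of order $c^3$ or smaller.

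There is essentially no substantive obstacle here; the only point that requires care is ensuring $\frac{2ac}{b^2}<1$ so that the geometric series is legitimate, and this is exactly what the hypothesis $c<\frac{b^2}{4a}$ provides (with room to spare).
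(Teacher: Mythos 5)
Your proof is correct and follows essentially the same route as the paper's: rationalize $r_-$ to $\frac{2c}{b+\sqrt{b^2-4ac}}$, bound the square root from below (your reduction to $b^2-4ac\le b\sqrt{b^2-4ac}$ is exactly the paper's inequality $\sqrt{1-y}\ge 1-y$ with $y=4ac/b^2$) to reach the same intermediate bound $\frac{bc}{b^2-2ac}$, and then expand that as a geometric series using $\frac{2ac}{b^2}<\frac{1}{2}$. The Vieta's-formula observation that both roots are positive is a small, harmless addition not present in the paper.
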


\begin{proof}
    Since $c < \frac{b^2}{4a}$, there are two real roots $r_\pm$ to the above quadratic polynomial. As $a>0$, the quadratic polynomial is concave up and so the nonnegative solutions to the quadratic inequality belong to $[0, r_-]\cup [r_+, \infty)$. Moreover, note that,
    \begin{align*}
        r_-(a,b,c)&=\frac{b - \sqrt{b^2 - 4ac}}{2a} = \frac{b^2 - (b^2 - 4ac)}{2a \left(b + \sqrt{b^2 - 4ac} \right)} = \frac{2c}{b}\frac{1}{1+\sqrt{1 - \frac{4ac}{b^2}}} \\
            &< \frac{2c}{b} \frac{1}{2 - \frac{4ac}{b^2}} = \frac{c}{b} \frac{1}{1 - \left( \frac{2ac}{b^2} \right)} = \frac{c}{b} + \frac{c}{b} \sum_{n=1}^\infty \left( \frac{2ac}{b^2} \right)^n = \frac{c}{b} + \mathcal{O}(c^2),
    \end{align*}
    where the inequality follows from $\sqrt{1-y} > 1 - y$ for $0 < y < 1$ and the geometric series converges since $c < \frac{b^2}{4a} < \frac{b^2}{2a}$.
\end{proof}

\subsection{Main estimates}
\label{sec:mainEst}

Next we prove two lemmas for estimating $\vert t - t^\ast \vert$ and $\vert t - \hat{t}^\ast\vert$ (which we will call times from crossing) during a single time step in which an interface crossing occurs. Further, we also prove that a \emph{discrete} transversality condition holds, which we utilize to prove the lemma estimating $\vert t - \hat{t}^* \vert$. The discrete transversality condition and both estimates are given in terms of various constants of the problem.

\begin{lemma}\label{lem:ctsTimeError}
    Let $g\in C^2(\overline{U}\rightarrow \mathbb{R})$ and assume the continuous transversality condition \eqref{eq:transverseA} is satisfied.
    Then for all $t\in[t_k,t_{k+1}]$, the following estimate holds:
    \begin{equation} \label{eq:crossing_estimate}
        |t - t^\ast| \leq \frac{M(t-t^\ast)^2 + L_g\norm{\bm{x}(t;\bm{x}_k,t_k) - \bm{x}(t^\ast;\bm{x}_k,t_k)}}{\alpha_S^2},
    \end{equation}
    where $L_g$ is the uniform Lipschitz constant of $g$ and 
    \begin{multline}
    M := \frac{1}{2}\max \left\{\sup_{t\in [t_k,t^\ast)} \left|\dot{\bm{x}}(t)\cdot  H_g(\bm{x}(t))\dot{\bm{x}}(t)+\nabla g(\bm{x}(t))\cdot \ddot{\bm{x}}(t)\right|\right.,\\
    \left.\sup_{t\in (t^\ast,t_{k+1}]} \left|\dot{\bm{x}}(t)\cdot  H_g(\bm{x}(t))\dot{\bm{x}}(t)+\nabla g(\bm{x}(t))\cdot \ddot{\bm{x}}(t)\right|\right\}, \label{eq:M}
    \end{multline} where $H_g$ is the Hessian matrix of $g$.
\end{lemma}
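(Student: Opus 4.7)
The plan is to introduce the auxiliary scalar function $\phi(t) := g(\bm{x}(t))$ on $[t_k, t_{k+1}]$, observe that $\phi(t^\ast) = 0$ since $\bm{x}(t^\ast) \in S$, and then derive the desired bound by a one-sided Taylor expansion of $\phi$ about $t^\ast$. Concretely, I compute $\dot{\phi}(t) = \nabla g(\bm{x}(t)) \cdot \dot{\bm{x}}(t)$ and $\ddot{\phi}(t) = \dot{\bm{x}}(t) \cdot H_g(\bm{x}(t))\dot{\bm{x}}(t) + \nabla g(\bm{x}(t)) \cdot \ddot{\bm{x}}(t)$, which is exactly the expression appearing in the definition of $M$, so $|\ddot{\phi}| \leq 2M$ on $[t_k, t^\ast) \cup (t^\ast, t_{k+1}]$.

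Next, I would invoke hypothesis \ref{hyp:H2} to note that the one-sided limits $\dot{\bm{x}}(t^{\ast\pm})$ exist and equal $\bm{f}_\pm(t^\ast, \bm{x}^\ast)$, so the one-sided derivatives of $\phi$ at $t^\ast$ exist and equal $\nabla g(\bm{x}^\ast) \cdot \bm{f}_\pm(t^\ast, \bm{x}^\ast)$, each of which is bounded below by $\alpha_S^2$ by the transversality condition \eqref{eq:transverseA}. Then I apply Taylor's theorem with Lagrange remainder on each side of $t^\ast$: for $t \in (t^\ast, t_{k+1}]$ there exists $\xi \in (t^\ast, t)$ with
\[
\phi(t) = \dot{\phi}(t^{\ast+})(t - t^\ast) + \tfrac{1}{2}\ddot{\phi}(\xi)(t - t^\ast)^2,
\]
and analogously for $t \in [t_k, t^\ast)$ with the one-sided derivative $\dot{\phi}(t^{\ast-})$.

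Rearranging and using the lower bound $\dot{\phi}(t^{\ast\pm}) \geq \alpha_S^2$ together with $|\ddot{\phi}(\xi)| \leq 2M$ yields
\[
\alpha_S^2 \, |t - t^\ast| \leq |\phi(t)| + M(t - t^\ast)^2,
\]
valid on both sides of $t^\ast$ (for $t < t^\ast$ one multiplies the identity by $-1$ before estimating). Finally, since $\phi(t^\ast) = 0$, the Lipschitz continuity of $g$ gives $|\phi(t)| = |g(\bm{x}(t)) - g(\bm{x}(t^\ast))| \leq L_g \, \|\bm{x}(t; \bm{x}_k, t_k) - \bm{x}(t^\ast; \bm{x}_k, t_k)\|$, and dividing through by $\alpha_S^2$ delivers the claimed estimate \eqref{eq:crossing_estimate}.

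The only subtlety — and the main thing to handle with care — is that $\bm{x}$ is merely piecewise $C^2$ with possible discontinuity of $\dot{\bm{x}}$ across $t^\ast$, so one cannot naively apply Taylor's theorem on the whole interval $[t_k, t_{k+1}]$. The remedy, which is conceptually straightforward but must be stated explicitly, is that hypotheses \ref{hyp:H2} and \ref{hyp:H3} ensure that on each closed half-interval $[t_k, t^\ast]$ and $[t^\ast, t_{k+1}]$, the function $\phi$ extends to a $C^2$ function (using the one-sided limits of $\dot{\bm{x}}$ and the boundedness of $\ddot{\bm{x}}$), which is precisely what permits the one-sided Taylor expansion and the uniform bound $|\ddot{\phi}| \leq 2M$ coming from the max in the definition \eqref{eq:M}.
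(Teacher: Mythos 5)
Your proposal is correct and follows essentially the same route as the paper's proof: a second-order Taylor expansion of $g(\bm{x}(t))$ about $t^\ast$ carried out separately on each side of $t^\ast$ (using \ref{hyp:H2} to identify the one-sided derivative with $\nabla g(\bm{x}^\ast)\cdot\bm{f}_\pm(t^\ast,\bm{x}^\ast)$ and \ref{hyp:H3} to bound the remainder by $M$), followed by the transversality lower bound $\alpha_S^2$ on the linear term and the Lipschitz bound on $|g(\bm{x}(t))-g(\bm{x}^\ast)|$. Your explicit remark about extending $\phi$ to a $C^2$ function on each closed half-interval is exactly the justification the paper uses implicitly when it applies Taylor's remainder theorem with $\xi$ strictly between $t$ and $t^\ast$.
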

\begin{proof}
    Note that the $M$ defined in equation~\eqref{eq:M} exists by Hypothesis~\ref{hyp:H3}.
    Without loss of generality, we can assume $t^\ast\neq t\in [t_k,t_{k+1}]$. Then by applying Taylor's remainder theorem in $t$ for $g(\bm{x}(t))$, there is some $\xi$ strictly between $t,t^*$ satisfying
    \[
        g(\bm{x}(t)) & = g(\bm{x}^\ast) + \nabla{g(\bm{x}^\ast)} \cdot \dot{\bm{x}}(t^\ast)(t-t^\ast)\\ & + \frac{1}{2}\left[\dot{\bm{x}}(\xi)\cdot  H_g(\bm{x}(\xi))\dot{\bm{x}}(\xi)+\nabla g(\bm{x}(\xi))\cdot \ddot{\bm{x}}(\xi)\right](t-t^\ast)^2.
    \]
    Thus, depending on if $t<t^\ast$ or $t>t^\ast$, we can replace $\dot{\bm{x}}$ by $\bm{f}_\pm$ in the second term above, since $\dot{\bm{x}}$ satisfies \eqref{eq:typeA} and \ref{hyp:H2}. Thus, bounding the second derivative terms by $M$ yields
    \begin{equation}\label{lem:ctsTimeErrorTaylorTime}
        \left|g(\bm{x}(t)) - \left(g(\bm{x}^\ast) + \nabla{g(\bm{x}^\ast)}\cdot \bm{f}_\pm(t^\ast,\bm{x}^\ast)(t-t^\ast)\right)\right| \leq M|t-t^\ast|^2.
    \end{equation}
    As $g\in C^2$ on a compact set $\overline{U}$, it is uniformly Lipschitz satisfying,
    \begin{equation}\label{lem:ctsTimeErrorGLipschitz}
        |g(\bm{x}(t)) -g(\bm{x}^\ast)| \leq L_g \norm{\bm{x}(t) - \bm{x}(t^\ast)}.
    \end{equation}
    Combining the two inequalities and the transversality condition \eqref{eq:transverseA} gives the resulting bound:
    \begin{align*}
        |t-t^\ast| &= \frac{|\nabla{g(\bm{x}^\ast)}\cdot \bm{f}_\pm(t^\ast,\bm{x}^\ast) (t-t^\ast)|}{|\nabla{g(\bm{x}^\ast)}\cdot \bm{f}_\pm(t^\ast,\bm{x}^\ast)|} \\
            &\leq \frac{\overbrace{|g(\bm{x}(t)) - (g(\bm{x}^\ast) +  \nabla{g(\bm{x}^\ast)}\cdot \bm{f}_\pm(t^\ast,\bm{x}^\ast) (t-t^\ast))|}^{\leq M|t-t^\ast|^2 \text{ by \eqref{lem:ctsTimeErrorTaylorTime}}} + \overbrace{|g(\bm{x}(t)) - g(\bm{x}^\ast)|}^{\leq L_g\norm{\bm{x}(t) - \bm{x}(t^\ast)}\text{ by \eqref{lem:ctsTimeErrorGLipschitz}}}}{\alpha_S^2}
    \end{align*}
\end{proof}

Before presenting a discrete version of Lemma \ref{lem:ctsTimeError}, we need to first show a result on the \emph{discrete transversality condition} for the discrete vector fields $\bm{f}^\tau_\pm$ in relation to $\nabla g$. The idea is based on the consistency of the discrete vector fields and the continuous transversality condition.

\begin{lemma}
\label{lem:discTC}
Let $g\in C^2(\overline{U}\rightarrow \mathbb{R})$. Suppose the discrete vector fields $\bm{f}^\tau_\pm$ are locally consistent of order $p$ to the respective vector fields $\bm{f}_\pm$ on $I\times (U_\pm\cup S)$; that is for $t\in I, \bm{x} \in U_\pm\cup S$, there exists positive constants $\tau_0, C_1, C_2, C_3$ so that for all $0<\tau\leq \tau_0$ and if $a,b\in B_{C_1\tau_0}(t)$ and $\bm{y}, \bm{z}\in B_{C_2\tau_0^p}(\bm{x})$, then
\begin{equation}\label{eqn:consistency}
\norm{\bm{f}_\pm(t,\bm{x})-\bm{f}^\tau_\pm(a,\bm{y},b,\bm{z})}\leq C_3\tau^p.
\end{equation}
Then for some positive constant $\tau_1$, for $0<\tau\leq \tau_1$, if $t,s\in B_{C_1\tau_1}(t^\ast)$ and $\bm{x},\bm{y},\bm{z}\in B_{C_2\tau_1^p}(\bm{x}^\ast)$, there exist a nonzero constant $\hat{\alpha}_S \in \mathbb{R}$ so that the following discrete transversality condition holds:
    \begin{equation}\label{eq:discrete_transversality}
        0 < \hat{\alpha}_S^2 \leq \nabla g(\bm{x}) \cdot \bm{f}^\tau_\pm(t, \bm{y}, s, \bm{z})
    \end{equation}
\end{lemma}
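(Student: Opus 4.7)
The plan is to reduce discrete transversality to the assumed continuous transversality~\eqref{eq:transverseA} by a triangle-inequality decomposition, exploiting the $C^2$ regularity of $g$ and the local order-$p$ consistency of $\bm{f}^\tau_\pm$. The key choice is to anchor the consistency hypothesis~\eqref{eqn:consistency} at the point $(t^\ast,\bm{x}^\ast)\in I\times(U_\pm\cup S)$ itself, producing constants $\tau_0,C_1,C_2,C_3$ associated to this base point. The goal is then to pick $\tau_1\leq\tau_0$ small enough that, for all $t,s\in B_{C_1\tau_1}(t^\ast)$ and $\bm{x},\bm{y},\bm{z}\in B_{C_2\tau_1^p}(\bm{x}^\ast)$, the choice $\hat{\alpha}_S^2:=\alpha_S^2/2$ satisfies the claimed inequality.

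The basic identity I would write is
\[
\nabla g(\bm{x})\cdot\bm{f}^\tau_\pm(t,\bm{y},s,\bm{z})=\nabla g(\bm{x}^\ast)\cdot\bm{f}_\pm(t^\ast,\bm{x}^\ast)+R_1+R_2,
\]
with $R_1:=[\nabla g(\bm{x})-\nabla g(\bm{x}^\ast)]\cdot\bm{f}^\tau_\pm(t,\bm{y},s,\bm{z})$ and $R_2:=\nabla g(\bm{x}^\ast)\cdot[\bm{f}^\tau_\pm(t,\bm{y},s,\bm{z})-\bm{f}_\pm(t^\ast,\bm{x}^\ast)]$. By the continuous transversality hypothesis, the first term on the right is at least $\alpha_S^2$, so it suffices to show $|R_1|+|R_2|=\mathcal{O}(\tau_1^p)$.

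For $R_2$, applying \eqref{eqn:consistency} at the base point $(t^\ast,\bm{x}^\ast)$ gives $\|\bm{f}^\tau_\pm(t,\bm{y},s,\bm{z})-\bm{f}_\pm(t^\ast,\bm{x}^\ast)\|\leq C_3\tau^p$; combined with a uniform bound $\|\nabla g\|\leq L_g$ on $\overline{U}$ (available since $g\in C^2(\overline{U})$), this yields $|R_2|\leq L_g C_3\tau_1^p$. For $R_1$, the $C^2$ regularity of $g$ on the compact set $\overline{U}$ makes $\nabla g$ Lipschitz with some constant $L_{\nabla g}$, so $\|\nabla g(\bm{x})-\nabla g(\bm{x}^\ast)\|\leq L_{\nabla g}C_2\tau_1^p$; the factor $\bm{f}^\tau_\pm(t,\bm{y},s,\bm{z})$ admits a uniform bound $M_f$ (for instance $\|\bm{f}_\pm(t^\ast,\bm{x}^\ast)\|+C_3\tau_0^p$, using the same consistency estimate together with continuity of $\bm{f}_\pm$ on the compact closure). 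Hence $|R_1|\leq L_{\nabla g}C_2 M_f\tau_1^p$.

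Combining gives $|R_1+R_2|\leq K\tau_1^p$ for a constant $K$ independent of $\tau_1$, so shrinking $\tau_1$ to make $K\tau_1^p\leq\alpha_S^2/2$ produces the discrete transversality estimate with $\hat{\alpha}_S^2=\alpha_S^2/2$. The only real subtlety, rather than any analytic obstacle, is this anchoring of the consistency hypothesis at $(t^\ast,\bm{x}^\ast)$ specifically: only with that base point does \eqref{eqn:consistency} directly bound the distance from $\bm{f}^\tau_\pm(t,\bm{y},s,\bm{z})$ to the \emph{same} vector $\bm{f}_\pm(t^\ast,\bm{x}^\ast)$ appearing on the right-hand side of continuous transversality, avoiding an extra modulus-of-continuity argument for $\bm{f}_\pm$.
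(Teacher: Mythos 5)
Your proposal is correct and follows essentially the same argument as the paper: both anchor the consistency estimate at $(t^\ast,\bm{x}^\ast)$, split $\nabla g(\bm{x})\cdot\bm{f}^\tau_\pm - \nabla g(\bm{x}^\ast)\cdot\bm{f}_\pm(t^\ast,\bm{x}^\ast)$ into a consistency term and a Lipschitz-of-$\nabla g$ term, bound each by $\mathcal{O}(\tau^p)$ using a uniform bound on $\bm{f}^\tau_\pm$, and shrink $\tau_1$. The only (immaterial) difference is that the paper takes $\hat{\alpha}_S^2 = \alpha_S^2 - K\tau_1^p$ rather than your $\alpha_S^2/2$, and bounds $\bm{f}^\tau_\pm$ by its maximum over $I\times\overline{U}$ rather than deriving that bound from consistency.
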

\begin{proof}
As $g\in C^2$ on a compact set $\overline{U}$, $\nabla g$ is uniformly Lipschitz satisfying,
    \begin{equation}\label{eqn:nablaGLipschitz}
        \norm{\nabla g(\bm{x}) -\nabla g(\bm{x}^\ast)} \leq L_{\nabla g} \norm{\bm{x} - \bm{x}^\ast}.
    \end{equation}
By the transversality condition \eqref{eq:transverseA}, consistency \eqref{eqn:consistency} and Lipschitz continuity of $\nabla g$ \eqref{eqn:nablaGLipschitz},
\[
0< \alpha_S^2 &\leq \nabla{g(\bm{x}^\ast)}\cdot \bm{f}_\pm(t^\ast,\bm{x}^\ast)\\
&= \nabla{g(\bm{x}^\ast)}\cdot \left(\bm{f}_\pm(t^\ast,\bm{x}^\ast)-\bm{f}^\tau_\pm(t, \bm{y}, s, \bm{z})\right)+ \left(\nabla{g(\bm{x}^\ast)}-\nabla{g(\bm{x})}\right)\cdot\bm{f}^\tau_\pm(t, \bm{y}, s, \bm{z})\\
&\quad+\nabla{g(\bm{x})}\cdot\bm{f}^\tau_\pm(t, \bm{y}, s, \bm{z})\\
&\leq \norm{\nabla{g(\bm{x}^\ast)}}\norm{\bm{f}_\pm(t^\ast,\bm{x}^\ast)-\bm{f}^\tau_\pm(t, \bm{y}, s, \bm{z})}+\norm{\nabla{g(\bm{x}^\ast)}-\nabla{g(\bm{x})}}\norm{\bm{f}^\tau_\pm(t, \bm{y}, s, \bm{z})}\\
&\quad +\nabla{g(\bm{x})}\cdot\bm{f}^\tau_\pm(t, \bm{y}, s, \bm{z})\\
&\leq C_3\norm{\nabla{g(\bm{x}^\ast)}}\tau^p+C_2 L_{\nabla g}M_1\tau^p+\nabla{g(\bm{x})}\cdot\bm{f}^\tau_\pm(t, \bm{y}, s, \bm{z}),
\]where $\displaystyle M_1:=\max_{t,s\in I, \bm{y},\bm{z}\in \overline{U}} \norm{\bm{f}^\tau_\pm(t, \bm{y}, s, \bm{z})}$.
Thus for sufficiently small $\tau_1>0$, for all $0<\tau\leq \tau_1$,
\[
0<\hat{\alpha}_S^2:=\alpha_S^2-\left(C_3\norm{\nabla{g(\bm{x}^\ast)}}+C_2 L_{\nabla g}M_1\right)\tau_1^p \leq \nabla{g(\bm{x})}\cdot\bm{f}^\tau_\pm(t, \bm{y}, s, \bm{z})
\]
\end{proof}

By analogy with the continuous case, we will refer to the constant $\hat{\alpha}_S$ appearing in the above lemma as the \emph{discrete transversality constant}.

\begin{lemma}\label{lem:discTimeError}
    Let $g \in C^2(\overline{U} \rightarrow \mathbb{R})$. For all $t \in [t_k, t_{k+1}]$, the following discrete analogue of the continuous estimate \eqref{eq:crossing_estimate} holds for $\tau \leq \tau_0$:
    \begin{equation} \label{eq:discTimeError}
        |t - \hat{t}^\ast| \leq \frac{\hat{M}(t - \hat{t}^\ast)^2 + L_g \norm{\bm{x}^\tau(t; \bm{x}_k, t_k) - \bm{x}^\tau(\hat{t}^\ast; \bm{x}_k, t_k)} }{\hat{\alpha}_S^2}
    \end{equation}
    Here $\hat{\alpha}_S$ is the discrete transversality constant and $\tau_0$ is an upper bound, both defined in Lemma~\ref{lem:discTC}. $L_g$ is the Lipschitz constant of $g$, and 
    \begin{equation}\label{eq:hatM}
      \hat{M} := \frac{1}2 \max (\hat{M}_-, \hat{M}_+)\, ,
    \end{equation}
    where
    \begin{align*}
      \hat{M}_- &:= \sup_{\substack{t \in [t_k, \hat{t}^\ast] \\ s \in [0,1]}}
      \bm{f}^\tau_-(t, \bm{x}^\tau(t), \hat{t}^\ast, \hat{\bm{x}}^\ast) \cdot
      H_g(\bm{x}^\tau(t) + s (\hat{\bm{x}}^\ast-\bm{x}^\tau(t) ))
      \bm{f}^\tau_-(t, \bm{x}^\tau(t), \hat{t}^\ast, \hat{\bm{x}}^\ast)\, ,\\
      \hat{M}_+ &:= \sup_{\substack{t \in [\hat{t}^\ast, t_{k+1}] \\ s \in [0,1]}}
      \bm{f}^\tau_+(\hat{t}^\ast, \hat{\bm{x}}^\ast, t, \bm{x}^\tau(t)) \cdot
      H_g(\hat{\bm{x}}^\ast + s (\bm{x}^\tau(t) - \hat{\bm{x}}^\ast))
      \bm{f}^\tau_+(\hat{t}^\ast, \hat{\bm{x}}^\ast, t, \bm{x}^\tau(t))\, .
    \end{align*}
\end{lemma}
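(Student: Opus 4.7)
The plan is to mirror the structure of the proof of Lemma~\ref{lem:ctsTimeError}, but replace the temporal Taylor expansion of $g(\bm{x}(t))$ around $t^\ast$ with a \emph{spatial} Taylor expansion of $g(\bm{x}^\tau(t))$ around $\hat{\bm{x}}^\ast$. The reason is that the discrete trajectory within the interval $[t_k,t_{k+1}]$ does not satisfy a smooth ODE, but instead satisfies the algebraic relation inherited from the transition scheme~\eqref{eq:to-interface} and~\eqref{eq:next}, namely
\[
  \bm{x}^\tau(t) - \hat{\bm{x}}^\ast = (t - \hat{t}^\ast)\,\bm{f}_{-}^\tau(t,\bm{x}^\tau(t),\hat{t}^\ast,\hat{\bm{x}}^\ast)
  \quad\text{for } t\in[t_k,\hat{t}^\ast],
\]
and the analogous identity with $\bm{f}_{+}^\tau(\hat{t}^\ast,\hat{\bm{x}}^\ast,t,\bm{x}^\tau(t))$ for $t\in[\hat{t}^\ast,t_{k+1}]$. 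This identity is what makes a spatial expansion natural, and it also explains the precise arguments appearing in the definitions of $\hat{M}_\pm$.

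First I would fix $t\in[t_k,t_{k+1}]$ with $t\neq\hat{t}^\ast$, note that $g(\hat{\bm{x}}^\ast)=0$ by construction of $\hat{\bm{x}}^\ast$, and apply Taylor's theorem to $g$ along the segment from $\hat{\bm{x}}^\ast$ to $\bm{x}^\tau(t)$ (which is permissible since $g\in C^2(\overline{U}\rightarrow\mathbb{R})$). This yields, for some $s\in[0,1]$,
\[
  g(\bm{x}^\tau(t)) \;=\; \nabla g(\hat{\bm{x}}^\ast)\cdot\bigl(\bm{x}^\tau(t)-\hat{\bm{x}}^\ast\bigr)
  +\tfrac{1}{2}(\bm{x}^\tau(t)-\hat{\bm{x}}^\ast)^T H_g\bigl(\bm{\xi}(s)\bigr)(\bm{x}^\tau(t)-\hat{\bm{x}}^\ast),
\]
with $\bm{\xi}(s)$ on the stated segment. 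Substituting the transition-scheme identity converts the linear term to $(t-\hat{t}^\ast)\nabla g(\hat{\bm{x}}^\ast)\cdot\bm{f}_\pm^\tau$ and the quadratic term into $(t-\hat{t}^\ast)^2 \bm{f}_\pm^\tau\cdot H_g(\bm{\xi})\bm{f}_\pm^\tau$, which by the definitions~\eqref{eq:hatM} of $\hat{M}_\pm$ is bounded in absolute value by $2\hat{M}(t-\hat{t}^\ast)^2$. This gives the discrete analogue of~\eqref{lem:ctsTimeErrorTaylorTime}:
\[
  \bigl|g(\bm{x}^\tau(t))-(t-\hat{t}^\ast)\,\nabla g(\hat{\bm{x}}^\ast)\cdot\bm{f}_\pm^\tau\bigr|\;\leq\;\hat{M}(t-\hat{t}^\ast)^2.
\]

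Second, since $g\in C^2(\overline{U})$ is in particular uniformly Lipschitz with constant $L_g$ and $g(\hat{\bm{x}}^\ast)=0$, I would write $|g(\bm{x}^\tau(t))|=|g(\bm{x}^\tau(t))-g(\hat{\bm{x}}^\ast)|\leq L_g\|\bm{x}^\tau(t;\bm{x}_k,t_k)-\bm{x}^\tau(\hat{t}^\ast;\bm{x}_k,t_k)\|$, giving the discrete analogue of~\eqref{lem:ctsTimeErrorGLipschitz}. Finally, appealing to Lemma~\ref{lem:discTC} to lower bound $\nabla g(\hat{\bm{x}}^\ast)\cdot\bm{f}_\pm^\tau\geq\hat{\alpha}_S^2$, and combining the two bounds exactly as in the last display of the proof of Lemma~\ref{lem:ctsTimeError}, the claimed estimate~\eqref{eq:discTimeError} follows.

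The main obstacle I anticipate is the application of Lemma~\ref{lem:discTC}: that lemma requires all five arguments $(t,s,\bm{x},\bm{y},\bm{z})$ in $\nabla g(\bm{x})\cdot \bm{f}_\pm^\tau(t,\bm{y},s,\bm{z})$ to lie in prescribed balls around $(t^\ast,\bm{x}^\ast)$ of radii $C_1\tau_1$ and $C_2\tau_1^p$. Here one has to verify that $\hat{t}^\ast$, $t\in[t_k,t_{k+1}]$, and $\hat{\bm{x}}^\ast$, $\bm{x}^\tau(t)$ all lie in those balls once $\tau$ is small enough, which uses the existence/Lipschitz results of Proposition~\ref{prop:existence_numerical_transition} and Lemma~\ref{lem:wellposed-lemma2}, together with the fact that $\hat{t}^\ast,t,t_k,t_{k+1}$ are all within $\tau$ of each other and of $t^\ast$. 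This shrinking-neighborhood bookkeeping, and harmonizing the upper bound $\tau_0$ in this lemma with the $\tau_1$ from Lemma~\ref{lem:discTC}, is the only non-routine step; the algebraic manipulations beyond it are direct transcriptions of the continuous proof.
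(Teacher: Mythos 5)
Your proposal is correct and follows essentially the same route as the paper's proof: a spatial Taylor expansion of $g$ about $\hat{\bm{x}}^\ast$, substitution of the transition-scheme identities $\hat{\bm{x}}^\ast-\bm{x}^\tau(t)=(\hat{t}^\ast-t)\bm{f}^\tau_-(\cdots)$ and $\bm{x}^\tau(t)-\hat{\bm{x}}^\ast=(t-\hat{t}^\ast)\bm{f}^\tau_+(\cdots)$ in the two cases, the uniform Lipschitz bound on $g$, and the discrete transversality condition of Lemma~\ref{lem:discTC}. The neighborhood bookkeeping you flag for invoking Lemma~\ref{lem:discTC} is a real (if routine) point that the paper's own proof glosses over, so your attention to it is if anything an improvement, not a deviation.
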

\begin{proof}
    The proof for this inequality is analogous to the proof for the inequality of Lemma \ref{lem:ctsTimeError}. 
    First, since $g$ is $C^2$, for any $\bm{x},\bm{y}\in U$, by Taylor's theorem, there is some $s \in (0,1)$ such that:
    \begin{equation}\label{eq:gSpatialTaylorPoly}
        g(\bm{y}) = g(\bm{x}) + \nabla{g(\bm{x})} \cdot (\bm{y} - \bm{x}) + \frac{1}{2} (\bm{y} - \bm{x}) \cdot H_g(\bm{x} + s(\bm{y} - \bm{x})) (\bm{y} - \bm{x}).
    \end{equation}
    If $t<\hat{t}^\ast$, then by the definition of the transition scheme,
    \begin{equation*}
        \hat{\bm{x}}^\ast - \bm{x}^\tau(t) = (\hat{t}^\ast - t) \bm{f}^\tau_- (t,\bm{x}^\tau(t),\hat{t}^\ast,\hat{\bm{x}}^\ast)
    \end{equation*}
    So letting $\bm{x}:=\hat{\bm{x}}^\ast$ and $\bm{y}:=\bm{x}^\tau(t)$ in \eqref{eq:gSpatialTaylorPoly} gives
    \begin{equation}
    \begin{split}
    &g(\bm{x}^\tau(t))=g(\hat{\bm{x}}^\ast)+\nabla{g(\hat{\bm{x}}^\ast)} \cdot \bm{f}^\tau_-(t, \bm{x}^\tau(t), \hat{t}^\ast, \hat{\bm{x}}^\ast)(t-\hat{t}^\ast)\\&\quad+\underbrace{\frac{1}{2} \bm{f}^\tau_-(t, \bm{x}^\tau(t), \hat{t}^\ast, \hat{\bm{x}}^\ast) \cdot H_g(\bm{x}^\tau(t) + s (\hat{\bm{x}}^\ast-\bm{x}^\tau(t)))\bm{f}^\tau_-(t, \bm{x}^\tau(t), \hat{t}^\ast, \hat{\bm{x}}^\ast)}_{\leq \hat{M}}(t-\hat{t}^\ast)^2.
    \end{split}\label{eq:discG-}
    \end{equation}
    So by the discrete transversality condition \eqref{eq:discrete_transversality} 
    \[
    &\hat{\alpha}_S^2(\hat{t}^\ast-t)\leq \nabla{g(\hat{\bm{x}}^\ast)} \cdot \bm{f}^\tau_-(t, \bm{x}^\tau(t), \hat{t}^\ast, \hat{\bm{x}}^\ast)(\hat{t}^\ast-t)\\
    &\quad= g(\hat{\bm{x}}^\ast)-g(\bm{x}^\tau(t))\\
    &\quad\quad+\frac{1}{2} \bm{f}^\tau_-(t, \bm{x}^\tau(t), \hat{t}^\ast, \hat{\bm{x}}^\ast) \cdot H_g(\bm{x}^\tau(t) + s (\hat{\bm{x}}^\ast-\bm{x}^\tau(t)))\bm{f}^\tau_-(t, \bm{x}^\tau(t), \hat{t}^\ast, \hat{\bm{x}}^\ast)(t-\hat{t}^\ast)^2\\
    &\quad\leq L_g\norm{\hat{\bm{x}}^\ast-\bm{x}^\tau(t)}+\hat{M}(t-\hat{t}^\ast)^2,
    \] which implies the first part of the inequality upon dividing by $\hat{\alpha}_S^2$. Similarly, if $\hat{t}^\ast<t$, then again by the definition of the transition scheme,
    \begin{equation*}
        \bm{x}^\tau(t)-\hat{\bm{x}}^\ast = (t-\hat{t}^\ast) \bm{f}^\tau_+ (\hat{t}^\ast,\hat{\bm{x}}^\ast,t,\bm{x}^\tau(t))
    \end{equation*}
    So letting $\bm{x}:=\hat{\bm{x}}^\ast$ and $\bm{y}:=\bm{x}^\tau(t)$ in \eqref{eq:gSpatialTaylorPoly} gives
    \begin{equation}
    \begin{split}
    &g(\bm{x}^\tau(t))=g(\hat{\bm{x}}^\ast)+\nabla{g(\hat{\bm{x}}^\ast)} \cdot \bm{f}^\tau_+(t, \bm{x}^\tau(t), \hat{t}^\ast, \hat{\bm{x}}^\ast)(t-\hat{t}^\ast)\\&\quad+\underbrace{\frac{1}{2} \bm{f}^\tau_+(\hat{t}^\ast, \hat{\bm{x}}^\ast, t, \bm{x}^\tau(t)) \cdot H_g(\hat{\bm{x}}^\ast + s (\bm{x}^\tau(t) - \hat{\bm{x}}^\ast)) \bm{f}^\tau_+(\hat{t}^\ast, \hat{\bm{x}}^\ast, t, \bm{x}^\tau(t))}_{\leq \hat{M}}(t-\hat{t}^\ast)^2.
    \end{split}\label{eq:discGX-}
    \end{equation}
    So by the discrete transversality condition \eqref{eq:discrete_transversality}
    \[
    &\hat{\alpha}_S^2(t-\hat{t}^\ast)\leq \nabla{g(\hat{\bm{x}}^\ast)} \cdot \bm{f}^\tau_+(t, \bm{x}^\tau(t), \hat{t}^\ast, \hat{\bm{x}}^\ast)(t-\hat{t}^\ast)\\
    &\quad= g(\bm{x}^\tau(t))-g(\hat{\bm{x}}^\ast)\\
    &\quad\quad-\frac{1}{2} \bm{f}^\tau_+(\hat{t}^\ast, \hat{\bm{x}}^\ast, t, \bm{x}^\tau(t)) \cdot H_g(\hat{\bm{x}}^\ast + s (\bm{x}^\tau(t) - \hat{\bm{x}}^\ast)) \bm{f}^\tau_+(\hat{t}^\ast, \hat{\bm{x}}^\ast, t, \bm{x}^\tau(t))(t-\hat{t}^\ast)^2\\
    &\quad\leq L_g\norm{\bm{x}^\tau(t)-\hat{\bm{x}}^\ast}+\hat{M}(t-\hat{t}^\ast)^2,
    \] which implies the second part of inequality upon dividing by $\hat{\alpha}_S^2$. Combining the two cases gives the desired result.
\end{proof}


    

Lastly, we need one final lemma to prove the main theorem. Specifically, we prove a result relating the spatial transition points of the piecewise smooth and discrete trajectories.

\begin{lemma} \label{lem:geometricLemma}
    Let $\bm{f}^\tau_-$ be the conservative discretization of~\eqref{eq:to-interface} which preserves $d-1$ time-independent conserved quantities $\bm{\psi}_-:U_- \cup S \subset \mathbb{R}^d \rightarrow \mathbb{R}^{d-1}$. Recall from \eqref{def:PWIntersect}, $\bm{x}^* = \bm{x}(t^*; \bm{x}_k, t_k)$, and from \eqref{def:DiscIntersect}, $\hat{\bm{x}}^* = \bm{x}^\tau(\hat{t}^*; \bm{x}_k, t_k)$, which satisfies $g(\bm{x}^*) = 0 = g(\hat{\bm{x}}^*)$. Then for sufficiently small $\tau$, 
    \begin{equation} \label{eq:x^*Approx}
        \bm{x}^* = \hat{\bm{x}}^*.
    \end{equation}
\end{lemma}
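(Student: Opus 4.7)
The plan is to apply the inverse function theorem (IFT) to a suitable map combining the conserved quantities and the switching function. Define $F: U_- \cup S \to \mathbb{R}^d$ by
\[
F(\bm{x}) := \begin{pmatrix} \bm{\psi}_-(\bm{x}) - \bm{\psi}_-(\bm{x}_k) \\ g(\bm{x}) \end{pmatrix}.
\]
First I would verify that both $\bm{x}^*$ and $\hat{\bm{x}}^*$ are zeros of $F$. The interface conditions give $g(\bm{x}^*) = g(\hat{\bm{x}}^*) = 0$ by hypothesis. Since $\bm{\psi}_-$ is conserved along the continuous flow in $U_-$, $\bm{\psi}_-(\bm{x}^*) = \bm{\psi}_-(\bm{x}_k)$; since the discrete scheme is conservative by hypothesis, $\bm{\psi}_-(\hat{\bm{x}}^*) = \bm{\psi}_-(\bm{x}_k)$ as well. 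Hence $F(\bm{x}^*) = F(\hat{\bm{x}}^*) = \bm{0}$.

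The heart of the argument is showing that $DF(\bm{x}^*)$ is invertible. The Jacobian is the $d \times d$ matrix whose top $d-1$ rows form $\nabla \bm{\psi}_-(\bm{x}^*)$ and whose last row is $\nabla g(\bm{x}^*)$. By hypothesis, $\nabla \bm{\psi}_-$ has full row rank $d-1$ on $U_- \cup S$, so the top block already has rank $d-1$. To upgrade to full rank $d$, I need $\nabla g(\bm{x}^*)$ to be linearly independent from the rows of $\nabla \bm{\psi}_-(\bm{x}^*)$. This is precisely where conservation meets transversality: differentiating the conservation law $\bm{\psi}_-(\bm{x}(t)) = \text{const}$ along the continuous flow gives $\nabla \bm{\psi}_-(\bm{x}^*)\, \bm{f}_-(t^*, \bm{x}^*) = \bm{0}$, so $\bm{f}_-(t^*, \bm{x}^*)$ is orthogonal to every row of $\nabla \bm{\psi}_-(\bm{x}^*)$. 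The transversality condition \eqref{eq:transverseA} gives $\nabla g(\bm{x}^*) \cdot \bm{f}_-(t^*, \bm{x}^*) \geq \alpha_S^2 > 0$, so $\nabla g(\bm{x}^*)$ has nonzero inner product with a vector that is orthogonal to every row of $\nabla \bm{\psi}_-(\bm{x}^*)$, and therefore cannot itself lie in the row span. Hence $DF(\bm{x}^*)$ has rank $d$.

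With $DF(\bm{x}^*)$ invertible, IFT supplies an open neighborhood $V$ of $\bm{x}^*$ on which $F$ is a diffeomorphism, hence injective. It then suffices to check that $\hat{\bm{x}}^* \in V$ for all sufficiently small $\tau$. This follows from a simple closeness estimate: the continuous solution satisfies $\|\bm{x}^* - \bm{x}_k\| \leq L\,\tau$ by Lipschitz continuity of $\bm{x}(t)$, and the discrete relation $\hat{\bm{x}}^* - \bm{x}_k = (\hat{t}^* - t_k)\, \bm{f}^\tau_-(t_k, \bm{x}_k, \hat{t}^*, \hat{\bm{x}}^*)$ with $\hat{t}^* - t_k \leq \tau$ and $\bm{f}^\tau_-$ uniformly bounded gives $\|\hat{\bm{x}}^* - \bm{x}_k\| = \mathcal{O}(\tau)$. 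Thus $\|\hat{\bm{x}}^* - \bm{x}^*\| = \mathcal{O}(\tau)$, and for $\tau$ small enough $\hat{\bm{x}}^* \in V$. Combining injectivity of $F$ on $V$ with $F(\bm{x}^*) = F(\hat{\bm{x}}^*) = \bm{0}$ yields $\bm{x}^* = \hat{\bm{x}}^*$.

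The main obstacle is the rank computation in the second paragraph, which requires carefully combining the algebraic consequence of conservation ($\bm{f}_-$ annihilates rows of $\nabla\bm{\psi}_-$) with the geometric transversality condition ($\nabla g$ is not annihilated by $\bm{f}_-$); the final neighborhood argument is essentially a step-size estimate and is comparatively routine. This also explains the codimension-one assumption $d_- = d-1$: exactly one more equation ($g=0$) is needed to complement the $d-1$ conservation constraints in order to pin down a unique point.
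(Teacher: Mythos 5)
Your proof is correct, but it takes a genuinely different route from the paper's. The paper applies the implicit function theorem to $\bm{\psi}_-$ alone: since conservation forces $\bm{\psi}_-(\bm{x}^*) = \bm{\psi}_-(\bm{x}_k) = \bm{\psi}_-(\hat{\bm{x}}^*)$, both crossing points lie on the same one-dimensional level set, locally parametrized by the last coordinate $x_d$, and uniqueness of the crossing point on that curve segment is then extracted from the sign-analysis lemmas of Appendix~\ref{sec:signAnalysis} (Lemmas~\ref{lem:ctsMonotonicTime2} and~\ref{lem:discMonotonicTime2}), which show that both the smooth and the discrete trajectories stay strictly in $U_-$ until they reach $S$. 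You instead augment the conserved quantities with the switching function, forming $F = (\bm{\psi}_- - \bm{\psi}_-(\bm{x}_k),\, g)$, and prove local injectivity of $F$ near $\bm{x}^*$; transversality enters purely as linear algebra ($\bm{f}_-$ annihilates every row of $\nabla\bm{\psi}_-$ by differentiating the conservation law, but $\nabla g \cdot \bm{f}_- \geq \alpha_S^2 > 0$, so $\nabla g$ cannot lie in the row span), rather than through the monotonicity lemmas. Your route is more self-contained --- it bypasses Appendix~\ref{sec:signAnalysis} entirely for this lemma --- and it makes transparent why $d-1$ is exactly the right number of conserved quantities (one more equation, $g=0$, completes the square system); the paper's route buys an explicit geometric picture (both trajectories ride the same curve toward the interface) and reuses machinery that the main theorem needs anyway. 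One caveat applies equally to both arguments: the injectivity neighborhood $V$ (the paper's set $B$) must have a size that does not shrink with $\tau$, which strictly speaking requires a quantitative inverse/implicit function theorem exploiting the uniform transversality constant $\alpha_S^2$, the uniform lower bound on the singular values of $\nabla\bm{\psi}_-$ on a compact neighborhood, and the continuity of the derivatives; you gloss over this in essentially the same way the paper does (``Note that $B$ is independent of $\tau$''), so it is not a gap relative to the paper's own standard.
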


\begin{proof}
    The main idea of the proof is to use the implicit function theorem to show that both trajectories $\bm{x}(\cdot)$ and ${\bm{x}^\tau}(\cdot)$ have a common intersection point, $\bm{x}^* = \hat{\bm{x}}^*$, on the interface $g=0$. 
    
    By the property of the conservative scheme and definition conserved quantities, $\bm{\psi}_- \left( \hat{\bm{x}}^* \right) = \bm{\psi}_- \left( \bm{x}_k \right) = \bm{\psi}_- \left( \bm{x}^* \right)$. Since $\nabla \bm{\psi}_-$ has full row rank in $U_- \cup S$, we can apply the implicit function theorem on $\bm{\psi}_-$. 
    
    Without loss of generality, assume that the first $d-1$ columns of $\nabla \bm{\psi}_-$ are linearly independent on a set $B \subset U_- \cup S$ containing $\bm{x}^*$. Note that $B$ is independent of $\tau$. Furthermore, for small enough $\tau$, the trajectories connecting $\bm{x}_k$ with $\bm{x}^*$ and $\hat{\bm{x}}^*$ are contained in $B$. This follows from Lipschitz continuity of the piecewise smooth and discrete solutions:
    \begin{align*}
        \norm{\bm{x}^* - \bm{x}_k} &= \norm{\bm{x}(t^*; \bm{x}_k, t_k) - \bm{x}(t_k; \bm{x}_k, t_k)} \leq L_{\bm{x}} \lvert t^* - t_k \rvert \leq L_{\bm{x}} \tau \\
        \norm{\hat{\bm{x}}^* - \bm{x}_k} &= \norm{\bm{x}^\tau(\hat{t}^*; \bm{x}_k, t_k) - \bm{x}^\tau(t_k; \bm{x}_k, t_k)} \leq L_{\bm{x}^\tau} \lvert \hat{t}^* - t_k \rvert \leq L_{\bm{x}^\tau} \tau 
    \end{align*}
    Thus, we can apply the implicit function theorem to $\bm{\psi}_-$; there exists subsets $(c, x^*_d] \subset \mathbb{R}$ and $(\hat{c}, \hat{x}^*_d] \subset \mathbb{R}$, such that the first $d-1$ coordinates can be written as functions $h_i : (c, x^*_d] \rightarrow \mathbb{R}$ of $x_d$ and $\hat{h}_i : (\hat{c}, \hat{x}^*_d] \rightarrow \mathbb{R}$ of $x_d^\tau$, for $x_d \in (c, x^*_d]$ and $x_d^\tau \in (\hat{c}, \hat{x}^*_d]$, respectively: 
     
    \[
        \bm{x}(t) = \bm{h}(x_d(t)) := \begin{pmatrix}
            h_1(x_d(t)) \\
            \vdots \\
            h_{d-1}(x_d(t)) \\
            x_d(t)
        \end{pmatrix}, \; \bm{x}^\tau(t) = \hat{\bm{h}}(x_d^\tau(t)) := \begin{pmatrix}
            \hat{h}_1(x_d^\tau(t)) \\
            \vdots \\
            \hat{h}_{d-1}(x_d^\tau(t)) \\
            x_d^\tau(t)
        \end{pmatrix}.
    \]
    Let $(\bm{x}_k)_d$ denote the $d^{th}$ component of $\bm{x}_k$. Assume for the moment the following:
    \begin{align}
        x_d^* \text{ is the unique point in } &((\bm{x}_k)_d, x_d^*] \subset (c, x_d^*] \text{ such that } g(\bm{h}(x_d^*)) = 0. \label{lem:uniqueIntersectPWS} \\
        \hat{x}_d^* \text{ is the unique point in } &((\bm{x}_k)_d, \hat{x}_d^*] \subset (\hat{c}, \hat{x}_d^*] \text{ such that } g(\hat{\bm{h}}(\hat{x}_d^*)) = 0. \label{lem:uniqueIntersectDisc}
    \end{align}
    Then we claim that $x_d^* = \hat{x}_d^*$. This follows by contradiction. If $x^*_d > \hat{x}^*_d$, then by uniqueness in $((\bm{x}_k)_d, x_d^*]$ we have that $x_d^* = \hat{x}_d^*$. Similarly, if $x^*_d < \hat{x}^*_d$, then by uniqueness in $((\bm{x}_k)_d, \hat{x}_d^*]$ we have that $x_d^* = \hat{x}_d^*$.
    
    It remains to show \eqref{lem:uniqueIntersectPWS} and \eqref{lem:uniqueIntersectDisc}. First note that $x^*_d$ and $\hat{x}^*_d$ satisfy $g(\bm{h}(x^*_d)) = 0$ and $g(\hat{\bm{h}}(\hat{x}^*_d)) = 0$, respectively. Thus, there is at least one stated point in each respective interval. Lemmas~\ref{lem:ctsMonotonicTime2} and \ref{lem:discMonotonicTime2} from Section~\ref{sec:signAnalysis} shows that for small enough $\tau$, both the piecewise smooth and discrete trajectories stay on one side of the interface. Specifically, for small enough $\tau$, $g(\bm{h}(x_d)) < 0$ for $x_d \in ((\bm{x}_k)_d, x_d^*)$. Similarly, $g(\hat{\bm{h}}(x_d^\tau)) < 0$ for  $x_d^\tau \in ((\bm{x}_k)_d, \hat{x}_d^*)$. Thus, we have shown \eqref{lem:uniqueIntersectPWS} and \eqref{lem:uniqueIntersectDisc}. Thus, $\bm{x}^* = \hat{\bm{x}}^*$.
\end{proof}

\subsection{Proof of Theorem \ref{thm:mainResults}}
\label{sec:mainResults}
We are now in position to prove our main result: 
\[
\norm{\bm{x}(t_{k+1}; \bm{x}_k, t_k) - 
\bm{x}^\tau(t_{k+1}; \bm{x}_k, t_k)} = \mathcal{O}(\tau^p)\,
\]
We note that since $\bm{x}(t^\ast;\bm{x}_{k}, t_{k}) = \bm{x}^\ast$, we can change the starting point of the piecewise smooth trajectory to be at $(\bm{x}^\ast, t^\ast)$, and so $\bm{x}(t_{k+1}; \bm{x}_k, t_k) = \bm{x}(t_{k+1}; \bm{x}^*, t^*)$. Furthermore, due to the inexactness of solving the nonlinear equations~\eqref{eq:to-interface} and~\eqref{eq:interface}, we denote $(\hat{\bm{x}}, \hat{t})$ as a numerical approximation to the roots $(\hat{\bm{x}}^\ast, \hat{t}^\ast)$. As a consequence, $\bm{x}^\tau(\hat{t}; \bm{x}_k, t_k) = \hat{\bm{x}}$ by Algorithm~\ref{alg:trans_scheme} and we can change the starting point of the discrete trajectory to be at $(\hat{\bm{x}}, \hat{t})$. Thus, $\bm{x}^\tau(t_{k+1}; \bm{x}_k, t_k) = \bm{x}^\tau(t_{k+1}; \hat{\bm{x}}, \hat{t})$ and we can rewrite
\begin{equation}
\norm{\bm{x}(t_{k+1}; \bm{x}_k, t_k) - \bm{x}^\tau(t_{k+1}; \bm{x}_k, t_k)} = \norm{\bm{x}(t_{k+1}; \bm{x}^\ast, t^\ast) - \bm{x}^\tau(t_{k+1}; \hat{\bm{x}}, \hat{t})}\, . \label{eq:baseEquation}
\end{equation}
Furthermore, we point out an important geometric aspect of our error analysis. By our hypothesis of Theorem \ref{thm:mainResults}, the conservative schemes on each part of the phase space preserves the $d-1$ conserved quantities. Thus, by Lemma~\ref{lem:geometricLemma}
\begin{equation}
    \norm{\bm{x}^\ast - \hat{\bm{x}}^\ast} < \epsilon.
\end{equation}
Now to prove the main result, we separate into three cases: $t^*=\hat{t}^*$, $t^*>\hat{t}^*$ and $t^*<\hat{t}^*$. In each of these cases, we need statements regarding the monotonicity of $g(\bm{x}(t))$ and $g(\bm{x}^\tau(t))$ near $t^*$ and $\hat{t}^*$ for sufficiently small $t$. Lemmas~\ref{lem:ctsMonotonicTime} through \ref{lem:discMonotonicTime2} from Appendix~\ref{sec:signAnalysis} provide these statements and their proofs. We now proceed to prove each of the three cases.


\subsubsection*{Case $t^\ast = \hat{t}^\ast$} 
By equation~\eqref{eq:baseEquation}, we have by triangle inequality 
\begin{align*}
    \norm{\bm{x}(t_{k+1}; \bm{x}^*, t^*) - \bm{x}^\tau(t_{k+1}; \hat{\bm{x}}, \hat{t})} &\leq \underbrace{\norm{\bm{x}(t_{k+1}; \bm{x}^*, t^*) - \bm{x}^\tau(t_{k+1}; \bm{x}^*, t^*)}}_{\text{(A.1)}} \\
        &\qquad+ \underbrace{\norm{\bm{x}^\tau(t_{k+1}; \bm{x}^*, t^*) - \bm{x}^\tau(t_{k+1}; \hat{\bm{x}}, \hat{t})}}_{\text{(A.2)}}
\end{align*}
We first deal with term (A.1). For sufficiently small $\tau$, $g(\bm{x}(t_{k+1}; \bm{x}^\ast, t^\ast)) > 0$ by Lemma~\ref{lem:ctsMonotonicTime}, which implies that $\bm{x}(t_{k+1}; \bm{x}^*, t^*)$ lies in $U_+$. Similarly, for sufficiently small $\tau$, $g(\bm{x}^\tau(t_{k+1}; \bm{x}^\ast, t^\ast))>0$ by Lemma~\ref{lem:discMonotonicTime}, which implies that $\bm{x}^\tau(t_{k+1}; \bm{x}^\ast, t^\ast)$ lies in $U_+$. Thus, the term (A.1) is $\mathcal{O}(\tau^p)$ by the convergence order of the scheme in the smooth region $U_+$. For term (A.2), Lipschitz continuity of initial data with Lipschitz constant $\hat{L}$ implies that
\begin{align*}
    \norm{\bm{x}^\tau(t_{k+1}; \bm{x}^*, t^*) - \bm{x}^\tau(t_{k+1}; \hat{\bm{x}}, \hat{t})} &\leq \hat{L} \left( \norm{\bm{x}^* - \hat{\bm{x}}} + |t^* - \hat{t}| \right).
\end{align*}
Because $\bm{x}^* = \hat{\bm{x}}^*$ and $t^* = \hat{t}^*$ by Lemma~\ref{lem:geometricLemma}, both of these terms are machine precision. Thus, the main result in the case that $t^* = \hat{t}^*$ is proven.

\subsubsection*{Case $t^\ast > \hat{t}^\ast$} 
Again, by equation~\eqref{eq:baseEquation}, we have by triangle inequality
\begin{align*}
    \norm{\bm{x}(t_{k+1}; \bm{x}^*, t^*) - \bm{x}^\tau(t_{k+1}; \hat{\bm{x}}, \hat{t})} &\leq \underbrace{\norm{\bm{x}(t_{k+1}; \bm{x}^\ast, t^\ast) - \bm{x}^\tau(t_{k+1}; \bm{x}^\ast, t^\ast)}}_{\text{(B.1)}} \\
     &\hspace{3mm}+ \underbrace{\norm{\bm{x}^\tau(t_{k+1}; \bm{x}^\ast, t^\ast) - \bm{x}^\tau(t_{k+1}; \hat{\bm{x}}^\ast, \hat{t}^\ast)}}_{\text{(B.2)}} \\
     &\hspace{3mm}+ \underbrace{\norm{\bm{x}^\tau(t_{k+1}; \hat{\bm{x}}^\ast, \hat{t}^\ast) - \bm{x}^\tau(t_{k+1}; \hat{\bm{x}}, \hat{t})}}_{\text{(B.3)}}\, .
\end{align*}
We now show each of these terms is $\mathcal{O}(\tau^p)$, which would prove the result. For term (B.1), $\bm{x}(t_{k+1}; \bm{x}^\ast, t^\ast)$ lies in $U_+$, since $g(\bm{x}(t_{k+1}; \bm{x}^\ast, t^\ast)) > 0$  for sufficiently small $\tau$, by Lemma~\ref{lem:ctsMonotonicTime}. Moreover, $\bm{x}^\tau(t_{k+1}; \bm{x}^\ast, t^\ast)$ lies in $U_+$, since $g(\bm{x}^\tau(t_{k+1}; \bm{x}^\ast, t^\ast))>0$ for sufficiently small $\tau$, by Lemma~\ref{lem:discMonotonicTime}. The discrete and piecewise smooth solutions used in this term start on $S$ and lie entirely in $U_+$ except at the start and so we can conclude that term (B.1) above is $\mathcal{O}(\tau^p)$.

For term (B.2), since the discrete solution is assumed uniformly Lipschitz in the initial data, with Lipschitz constant $\hat{L}$,
\begin{equation*}
        \norm{\bm{x}^\tau(t_{k+1}; \bm{x}^\ast, t^\ast) - \bm{x}^\tau(t_{k+1}; \hat{\bm{x}}^\ast, \hat{t}^\ast)} \leq \hat{L}( \norm{\bm{x}^\ast - \hat{\bm{x}}^\ast} + |t^\ast - \hat{t}^\ast|)\, .
\end{equation*}
By equation~\eqref{eq:x^*Approx}, this difference reduces to estimating the time error $|t^\ast - \hat{t}^\ast|$ which is discussed below. 

For term (B.3), we use the same Lipschitz continuity argument as in (B.2) and the fact that $(\hat{\bm{x}}, \hat{t})$ can be made to converge to $(\hat{\bm{x}}^\ast, \hat{t}^\ast)$ to an arbitrary precision $\epsilon$.

Now, it remains to show that $\lvert t^\ast - \hat{t}^\ast \rvert = \mathcal{O}(\tau^p)$. To do so, we use Lemma~\ref{lem:ctsTimeError}, with $t=\hat{t}^\ast$
\[
    |\hat{t}^\ast - t^\ast| \leq \frac{M(\hat{t}^\ast-t^\ast)^2 + L_g\norm{\bm{x}(\hat{t}^\ast;\bm{x}_k,t_k) - \bm{x}(t^\ast;\bm{x}_k,t_k)}}{\alpha_S^2},
\]
where $L_g$ is the uniform Lipschitz constant of $g$ and $M$ is as defined in the lemma. Equivalently,
\begin{equation}
    0 \leq M(\hat{t}^\ast-t^\ast)^2 - \alpha_S^2 |\hat{t}^\ast - t^\ast| + L_g\norm{\bm{x}(\hat{t}^\ast;\bm{x}_k,t_k) - \bm{x}(t^\ast;\bm{x}_k,t_k)}\, . \label{eq:t^*>hatT^*}
\end{equation}
To estimate the norm term in \eqref{eq:t^*>hatT^*} note that
\begin{align*}
    \norm{\bm{x}(t^\ast;\bm{x}_k, t_k) - \bm{x}(\hat{t}^\ast;\bm{x}_k, t_k)} &\leq \underbrace{\norm{\bm{x}(\hat{t}^\ast;\bm{x}_k, t_k) - \bm{x}^\tau(\hat{t}^\ast;\bm{x}_k, t_k)}}_{\text{(B.4)}} \\
 &\hspace{1em} + \underbrace{\norm{\bm{x}^\tau(\hat{t}^\ast;\bm{x}_k, t_k) - \bm{x}(t^\ast;\bm{x}_k, t_k)}}_{\text{(B.5)}}\, .
\end{align*}
For term (B.4), by Lemma~\ref{lem:ctsMonotonicTime2} with $t=\hat{t}^\ast$ and for sufficiently small $\tau$, $g(\bm{x}(\hat{t}^\ast)) < 0$ since $t^\ast < \hat{t}^\ast$. So $\bm{x}(\hat{t}^\ast) \in U_-$. Moreover, since $g(\hat{\bm{x}}^\ast) = 0$, $\hat{\bm{x}}^\ast \in S$. Thus, both the piecewise smooth and discrete trajectories lie in $U_- \cup S$, which implies that the term (B.4) is $\mathcal{O}(\tau^p)$. For term (B.5) note that since $\bm{x}^\tau(\hat{t}^\ast) = \hat{\bm{x}}^\ast$, and $\bm{x}(t^\ast) = \bm{x}^\ast$, term (B.5) can be made as small as desired, for example less than $\epsilon$. Thus equation~\eqref{eq:t^*>hatT^*} reduces to
\begin{equation*}
  0 \leq M |t^\ast - \hat{t}^\ast|^2 - \alpha_S^2 |t^\ast - \hat{t}^\ast| + L_g (C \tau^p + \epsilon)\, ,
\end{equation*}
where $C$ is a constant implicit in $\mathcal{O}(\tau^p)$. Then since $\epsilon$ can be made as small as desired, we can absorb it into the $\tau^p$ term. Note also that $|t^\ast - \hat{t}^\ast| \leq \tau$, since both times are contained in the interval $[t_k, t_{k+1}]$. Thus, we can apply Lemma~\ref{lem:quadraticIneq} and ensure that for sufficiently small $\tau$, $|t^\ast - \hat{t}^\ast|$ is in the lower interval of Lemma~\ref{lem:quadraticIneq}. This implies that $|t^\ast - \hat{t}^\ast|$ is $\mathcal{O}(\tau^p)$.

\subsubsection*{Case $t^\ast < \hat{t}^\ast$}
The analysis plays out similarly to the previous case. Once again by equation~\eqref{eq:baseEquation}, we have by triangle inequality 
\begin{align*}    
    \norm{\bm{x}(t_{k+1}; \bm{x}^*, t^*) - \bm{x}^\tau(t_{k+1}; \hat{\bm{x}}, \hat{t})} &\leq \underbrace{\norm{\bm{x}(t_{k+1}; \hat{\bm{x}}^\ast, \hat{t}^\ast) - \bm{x}^\tau(t_{k+1}; \hat{\bm{x}}^\ast, \hat{t}^\ast)}}_{\text{(C.1)}} \\
     &\hspace{3mm}+ \underbrace{\norm{\bm{x}(t_{k+1}; \bm{x}^\ast, t^\ast) - \bm{x}(t_{k+1}; \hat{\bm{x}}^\ast, \hat{t}^\ast)}}_{\text{(C.2)}} \\
     &\hspace{3mm}+ \underbrace{\norm{\bm{x}^\tau(t_{k+1}; \hat{\bm{x}}^\ast, \hat{t}^\ast) - \bm{x}^\tau(t_{k+1}; \hat{\bm{x}}, \hat{t})}}_{\text{(C.3)}}\, .
\end{align*}
We once again show that each of these terms is $\mathcal{O}(\tau^p)$.

The analysis for the terms (C.1), (C.2) and (C.3) are the same as in previous case, with slight modifications. Since $\norm{\bm{x}^\ast - \hat{\bm{x}}^\ast} < \epsilon$ and $|t^\ast - \hat{t}^\ast| < \tau$, then for sufficiently small $\tau$, we have that $\bm{x}(t_{k+1}; \hat{\bm{x}}^\ast, \hat{t}^\ast) \in U_+$ and $\bm{x}^\tau(t_{k+1}; \hat{\bm{x}}^\ast, \hat{t}^\ast) \in U_+$, since $g(\bm{x}(t_{k+1}; \hat{\bm{x}}^\ast, \hat{t}^\ast)) > 0$ and $g(\bm{x}^\tau(t_{k+1}; \hat{\bm{x}}^\ast; \hat{t}^\ast)) > 0$ by Lemma~\ref{lem:ctsMonotonicTime} and Lemma~\ref{lem:discMonotonicTime}, respectively. For term (C.2), we previously assumed that the discrete solution was Lipschitz in the initial data, and now we assume that piecewise smooth solution is Lipschitz in the initial data. Note that we still require that the discrete solution is Lipschitz in the initial data for term (C.3). 

Now, it remains to show that $\lvert t^\ast - \hat{t}^\ast \rvert = \mathcal{O}(\tau^p)$. To do so, we use Lemma~\ref{lem:discTimeError} with $t=t^\ast$ and for sufficiently small $\tau$. Thus~\eqref{eq:discTimeError} yields
\begin{equation}
    0 \leq \hat{M}(t^\ast - \hat{t}^\ast)^2 - \hat{\alpha_S}^2 |t^\ast - \hat{t}^\ast| + L_g\norm{\bm{x}^\tau(t^\ast;\bm{x}_k,t_k) - \bm{x}^\tau(\hat{t}^\ast;\bm{x}_k,t_k)}\, . \label{eq:t^*<hatT^*}
\end{equation}
We estimate the norm term in \eqref{eq:t^*<hatT^*} as
\begin{align*}
    \norm{\bm{x}^\tau(t^\ast;\bm{x}_k, t_k) - \bm{x}^\tau(\hat{t}^\ast;\bm{x}_k, t_k)} &\leq \underbrace{\norm{\bm{x}^\tau(t^\ast;\bm{x}_k, t_k) - \bm{x}(t^\ast;\bm{x}_k, t_k)}}_{\text{(C.4)}} \\
     &+ \underbrace{\norm{\bm{x}(t^\ast;\bm{x}_k, t_k) - \bm{x}^\tau(\hat{t}^\ast;\bm{x}_k, t_k)}}_{\text{(C.5)}}\, .
\end{align*}
For term (C.4), by Lemma~\ref{lem:discMonotonicTime2} with $t=t^\ast$ and sufficiently small $\tau$, since $t^\ast < \hat{t}^\ast$, $g(\bm{x}^\tau(t^\ast)) < 0$, and so $\bm{x}^\tau(t^\ast) \in U_-$. Since $g(\bm{x}^\ast) = 0$, $\bm{x}^\ast \in S$ and thus both the discrete and piecewise smooth trajectories lie in $U_-\cup S$ which implies that the term (C.4) is $\mathcal{O}(\tau^p)$. For term (C.5), $\bm{x}^\tau(\hat{t}^\ast) = \hat{\bm{x}}^\ast$, and $\bm{x}(t^\ast) = \bm{x}^\ast$ and so this term can be made smaller than any $\epsilon >0$.
Thus, equation~\eqref{eq:t^*>hatT^*} with $t=t^\ast$ reduces to
\begin{equation*}
  0 \leq \hat{M} |t^\ast - \hat{t}^\ast|^2 - \hat{\alpha}_S^2 |t^\ast - \hat{t}^\ast| + L_g(C \tau^p + \epsilon)\, .
\end{equation*}
As in the previous case, because $\epsilon$ can be made arbitrarily small, we can absorb it in the $\tau^p$ term. Just as before, $|t^\ast - \hat{t}^\ast| \leq \tau$, since both times are contained in the interval $[t_k, t_{k+1}]$. Thus, we can apply Lemma~\ref{lem:quadraticIneq} and ensure that for sufficiently small $\tau$, $|t^\ast - \hat{t}^\ast|$ is in the lower interval of Lemma~\ref{lem:quadraticIneq}. This implies that $|t^\ast - \hat{t}^\ast|$ is $\mathcal{O}(\tau^p)$, completing the proof.

\section{Numerical results}
\label{sec:numerical}

We computed the error before and after crossing the interface for two systems - the undamped harmonic oscillator and an elliptic curve system, both with a discontinuity in a parameter. Some trajectories are shown in Figure~\ref{fig:portraits}. We examined the effect of adding error artificially to the computed time of intersection. (The solution trajectory shape is known up to machine precision, the only error is in time.) 
\begin{figure}[h!]
  \centering
  \includegraphics[width=.45\linewidth]{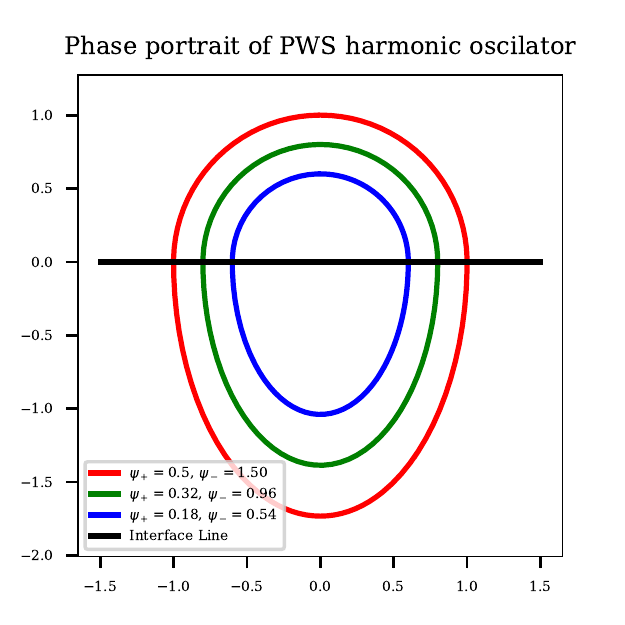}
  \includegraphics[width=0.45\linewidth]{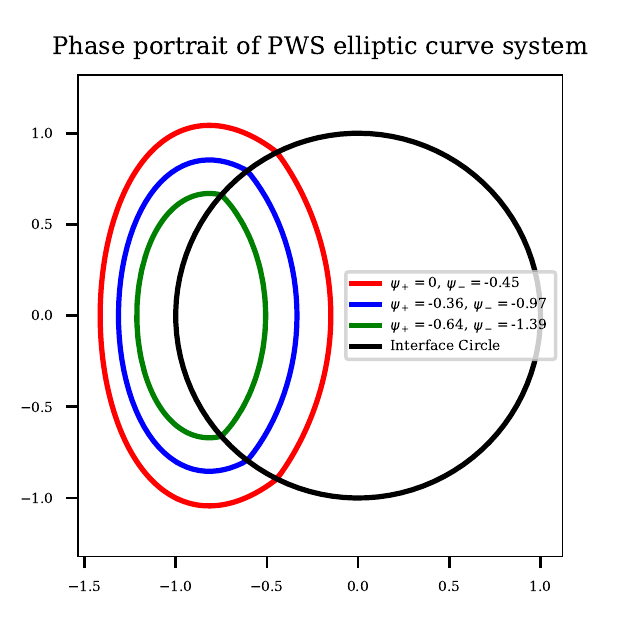}
  \caption{Left plot shows trajectories of a harmonic oscillator with different spring constant values across the interface $y=0$. Right plot shows trajectories for the elliptic curve example with the interface being a circle of radius 1 centered at origin with different parameter values inside and outside the circle.}
  \label{fig:portraits}
\end{figure}

This allows for a demonstration of Theorem~\ref{thm:mainResults}. If the error introduced into the intersection time value is of an order larger than the global truncation error of the method then the order of the method deteriorates after crossing the interface and if the error is smaller then it does not, as we have proved.

\subsection{Harmonic oscillator system}\label{subsec:harmonic}
Consider the undamped simple harmonic oscillator: $\dot{x} = y$ and $\dot{y} = -\omega^2 x$ where $\omega > 0$ is the natural frequency of the oscillator. Recall that there is a conserved quantity called the energy, $\psi_\pm = (\omega_\pm^2 x^2 + y^2)/2$. We will make this system piecewise smooth by using two different values for $\omega$ in two regions of the phase space. Specifically, in the notation introduced in 
Section~\ref{subsec:background}, $U = \mathbb{R}^2$, $g(x,y) = y$ and the switching surface $S$ is the horizontal line $y=0$. Thus $U_{\pm} = \{(x,y) \in \mathbb{R}^2 \; \vert\; y \gtrless 0 \}$ and the PWS vector field consists of $\bm{f}_-(t, x, y) = (y, -3x)$ and $\bm{f}_+(t,x, y) = (y, -x)$.

The DMM integrator used for the harmonic oscillator is the implicit midpoint method, which has a global truncation error of $\mathcal{O}(\tau^2)$.

In the proof of the main theorem, an important step was showing that $\vert t^\ast - \hat{t}^\ast \vert$ is $\mathcal{O}(\tau^p)$. This was shown using the quadratic inequality of Lemma~\ref{lem:quadraticIneq}. To demonstrate this numerically, since we don't have access to the exact crossing time $\hat{t}^\ast$ of the discrete trajectory, we use $\hat{t}$ as a proxy ($\hat{t}$ is the numerical approximation of $\hat{t}^\ast$ obtained by solving the nonlinear equation~\eqref{eq:to-interface} and can be made as close to $\hat{t}^\ast$ as the nonlinear solver allows.) The crossing time $t^\ast$ of the trajectory of the PWS system is known from the analytical solution. Figure~\ref{fig:harmonic_timeError} illustrates this important $\mathcal{O}(\tau^p)$ result where $p=2$ in this example. Notice that the $\vert t^\ast - \hat{t} \vert$ remains $\mathcal{O}(\tau^2)$ after 10, 20 and 30 transitions even though there is accumulation of error with increasing transitions.
\begin{figure}[h!]
  \centering
    \includegraphics{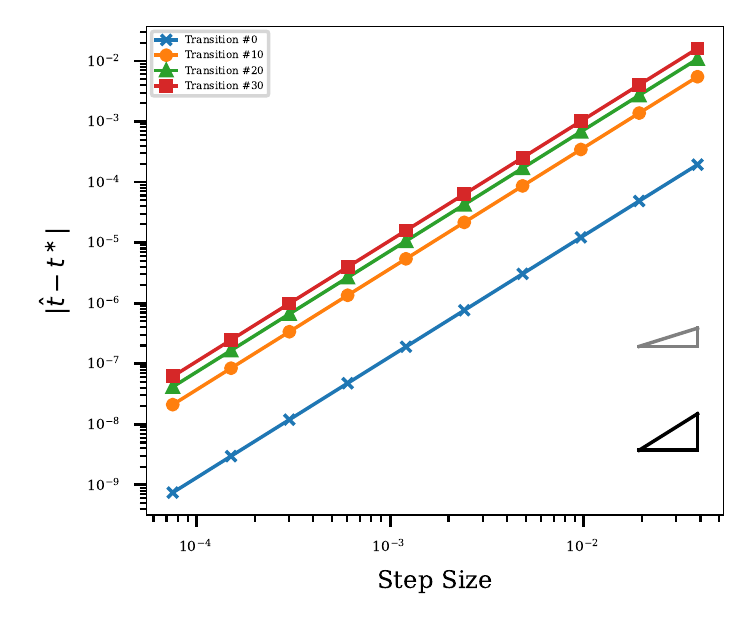}
  \label{fig:harmonic_timeError}
  \caption{The crossing time error $\vert t^\ast-\hat{t} \vert$ between the PWS and discrete trajectory for harmonic oscillator. Here the numerical approximation $\hat{t}$ is used as a proxy for $\hat{t}^\ast$ since the two can be made as close as desired. As proved in Section~\ref{sec:mainResults} the order of the time error is $\mathcal{O}(\tau^p)$. Here $p=2$. }
\end{figure}

\begin{figure}[h!]
    \centering
    \includegraphics{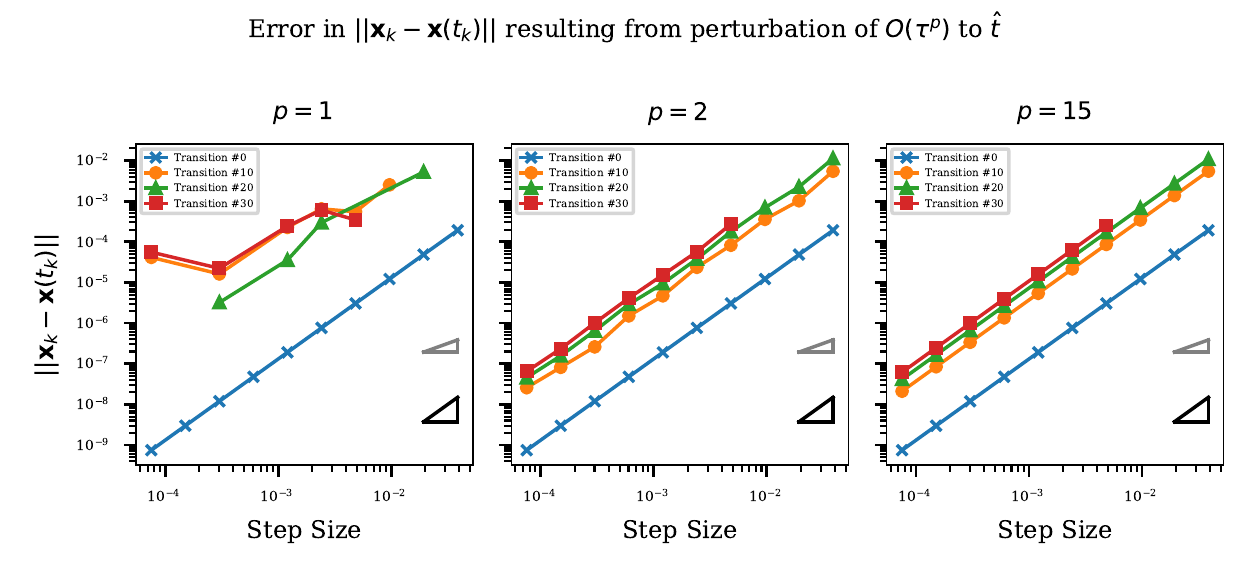}
    \caption{Error in the solution for the PWS harmonic oscillator before and after crossing interface multiple times. A perturbation of $\mathcal{O}(\tau^p)$ is artificially added to $\hat{t}$, the time that is computed to be the interface crossing time. In the first row $p=1, 2$ and $15$ were used (the last one being effectively no error).}
    \label{fig:harmonic_transition}
\end{figure}

\begin{figure}[h!]
    \centering
    \includegraphics{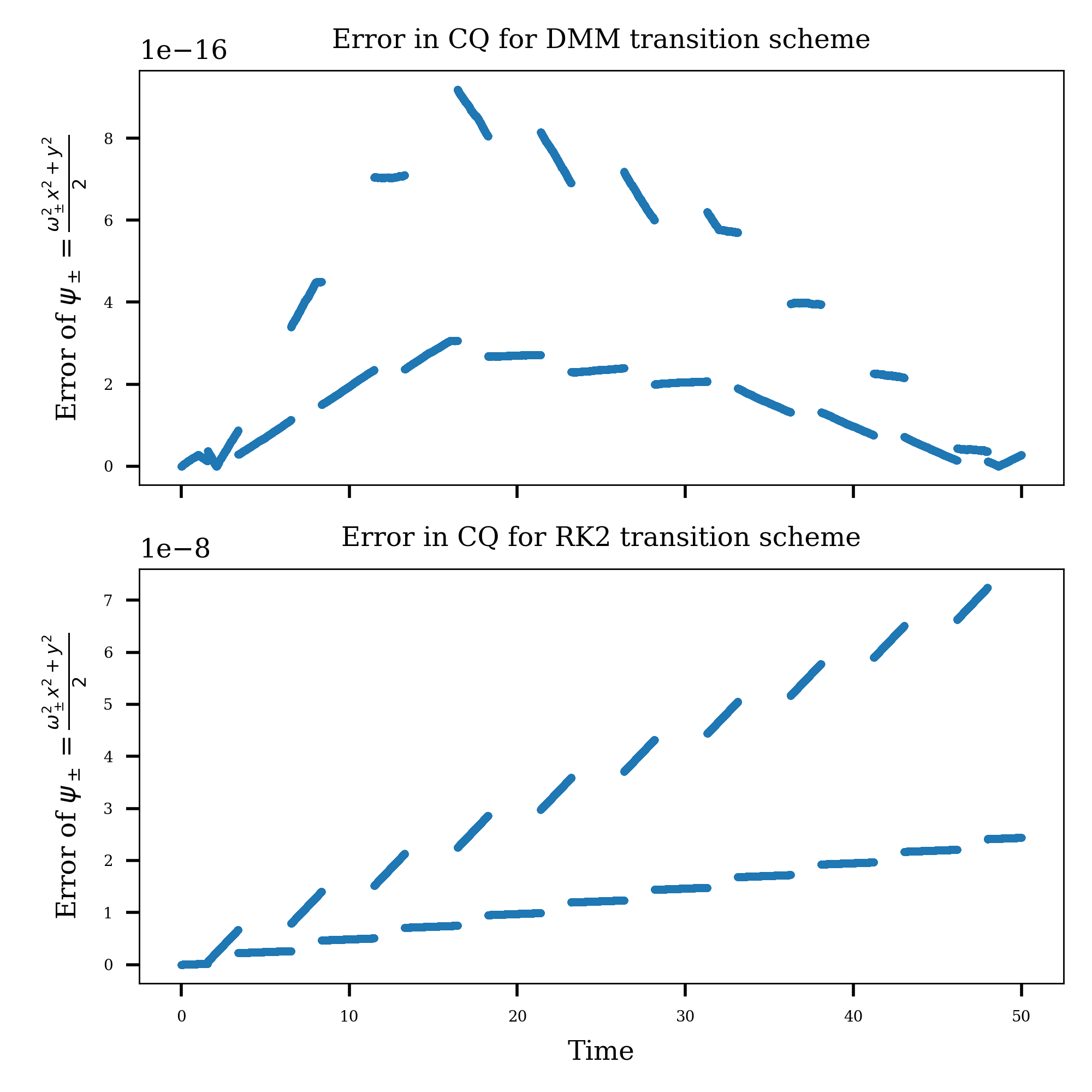}
    \caption{Comparison of our DMM transition scheme with the explicit midpoint method (which is equivalent to a version of RK2) in computing conserved quantity (total energy) of a PWS harmonic oscillator. A switching surface of $y=0$ was used, with $\omega^2=1$ for $y>0$ and $\omega^2=3$ for $y<0$. The error in energy as computed by the DMM method (which reduces to the midpoint method in this case) is in the top plot and as computed by RK2 is in the bottom plot. An initial condition of $[1,1]$ and the same time step size were used for both methods.}
    \label{fig:CQ_harmonic}
\end{figure}

Figure~\ref{fig:harmonic_transition} shows the behavior of error in the PWS system before and after crossing the interface when a perturbation is introduced into the computation of $\hat{t}$. The size of the perturbation added is of order $\mathcal{O}(\tau^p)$ for $p=1,2, 15$. While the $p=1$ perturbation is larger than the truncation error of the method, the $p=2$ perturbation is of the same order and the $p=15$ perturbation introduces effectively no error to $\hat{t}$. The last plot in Figure~\ref{fig:harmonic_transition} confirms that when no perturbation is added to $\hat{t}$, the order of accuracy before and after the transition stays the same, even after the interface has been crossed 30 times. In contrast, the first plot in Figure~\ref{fig:harmonic_transition} shows that if intersection time is not computed accurately enough, the accuracy degrades to $\mathcal{O}(\tau)$ as predicted by Mannshardt~\cite{Mannshardt1978}.  

Figure~\ref{fig:CQ_harmonic} shows the ability of our DMM transition scheme to accurately reproduce the conserved quantities on the two sides of the interface even after multiple transitions. As expected, the error in the energy is close to machine precision for our method, whereas the RK2 integration shows an error of about $10^{-8}$ and a drift in the energy over time. A time step size of $10^{-3}$ was used for both integrators.

\subsection{Elliptic curve system}\label{subsec:elliptic}
Our second example is the elliptic curve example from~\cite{WaNa2018}, which is $\dot{x} = 2y$ and $\dot{y} = 3x^2 + a_\pm$ with the difference in the parameter $a_\pm$ introducing the discontinuity in the system. The conserved quantities for this system are $\psi_\pm = y^2 - x^3 - a_\pm x$, which defines an elliptic curve motivating the name of the system. We chose the phase space to be divided into two regions by a circular interface of radius 1 centered at the origin.  The parameter value inside the circle is $a_-=-3$ and outside is $a_+=-2$. Thus $U = \mathbb{R}^2$, $g(x,y) = x^2 + y^2 - 1$ and the switching surface $S$ is the circle $g(x,y) = x^2 + y^2 - 1 = 0$. The two parts of the phase space are $U_{\pm} = \{(x,y) \in \mathbb{R}^2 \; \vert\; x^2 + y^2 \gtrless 1 \}$ and the PWS vector field consists of $\bm{f}_-(t, x, y) = (2y, 3x^2-3)$ and $\bm{f}_+(t,x, y) = (2y, 3x^2-2)$.  

 \begin{figure}[h!]
  \centering
    \includegraphics{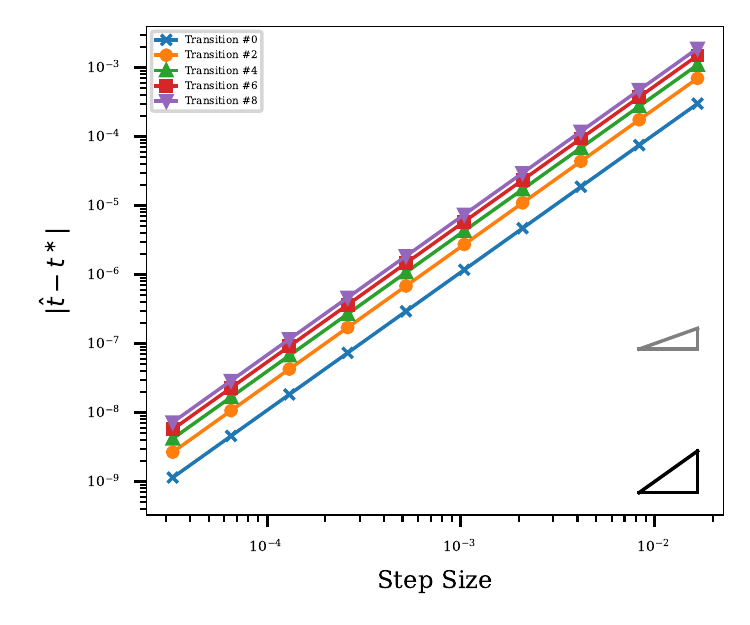}
  \caption{The crossing time error $\vert t^\ast-\hat{t} \vert$ between the PWS and discrete trajectory for elliptic curve system. Because RK4 is being used as a reference solution, the value of $\hat{t}$ found by RK4 is used as a substitute for $t^\ast$.}
  \label{fig:elliptic_time}
\end{figure}

\begin{figure}[h!]
  \centering
  \includegraphics{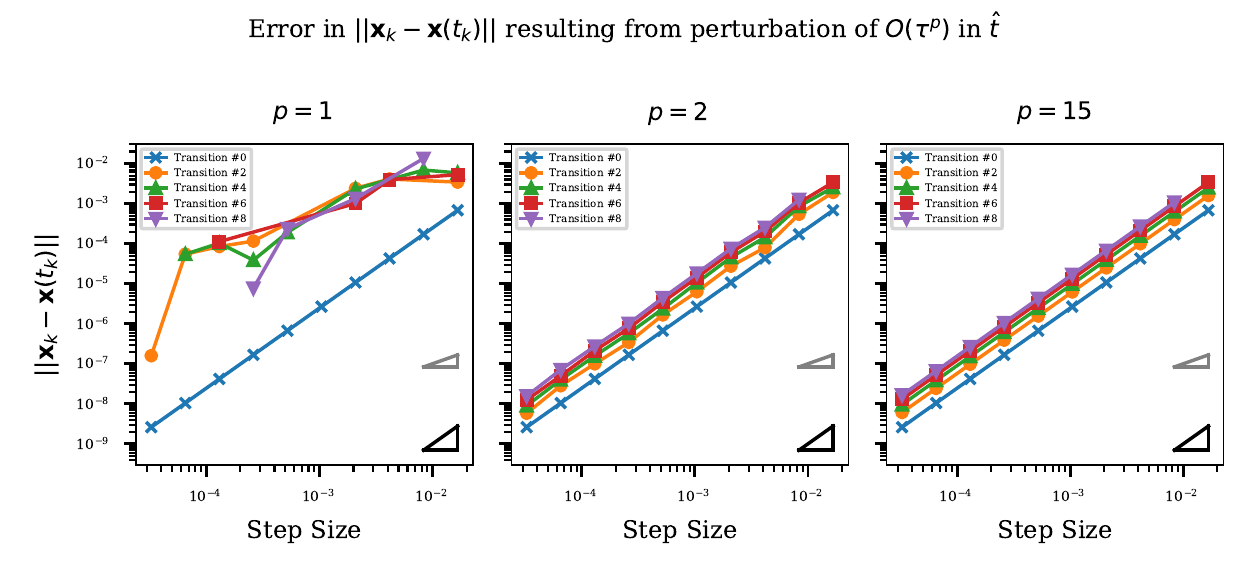}
  \caption{Error in the solution for the PWS elliptic curve system before and after crossing interface multiple times. See caption of Figure~\ref{fig:harmonic_transition} and the text for explanation.}
  \label{fig:elliptic_transition}
\end{figure}

\begin{figure}[h!]
    \centering
    \includegraphics{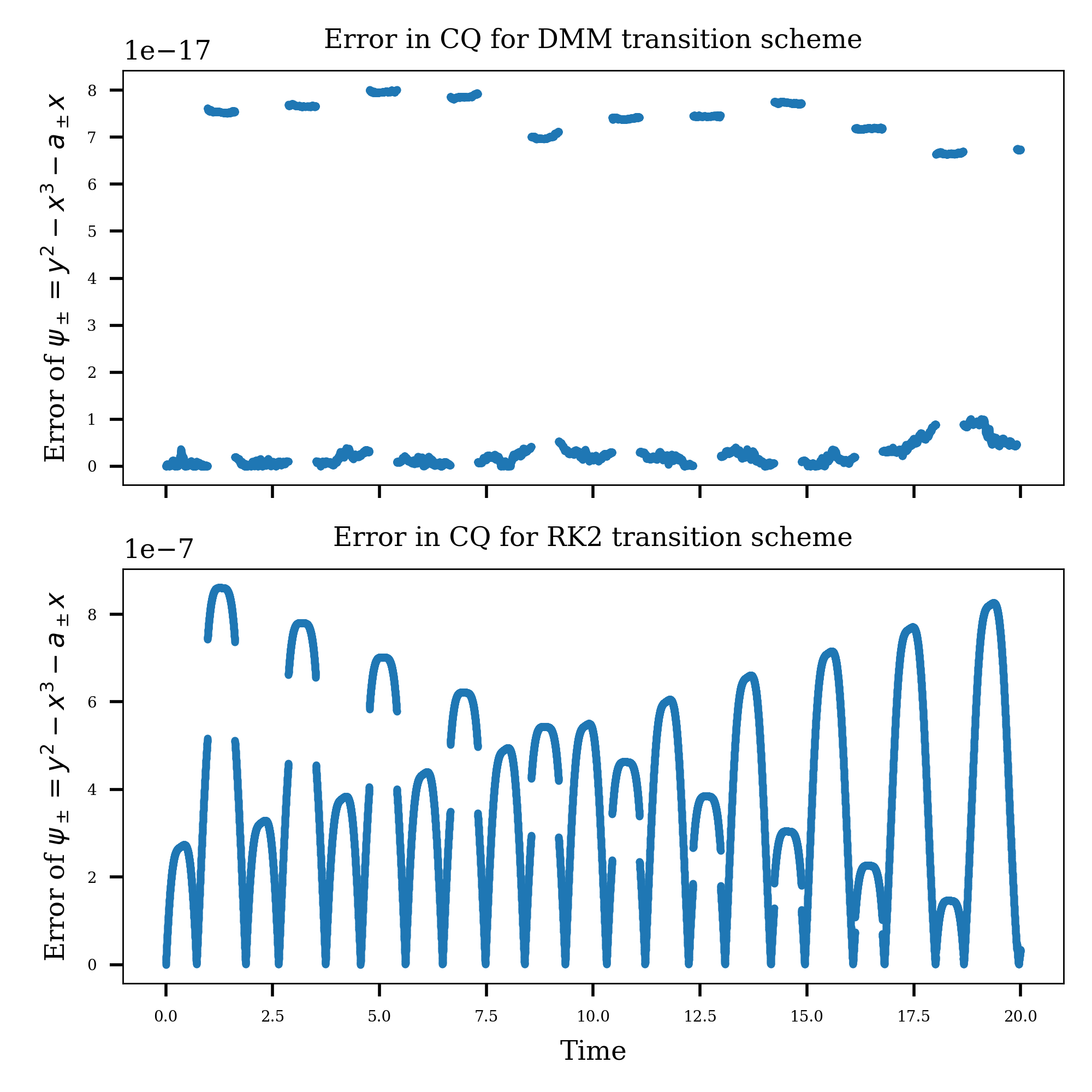}
    \caption{Comparison of our DMM transition scheme with RK2 transition scheme in computing conserved quantity (CQ) of a PWS elliptic curve system. A switching surface of $g(x,y) = y^2 + x^2 - 1 = 0$ was used, with $a=-2$ for $g>0$ and $a=-3$ for $g<0$. The error in CQ as computed by the DMM method is the top plot and as computed by RK2 is the bottom plot. An initial condition of $[-1,-1]$ and a step size $10^{-3}$ was used for both methods.}
    \label{fig:CQ_elliptic}
\end{figure}

The DMM integrator for the elliptic curve system was a symmetric\footnote{A one-step integrator is called symmetric if after exchanging quantities defined at time $t_k$ and $t_{k+1}$ and replacing $\tau$ by $-\tau$ yields the same integrator. It is known that a symmetric one step method has even order of accuracy. See \cite{HaLuWa2006} for more details.} method and as expected, it was empirically observed to have global truncation error of $\mathcal{O}(\tau^2)$. The error was computed by comparing with a reference solution computed by a 4th order Runge-Kutta fixed time step integrator using a time step of approximately $\tau = 1.6 \times 10^{-5}$. The numerical experiments setup is analogous to the PWS Harmonic oscillator. Both figures~\ref{fig:elliptic_time} and \ref{fig:elliptic_transition} show a behavior similar to that observed in the simpler harmonic oscillator example in this nonlinear example, which has a more interesting curved interface shape as compared to the straight line in the previous example and which has a cubic polynomial conserved quantity.

Figure~\ref{fig:CQ_elliptic} shows the ability of our DMM transition scheme to accurately reproduce the conserved quantities on the two sides of the interface even after multiple transitions. As expected, the error in the energy is close to machine precision for our method, whereas the RK2 integration shows an error of about $10^{-7}$ and a drift in the energy over time. A time step size of $10^{-3}$ was used for both integrators.

\section*{Acknowledgments}
The research of ANH was supported in part by KAUST Office of Sponsored Research under Award No. URF/1/3723-01-01 and by the DARPA Defense Sciences Office under Award No. HR0011-20-20019. The research of ATSW was supported by the NSERC Discovery Grant program and NSERC Launch Supplement. The research of NW was supported by KAUST Office of Sponsored Research under Award No. URF/1/3723-01-01. ANH and ATSW would like to thank the hospitality of the Isaac Newton Institute, Cambridge University during the Workshop on Geometry, Compatibility and Structure-Preservation in Computational Differential Equations during which part of this research was done.

\bibliographystyle{siamplain}
\bibliography{pws}

\appendix

\section{Existence of \texorpdfstring{$t^*, \vect{x}^*$}{t-star, x-star} and
  \texorpdfstring{$\hat{t}^*, \vect{\hat{x}}^*, \vect{x}_{k+1}$}{t-hat-star, x-hat-star, x k+1}}
\label{app:existence_intersection_points}
In order for the transition scheme of Algorithm~\ref{alg:trans_scheme} to be well-defined, we have implicitly assumed, upon arriving at $\bm{x}_k$, the existence of a transition time and point $t^*,\bm{x}^*$, and as well as the existence of a discrete transition time and point $\hat{t}^*, \bm{\hat{x}}^*$ and the subsequent discrete solution $\bm{x}_{k+1}$. In this section, we prove their existence and show the well-posedness of equations \eqref{eq:to-interface}-\eqref{eq:next}. First, we will show results on the existence of $t^*, \bm{x}^*$. Often in the following analysis, it will be convenient to define the sign function $h(t):=g(\bm{x}(t))$ to indicate phase space regions the trajectory $\bm{x}(t)$ lies in at time $t$.
\begin{lemma}[Sufficient condition for existence of $t^*, \bm{x}^*$] \label{lem:suff_cond_transition}
Let $g \in C(U\rightarrow \mathbb{R})$, $\bm{x}\in C(I\rightarrow U)$ and define $h:=g\circ {\bm x}$. If there are $a<b$ in $I$ such that $h(a)<0<h(b)$, then there exists $t^*\in(a,b)\subseteq I$ and $\bm{x}^*:=\bm x(t^*)$ satisfying $g(\bm{x}^*) = h(t^*)=0$.
\end{lemma}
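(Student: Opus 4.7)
The plan is a direct application of the Intermediate Value Theorem to the scalar function $h = g \circ \bm{x}$ on the closed interval $[a,b] \subseteq I$.

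First, I would verify that $h$ is continuous on $[a,b]$. This follows immediately from the hypotheses: $\bm{x} \in C(I \to U)$ maps $[a,b]$ continuously into $U$, and $g \in C(U \to \mathbb{R})$ is continuous on $U$, so the composition $h = g \circ \bm{x}$ is continuous on $[a,b]$, with $h$ taking real values.

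Next, I would apply the Intermediate Value Theorem. Since $h(a) < 0 < h(b)$ and $h$ is continuous on $[a,b]$, IVT guarantees the existence of some $t^* \in (a,b)$ with $h(t^*) = 0$. Define $\bm{x}^* := \bm{x}(t^*) \in U$. Then by construction $g(\bm{x}^*) = g(\bm{x}(t^*)) = h(t^*) = 0$, which is the required conclusion. Since $(a,b) \subseteq I$, we have $t^* \in I$ as well.

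There is no real obstacle here; the lemma is essentially a restatement of IVT in the composed form needed by Algorithm~\ref{alg:trans_scheme}. The only subtlety worth flagging explicitly is that the lemma provides existence but not uniqueness of the transition time, which is why the companion result (Lemma~\ref{lem:part_conv_transition}) invoking transversality is needed in the sequel to pin down a single crossing.
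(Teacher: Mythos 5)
Your proposal is correct and follows exactly the paper's argument, which also applies the intermediate value theorem to $h = g \circ \bm{x}$; you simply spell out the continuity of the composition, which the paper leaves implicit. No further comment is needed.
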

\begin{proof}This follows directly from the hypothesis and the intermediate value theorem applied to $h(t)$.
\end{proof}
In other words, there is a transition time $t^*$ and point $\bm x^*$ provided $g$ changes sign on some continuous trajectory $\bm{x}(t)$. A partial converse can also be shown using the transversality condition \eqref{eq:transverseA} and for solution $\bm{x}$ satisfying \ref{hyp:H1}-\ref{hyp:H3}.
\begin{lemma}[Partial converse of Lemma \ref{lem:suff_cond_transition}]
Let $[a,b]\subset I$ and $g\in C^1(U\rightarrow \mathbb{R})$ satisfying the transversality condition \eqref{eq:transverseA}. Further suppose there exists only one $t^*\in (a,b)$ with $\bm{x}^*:=\bm{x}(t^*)$ such that $0=h(t^*)=g(\bm{x}^*)$ and that $\bm{x}\in C(I\rightarrow U)$ is a solution of \eqref{eq:typeA} satisfying \ref{hyp:H1}-\ref{hyp:H3}.  Then there exists a $\delta>0$ such that for all $\alpha\in(t^*-\delta,t^*) \subset [a,b]$ and $\beta\in(t^*,t^*+\delta)\subset I$, $h(\alpha)<0<h(\beta)$.  \label{lem:part_conv_transition}
\end{lemma}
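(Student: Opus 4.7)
My plan is to establish the sign of $h(t)=g(\bm{x}(t))$ on either side of $t^*$ by showing that $h$ has strictly positive one-sided derivatives at $t^*$, and then invoking a mean-value argument. The key ingredients will be hypothesis \ref{hyp:H2} (existence of one-sided limits of $\dot{\bm{x}}$), hypothesis \ref{hyp:H3} (so that $h$ is $C^1$ on each side of $t^*$ in a neighborhood), and the transversality condition~\eqref{eq:transverseA}.

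First, I would pick $\delta_0>0$ small enough that $(t^*-\delta_0,t^*+\delta_0)\subset[a,b]$, and such that $t^*$ is the only transition time of $\bm{x}$ in this neighborhood. Hypothesis~\ref{hyp:H1} (finitely many transitions) together with the uniqueness hypothesis of the lemma make this possible. On the two punctured one-sided intervals $(t^*-\delta_0,t^*)$ and $(t^*,t^*+\delta_0)$, the trajectory is entirely in one of $U_\pm$, and by hypothesis~\ref{hyp:H3} it is $C^2$ there, so
\[
h'(t)=\nabla g(\bm{x}(t))\cdot\dot{\bm{x}}(t)
\]
exists and is continuous on each side.

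Next, using the continuity of $\nabla g$ and hypothesis~\ref{hyp:H2}, I would take one-sided limits:
\[
\lim_{t\downarrow t^*}h'(t) = \nabla g(\bm{x}^*)\cdot\bm{f}_{\sigma_+}(t^*,\bm{x}^*),\qquad
\lim_{t\uparrow t^*}h'(t) = \nabla g(\bm{x}^*)\cdot\bm{f}_{\sigma_-}(t^*,\bm{x}^*),
\]
where $\sigma_\pm\in\{+,-\}$ indicates which subdomain the trajectory occupies on the right/left of $t^*$. In either case, transversality~\eqref{eq:transverseA} gives both limits $\geq\alpha_S^2>0$.

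Finally, by continuity of $h'$ on each one-sided punctured interval, there exists $\delta\in(0,\delta_0]$ such that $h'(t)\geq\alpha_S^2/2$ for all $t\in(t^*-\delta,t^*)\cup(t^*,t^*+\delta)$. Applying the mean value theorem on $[\alpha,t^*-\epsilon]$ and $[t^*+\epsilon,\beta]$ and sending $\epsilon\downarrow 0$ (using continuity of $h$ at $t^*$ together with $h(t^*)=0$), one obtains
\[
h(\beta)\geq \tfrac{\alpha_S^2}{2}(\beta-t^*)>0,\qquad
h(\alpha)\leq -\tfrac{\alpha_S^2}{2}(t^*-\alpha)<0,
\]
for every $\alpha\in(t^*-\delta,t^*)$ and $\beta\in(t^*,t^*+\delta)$, which is the desired conclusion.

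The only subtle step is the passage from the existence of one-sided limits of $\dot{\bm{x}}$ (at the single point $t^*$) to a uniform lower bound on $h'$ on a whole one-sided neighborhood; this is where \ref{hyp:H3} is essential, because it guarantees that $h'$ is actually continuous on each punctured side of $t^*$, so the strictly positive one-sided limit propagates to a uniform bound on $(t^*-\delta,t^*)\cup(t^*,t^*+\delta)$. Everything else is routine.
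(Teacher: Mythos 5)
Your proposal is correct and follows essentially the same route as the paper's proof: identify $h'=\nabla g(\bm{x}(\cdot))\cdot\bm{f}_\pm(\cdot,\bm{x}(\cdot))$ on each side of $t^*$, use \ref{hyp:H2} and transversality to get strictly positive one-sided limits at $t^*$, propagate this to a uniform positive lower bound on $h'$ in a punctured $\delta$-neighborhood by one-sided continuity, and conclude via the mean value theorem together with $h(t^*)=0$. The only cosmetic difference is that the paper extends $h'$ continuously to the closed one-sided intervals and applies the mean value theorem directly on $[\alpha,t^*]$ and $[t^*,\beta]$, whereas you work on $[\alpha,t^*-\epsilon]$ and let $\epsilon\downarrow 0$; the two devices are interchangeable.
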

\begin{proof} First, we note that $h\in C(I\rightarrow \mathbb{R})$, since $g\in C(U\rightarrow \mathbb{R})$ and $\bm{x}\in C(I\rightarrow U)$. Note, $h'$ is not continuous on $I$ since \begin{align*}
h'(t) =
\begin{cases}
(\nabla g\cdot \bm{f}_-)(t,\bm{x}(t)), & t\in [a,t^*), \\
(\nabla g\cdot \bm{f}_+)(t,\bm{x}(t)), & t\in (t^*,b].
\end{cases}
\end{align*}
However from \ref{hyp:H2}, the one sided limits of $h'$ as $t\rightarrow t^*$ exists. That is, $\lim_{t\uparrow t^*} h'(t^*) = (\nabla g\cdot \bm{f}_-)(t^*,\bm{x}^*)$ and $\lim_{t\downarrow t^*} h'(t^*) = (\nabla g\cdot \bm{f}_+)(t^*,\bm{x}^*)$. In order words, we can extend $h'$ separately so that $h'\in C^1([a,t^*]\rightarrow \mathbb{R})$ and $h'\in C^1([t^*,b]\rightarrow \mathbb{R})$. Since $h(t^*)=0$, then by the mean value theorem applied to $h$ on $[a,t^*]$ and $[t^*,b]$, there exists $\xi_-(t)\in (t, t^*)$ for all $t\in (a,t^*)$ satisfying $h(t) = h'(\xi_-(t))(t-t^*)$ and there exists $\xi_+(t)\in (t^*,t)$ for all $t\in (t^*,b)$ satisfying $h(t) = h'(\xi_+(t))(t-t^*)$. Combining the transversality condition \eqref{eq:transverseA} and the continuity of $\nabla g, \bm{f}_{\pm}, \bm{x}$ from the left and right of $t^*$ separately, there exists $\delta>0$ such that $h'(\eta)>0$ for all $\eta \in (t^*-\delta,t^*)\bigcup(t^*,t^*+\delta)$. In other words, since  for all $\alpha\in(t^*-\delta,t^*)$, $\xi_-(\alpha) \in (\alpha,t^*) \subset (t^*-\delta,t^*)$ and so $h'(\xi_-(\alpha))>0$, which implies $h(\alpha)=h'(\xi_-(\alpha))(\alpha-t^*)<0$ for all $\alpha\in(t^*-\delta,t^*)$. Similarly, since for all $\beta\in(t^*,t^*+\delta)$, $\xi_+(\beta) \in (t^*,\beta) \subset (t^*,t^*+\delta)$ and so $h'(\xi_+(\beta))>0$, which implies $  
h(\beta)=h'(\xi_-(\beta))(\beta-t^*)>0$ for all $\beta\in(t^*,t^*+\delta)$.
\end{proof}
Having discussed the existence of $t^*, \bm{x}^*$, we now show the existence of their discrete counterparts $\hat{t}^*, \hat{\bm{x}}^*$, which satisfy equations \eqref{eq:to-interface}-\eqref{eq:interface}, and $\bm{x}_{k+1}$, which satisfies \eqref{eq:next}. For this, we will assume there is a transition time $t^*\in [t_k, t_{k+1}]$. 
and we will need the following lemma on the existence of a fixed point of the map $T_{\tau_-}$ defined by $T_{\tau_-}(\bm{x}) := \bm{x}_k + \tau_- \bm{f}_-^\tau(t_k, \bm{x}_k, t, \bm{x})$, for any fixed $\tau_- := t-t_k\in [0,\tau]$.
Note that equation \eqref{eq:to-interface} is equivalent to a fixed point $\bm{\hat{x}} = T_{\tau_-}(\bm{\hat{x}})$ for some fixed $\tau_-$.
\begin{lemma}
Let $\delta > 0$ and ${\bm f}_-^\tau(t_k,\bm{x}_k,\cdot,\cdot) \in Lip([t_k,t_{k+1}]\times B_\delta(\bm{x}_k))\rightarrow \mathbb{R}^d$. Let $L_1(\delta)>0$ be the Lipschitz constant so that for $t,t'\in [t_k,t_{k+1}]$ and $\bm x\in B_\delta(\bm{x}_k)$,
 \[
    \norm{f_-^\tau(t_k, \bm{x}_k, t, \bm{x}) - f_-^\tau(t_k, \bm{x}_k, t', \bm{x})} \leq L_1  \vert t - t' \vert\, ,
  \]
and $L_2(\delta)>0$ as the Lipschitz constant so that for $t\in [t_k,t_{k+1}]$ and $\bm x,\bm y\in B_\delta(\bm{x}_k)$,
  \[
    \norm{f_-^\tau(t_k, \bm{x}_k, t, \bm{x}) - f_-^\tau(t_k, \bm{x}_k, t, \bm{y})} \leq L_2 \norm{\bm{x} - \bm{y}}\,.
  \] Also, define $M = \displaystyle\max_{(t,\bm{x}) \in [t_k,t_{k+1}] \times B_\delta(\bm{x}_k)} \norm{f_-^\tau(t_k, \bm{x}_k, t, \bm{x})}$.
  For any $0<\epsilon<\frac{1}{L_2}$ and $\tau_-^\ast := \min \left\{ \frac{\delta}{M}, \frac{1}{L_2}-\epsilon \right\}$, then the following holds:
  \begin{enumerate}[label=(\roman*)]
  \item For any fixed $\tau_- \in [0,\tau_-^\ast]$, the map $T_{\tau_-} : B_\delta(\bm{x}_k) \rightarrow B_\delta(\bm{x}_k)$ is a contraction map and has a unique fixed point $\bm{\hat{x}}(\tau_-)$.
  \item The discrete solution $\bm{\hat{x}} : [0, \tau_-^\ast] \rightarrow B_\delta(\bm{x}_k)$ is Lipschitz continuous in $\tau_{-}$ with its Lipschitz constant depending on $\delta, \epsilon$.
  \end{enumerate}\label{lem:wellposed-lemma1}
\end{lemma}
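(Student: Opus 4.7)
The plan is to prove (i) via the Banach fixed point theorem applied to $T_{\tau_-}$ on the complete metric space $B_\delta(\bm{x}_k)$, and to deduce (ii) by differencing the two fixed-point equations at $\tau_-^{(1)}$ and $\tau_-^{(2)}$ and absorbing the resulting $\bm{\hat{x}}$-difference into the left-hand side using the contraction estimate already obtained in (i). Both parts reduce to routine bookkeeping once the correct add-and-subtract is set up.

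For part (i), I would first check that $T_{\tau_-}$ is a self-map. For any $\bm{x} \in B_\delta(\bm{x}_k)$, the definition of $M$ gives
\[
\|T_{\tau_-}(\bm{x}) - \bm{x}_k\| = \tau_- \|\bm{f}_-^\tau(t_k, \bm{x}_k, t_k + \tau_-, \bm{x})\| \leq \tau_- M \leq \delta,
\]
provided $\tau_- \leq \delta/M$. Next, using the Lipschitz constant $L_2$ in the last argument,
\[
\|T_{\tau_-}(\bm{x}) - T_{\tau_-}(\bm{y})\| \leq \tau_- L_2 \|\bm{x} - \bm{y}\|,
\]
so $T_{\tau_-}$ is a strict contraction with factor $\tau_- L_2 \leq (1/L_2 - \epsilon) L_2 = 1 - \epsilon L_2 < 1$ whenever $\tau_- \leq 1/L_2 - \epsilon$. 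Both conditions hold simultaneously for $\tau_- \in [0,\tau_-^\ast]$ with $\tau_-^\ast = \min\{\delta/M,\, 1/L_2 - \epsilon\}$, and Banach's theorem yields the unique fixed point $\bm{\hat{x}}(\tau_-) \in B_\delta(\bm{x}_k)$.

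For part (ii), take $\tau_-^{(1)}, \tau_-^{(2)} \in [0,\tau_-^\ast]$ with fixed points $\bm{\hat{x}}^{(i)} := \bm{\hat{x}}(\tau_-^{(i)})$ satisfying $\bm{\hat{x}}^{(i)} = \bm{x}_k + \tau_-^{(i)} \bm{f}_-^\tau(t_k, \bm{x}_k, t_k + \tau_-^{(i)}, \bm{\hat{x}}^{(i)})$. Subtracting and inserting the cross term $\tau_-^{(2)} \bm{f}_-^\tau(t_k, \bm{x}_k, t_k + \tau_-^{(1)}, \bm{\hat{x}}^{(1)})$ splits the difference into a prefactor piece controlled by $M$ and an argument piece controlled by $L_1$ in $t$ and $L_2$ in $\bm{x}$. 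The resulting inequality
\[
(1 - \tau_-^{(2)} L_2)\, \|\bm{\hat{x}}^{(1)} - \bm{\hat{x}}^{(2)}\| \leq (M + \tau_-^{(2)} L_1)\, |\tau_-^{(1)} - \tau_-^{(2)}|,
\]
together with the bound $1 - \tau_-^{(2)} L_2 \geq \epsilon L_2 > 0$ coming from the choice of $\tau_-^\ast$, yields Lipschitz continuity of $\bm{\hat{x}}$ in $\tau_-$ with constant $(M + \tau_-^\ast L_1)/(\epsilon L_2)$, which depends only on $\delta$ (through $M, L_1, L_2$) and $\epsilon$.

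The only nontrivial point is that the prefactor $\tau_-$ and the last two arguments of $\bm{f}_-^\tau$ both move when $\tau_-$ changes, so one must not simply invoke the contraction estimate of part (i) with $\tau_-^{(1)}$ and $\tau_-^{(2)}$ plugged into the same map. The correct add-and-subtract isolates the change in $\tau_-$ in the prefactor from the change in the time slot of $\bm{f}_-^\tau$ and from the change in $\bm{\hat{x}}$; then the $\bm{\hat{x}}$-difference that reappears on the right-hand side is strictly dominated by its left-hand side counterpart since $\tau_-^{(2)} L_2 < 1$, and rearrangement closes the estimate.
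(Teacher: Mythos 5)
Your proposal is correct and follows essentially the same route as the paper: part (i) is the identical self-map plus contraction verification feeding into Banach's fixed point theorem, and part (ii) is the same add-and-subtract of the cross term leading to $(1-\tau_-' L_2)\norm{\hat{\bm{x}}(\tau_-)-\hat{\bm{x}}(\tau_-')}\leq (M+\tau_-' L_1)\vert\tau_- - \tau_-'\vert$ and division by $1-\tau_-' L_2\geq \epsilon L_2$. The final Lipschitz constant $(M+\tau_-^\ast L_1)/(\epsilon L_2)$ matches the paper's.
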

\begin{proof}
  It suffices to show that $T_{\tau_-}$ satisfies the hypothesis of the Banach Fixed Point Theorem. First, since
$ \norm{T_{\tau_-}(\bm{x}) - \bm{x}_k} \leq \tau_- M < \delta$ for $\tau_- \leq \tau_-^\ast$, the map $T_{\tau_-} : B_\delta(\bm{x}_k) \rightarrow B_\delta(\bm{x}_k)$ is well-defined. Moreover, since $\tau_- L_2 \leq \tau_-^* L_2\leq 1- \epsilon L_2< 1$, for any $\bm{x}, \bm{y} \in B_\delta(\bm{x}_k)$,
  \begin{align*}
    \norm{T_{\tau_-}(\bm{x}) - T_{\tau_-}(\bm{y})} &= \tau_- \norm{f_-^\tau(t_k, \bm{x}_k, t, \bm{x}) - f_-^\tau(t_k, \bm{x}_k, t, \bm{y})} \\ &\leq \tau_- L_2 \norm{\bm{x} - \bm{y}} < \norm{\bm{x} - \bm{y}}.
  \end{align*} Thus, for any fixed $\tau_- \in [0,\tau_-^\ast]$, $T_{\tau_-} : B_\delta(\bm{x}_k) \rightarrow B_\delta(\bm{x}_k)$ is a contraction map and has a unique fixed point $\bm{\hat{x}}(\tau_-)\in B_\delta(\bm{x}_k)$. To show Lipschitz continuity of $\bm{\hat{x}}(\tau_-)$ in $[0, \tau_-^\ast]$, note that for any $\tau_-$, $\tau'_- \in [0, \tau_-^\ast]$, with $\bm{\hat{x}}' := \bm{\hat{x}}(\tau'_-)$
\begin{align*}
  &\norm{\bm{\hat{x}}(\tau_-) - \bm{\hat{x}}(\tau_-')} = \norm{T_{\tau_-}(\bm{\hat{x}}) - T_{\tau_-'}(\bm{\hat{x}}')} = \norm{\tau_- f_-^\tau(t_k, \bm{x}_k, t, \bm{\hat{x}}) - \tau_-' f_-^\tau(t_k, \bm{x}_k, t', \bm{\hat{x}}')} \\
                                        ~  &\leq \tau_- \norm{f_-^\tau(t_k, \bm{x}_k, t, \bm{\hat{x}}) - f_-^\tau(t_k, \bm{x}_k, t', \bm{\hat{x}})} + \tau_-' \norm{f_-^\tau(t_k, \bm{x}_k, t', \bm{\hat{x}}) - f_-^\tau(t_k, \bm{x}_k, t', \bm{\hat{x}}')}\\
                                       &\quad+|\tau_- - \tau_-'| \norm{f_-^\tau(t_k, \bm{x}_k, t', \bm{\hat{x}})}\\
                                        ~  &\leq \tau_- L_1 |t - t'|  + \tau_-' L_2 \vert \bm{\hat{x}} - \bm{\hat{x}}'\vert\, + M |\tau_- - \tau_-'|.\\
&\Rightarrow (1-\tau_-'L_2)\norm{\bm{\hat{x}}(\tau_-) - \bm{\hat{x}}(\tau_-')} \leq (M+\tau_-L_1)|\tau_- - \tau_-'|
\end{align*}
Since $\tau_-'\leq \tau_-^*\leq \frac{1}{L_2}-\epsilon$, then $0<\epsilon L_2\leq 1-\tau_-^*L_2\leq 1-\tau_-'L_2$, which implies
\[
  \norm{ \bm{\hat{x}}(\tau_-) - \bm{\hat{x}}(\tau_-')  } \le \frac{M+\tau_-' L_1}{1-\tau_-'L_2}|\tau_- - \tau_-'|\ \leq \frac{M+\tau_-^* L_1}{\epsilon L_2}|\tau_- - \tau_-'|\, .
\]
\end{proof}
Similarly, if $(\hat{\tau}^*,\hat{\bm{x}}^*)$ exists, then the following lemma can be established for equation \eqref{eq:next} with the map $T_{\tau_+}$ defined by $T_{\tau_+}(\bm{x}) := \bm{\hat{x}}^*+ \tau_+ \bm{f}_+^\tau(\hat{\tau}^*,\hat{\bm{x}}^*, t, \bm{x})$, for any fixed $\tau_+ := t-\hat{t}^*\in [0,\tau]$. We omit the proof, since it is nearly identical to Lemma \ref{lem:wellposed-lemma1}.
\begin{lemma}
Let $\delta > 0$ and ${\bm f}_+^\tau(\hat{\tau}^*,\hat{\bm{x}}^*,\cdot,\cdot) \in Lip([\hat{t}^*,t_{k+1}]\times B_\delta(\hat{\bm{x}}^*))\rightarrow \mathbb{R}^d$. Let $L_1(\delta)>0$ as the Lipschitz constant so that for $t,t'\in [\hat{t}^*,t_{k+1}]$ and $\bm x\in B_\delta(\hat{\bm{x}}^*)$,
 \[
    \norm{f_+^\tau(\hat{\tau}^*,\hat{\bm{x}}^*, t, \bm{x}) - f_+^\tau(\hat{\tau}^*,\hat{\bm{x}}^*, t', \bm{x})} \leq L_1  \vert t - t' \vert\, ,
  \]
and $L_2(\delta)>0$ as the Lipschitz constant so that for $t\in [t_k,t_{k+1}]$ and $\bm x,\bm y\in B_\delta(\hat{\bm{x}}^*)$,
  \[
    \norm{f_+^\tau(\hat{\tau}^*,\hat{\bm{x}}^*, t, \bm{x}) - f_+^\tau(\hat{\tau}^*,\hat{\bm{x}}^*, t, \bm{y})} \leq L_2 \norm{\bm{x} - \bm{y}}\,.
  \] Also, define $M = \displaystyle\max_{(t,\bm{x}) \in [t_k,t_{k+1}] \times B_\delta(\hat{\bm{x}}^*)} \norm{f_+^\tau(\hat{\tau}^*,\hat{\bm{x}}^*, t, \bm{x})}$.
  For any $0<\epsilon<\frac{1}{L_2}$ and $\tau_+^\ast := \min \left\{ \frac{\delta}{M}, \frac{1}{L_2}-\epsilon \right\}$, then the following holds:
  \begin{enumerate}[label=(\roman*)]
  \item For any fixed $\tau_+ \in [0,\tau_+^\ast]$, the map $T_{\tau_+} : B_\delta(\hat{\bm{x}}^*) \rightarrow B_\delta(\hat{\bm{x}}^*)$ is a contraction map and has a unique fixed point $\bm{\hat{x}}(\tau_+)$.
  \item The discrete solution $\bm{\hat{x}} : [0, \tau_+^\ast] \rightarrow B_\delta(\hat{\bm{x}}^*)$ is Lipschitz continuous in $\tau_{+}$ with its Lipschitz constant depending on $\delta, \epsilon$.
  \end{enumerate}\label{lem:wellposed-lemma2}
\end{lemma}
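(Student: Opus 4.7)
The plan is to mirror the proof of Lemma~\ref{lem:wellposed-lemma1} almost verbatim, exploiting the fact that the only differences between the two statements are the base point ($\hat{\bm{x}}^*$ in place of $\bm{x}_k$), the time interval ($[\hat{t}^*, t_{k+1}]$ in place of $[t_k, t_{k+1}]$), and the vector field ($\bm{f}_+^\tau$ in place of $\bm{f}_-^\tau$). Since the hypothesis $(\hat{t}^*, \hat{\bm{x}}^*)$ already exists is being assumed, the map $T_{\tau_+}$ is well-defined, and the same Banach fixed-point machinery applies.

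For part (i), I would first verify that $T_{\tau_+}$ maps the closed ball $B_\delta(\hat{\bm{x}}^*)$ into itself: for any $\bm{x}\in B_\delta(\hat{\bm{x}}^*)$,
\[
\norm{T_{\tau_+}(\bm{x}) - \hat{\bm{x}}^*} = \tau_+\norm{\bm{f}_+^\tau(\hat{t}^*,\hat{\bm{x}}^*,t,\bm{x})} \leq \tau_+ M \leq \tau_+^* M \leq \delta,
\]
where the last inequality follows from $\tau_+^* \leq \delta/M$. Then I would invoke the hypothesis $\tau_+^* \leq 1/L_2 - \epsilon$ to obtain the strict contraction constant: for $\bm{x},\bm{y}\in B_\delta(\hat{\bm{x}}^*)$,
\[
\norm{T_{\tau_+}(\bm{x}) - T_{\tau_+}(\bm{y})} \leq \tau_+ L_2 \norm{\bm{x}-\bm{y}} \leq (1-\epsilon L_2)\norm{\bm{x}-\bm{y}},
\]
with $1-\epsilon L_2 < 1$. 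The Banach fixed point theorem then delivers the unique fixed point $\hat{\bm{x}}(\tau_+)\in B_\delta(\hat{\bm{x}}^*)$.

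For part (ii), the Lipschitz estimate in $\tau_+$ follows the identical three-term split used in Lemma~\ref{lem:wellposed-lemma1}. Writing $\hat{\bm{x}}:=\hat{\bm{x}}(\tau_+)$ and $\hat{\bm{x}}':=\hat{\bm{x}}(\tau_+')$, and using the fixed-point equations plus the triangle inequality,
\[
\norm{\hat{\bm{x}}-\hat{\bm{x}}'} \leq \tau_+ L_1 |t-t'| + \tau_+' L_2 \norm{\hat{\bm{x}}-\hat{\bm{x}}'} + M|\tau_+-\tau_+'|.
\]
Since $t-t' = \tau_+ - \tau_+'$ (as $t = \hat{t}^* + \tau_+$), rearranging gives $(1-\tau_+' L_2)\norm{\hat{\bm{x}}-\hat{\bm{x}}'} \leq (M + \tau_+^* L_1)|\tau_+-\tau_+'|$. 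The bound $\tau_+' \leq 1/L_2 - \epsilon$ yields $1-\tau_+' L_2 \geq \epsilon L_2 > 0$, so dividing through produces the explicit Lipschitz constant $(M+\tau_+^* L_1)/(\epsilon L_2)$ which depends only on $\delta$ and $\epsilon$.

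There is no substantive obstacle here beyond bookkeeping: the assumption that $(\hat{t}^*, \hat{\bm{x}}^*)$ exists makes the base point fixed, so no new implicit-function reasoning is needed, and the Lipschitz constants $L_1, L_2$ and bound $M$ are postulated in the hypotheses exactly as in the first lemma. The only point worth flagging for care is the identification $t - t' = \tau_+ - \tau_+'$, which relies on parameterizing by the elapsed time from $\hat{t}^*$; this must be stated explicitly to justify replacing $|t-t'|$ by $|\tau_+-\tau_+'|$ in the Lipschitz step. Everything else is a direct transcription of the proof of Lemma~\ref{lem:wellposed-lemma1}.
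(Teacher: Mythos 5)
Your proposal is correct and matches the paper's intent exactly: the paper omits the proof of this lemma, stating it is nearly identical to that of Lemma~\ref{lem:wellposed-lemma1}, and your transcription (self-mapping of the ball via $\tau_+ M \leq \delta$, contraction constant $1-\epsilon L_2$, and the three-term split giving Lipschitz constant $(M+\tau_+^* L_1)/(\epsilon L_2)$) is precisely that argument with the base point, interval, and vector field substituted. Your remark about justifying $t - t' = \tau_+ - \tau_+'$ is a fair point of care but introduces no new difficulty.
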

Now we are ready to show the existence of $\hat{t}^*,\hat{\bm{x}}^*$ and $\bm{x}_{k+1}$ for equations \eqref{eq:to-interface}-\eqref{eq:next}. From the transition scheme as described in Algorithm \ref{alg:trans_scheme}, we would only seek $\hat{t}^*,\hat{\bm{x}}^*$ if the proposed discrete solution $\tilde{\bm{x}}_{k+1}$ have a different sign $g(\tilde{\bm{x}}_{k+1})$ than $g(\bm{x}_k)$. Thus for concreteness and without loss of generality, we will assume that $g(\bm{x}_k)<0$ and $g(\tilde{\bm{x}}_{k+1})>0$.
\begin{proposition}[Existence of $\hat{t}^*,\hat{\bm{x}}^*$ and $\bm{x}_{k+1}$]
Fix a $\delta>0$ and suppose for such $\delta$, the hypothesis of Lemma \ref{lem:wellposed-lemma1} and \ref{lem:wellposed-lemma2} hold. Assume $g(\bm{x}_k)<0$ and for the time step size $\tau\leq \min\{\tau_-^*,\tau_+^*\}$, the proposed discrete solution $\tilde{\bm{x}}_{k+1}:=\hat{\bm{x}}(\tau)$ is well-defined and satisfies $g(\tilde{\bm{x}}_{k+1})>0$. Then there exists $\hat{t}^*\in (t_k,t_{k+1})$ and $\hat{\bm{x}}^*:=\hat{\bm{x}}(\hat{t}^*-t_k)$ such that $g(\hat{\bm{x}}^*)=0$; that is $(\hat{t}^*,\hat{\bm{x}}^*)$ satisfy equations \eqref{eq:to-interface} and \eqref{eq:interface}. Moreover, there exists $\bm{x}_{k+1}\in B_\delta(\hat{\bm{x}}^*)$ which satisfies equation \eqref{eq:next}.
\label{prop:existence_numerical_transition}
\end{proposition}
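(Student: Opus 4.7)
The plan is to combine the fixed-point results of Lemma~\ref{lem:wellposed-lemma1} and Lemma~\ref{lem:wellposed-lemma2} with an intermediate value argument applied to the scalar function $g$ composed with the branch of fixed points $\hat{\bm{x}}(\tau_-)$. I will split the proof into two parts corresponding to the two claims of the proposition.

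First, to establish the existence of $(\hat{t}^*, \hat{\bm{x}}^*)$, I would invoke Lemma~\ref{lem:wellposed-lemma1} to obtain, for each $\tau_- \in [0, \tau_-^*]$, a unique fixed point $\hat{\bm{x}}(\tau_-) \in B_\delta(\bm{x}_k)$ of $T_{\tau_-}$, along with the Lipschitz dependence of $\hat{\bm{x}}$ on $\tau_-$. At $\tau_- = 0$ the map $T_0$ sends every point to $\bm{x}_k$, hence $\hat{\bm{x}}(0) = \bm{x}_k$; by hypothesis on $\tilde{\bm{x}}_{k+1}$, one also has $\hat{\bm{x}}(\tau) = \tilde{\bm{x}}_{k+1}$. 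Since $\tau \leq \tau_-^*$, the curve $\hat{\bm{x}}$ is well defined on the entire interval $[0, \tau]$.

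Next, define $\hat{h}\colon [0, \tau] \to \mathbb{R}$ by $\hat{h}(\tau_-) := g\bigl(\hat{\bm{x}}(\tau_-)\bigr)$. Because $g \in C(U \to \mathbb{R})$ and $\hat{\bm{x}}$ is Lipschitz in $\tau_-$, the composition $\hat{h}$ is continuous on $[0, \tau]$. The sign hypotheses give $\hat{h}(0) = g(\bm{x}_k) < 0$ and $\hat{h}(\tau) = g(\tilde{\bm{x}}_{k+1}) > 0$, so the intermediate value theorem yields some $\tilde{\tau} \in (0, \tau)$ with $\hat{h}(\tilde{\tau}) = 0$. Setting $\hat{t}^* := t_k + \tilde{\tau} \in (t_k, t_{k+1})$ and $\hat{\bm{x}}^* := \hat{\bm{x}}(\tilde{\tau})$ produces a pair that satisfies \eqref{eq:to-interface} (because $\hat{\bm{x}}(\tilde{\tau})$ is a fixed point of $T_{\tilde{\tau}}$) and \eqref{eq:interface} (because $\hat{h}(\tilde{\tau}) = 0$).

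For the second claim, with $(\hat{t}^*, \hat{\bm{x}}^*)$ in hand, define $\tau_+ := t_{k+1} - \hat{t}^*$ and note that $0 < \tau_+ < \tau \leq \tau_+^*$, so $\tau_+ \in [0, \tau_+^*]$. Lemma~\ref{lem:wellposed-lemma2} then provides a unique fixed point $\bm{x}_{k+1} \in B_\delta(\hat{\bm{x}}^*)$ of the map $T_{\tau_+}(\bm{x}) = \hat{\bm{x}}^* + \tau_+\, \bm{f}_+^\tau(\hat{t}^*, \hat{\bm{x}}^*, t_{k+1}, \bm{x})$, which is precisely equation~\eqref{eq:next}. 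There is no serious obstacle here: the argument is essentially a scalar intermediate value theorem glued onto the Lipschitz branch of fixed points from the preceding two lemmas. The only small verification is that $T_0$ is the constant map to $\bm{x}_k$ so that $\hat{\bm{x}}(0) = \bm{x}_k$, which is immediate from the definition at $\tau_- = 0$.
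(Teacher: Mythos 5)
Your proposal is correct and follows essentially the same route as the paper: both obtain the Lipschitz (hence continuous) branch of fixed points $\hat{\bm{x}}(\tau_-)$ from Lemma~\ref{lem:wellposed-lemma1}, apply the intermediate value theorem to $g\circ\hat{\bm{x}}$ using the sign change between $\bm{x}_k$ and $\tilde{\bm{x}}_{k+1}$ (the paper packages this step as Lemma~\ref{lem:suff_cond_transition}), and then invoke Lemma~\ref{lem:wellposed-lemma2} with $t_{k+1}-\hat{t}^*\leq\tau\leq\tau_+^*$ to produce $\bm{x}_{k+1}$. No gaps.
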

\begin{proof}
By Lemma \ref{lem:wellposed-lemma1}, $\hat{\bm{x}}:[0,\tau_-^*]\rightarrow B_\delta(\bm{x}_k)$ is continuous and so the function $h(t):=g(\hat{\bm{x}}(t-t_k))$ satisfies $h(t_k)=g(\hat{\bm{x}}(0))=g(\bm{x}_k)<0$ and $h(t_{k+1})=g(\hat{\bm{x}}(\tau))=g(\tilde{\bm{x}}_{k+1})>0$. So by Lemma \ref{lem:suff_cond_transition}, there exists a time $\hat{t}^*\in (t_k,t_{k+1})$ with $\hat{\bm{x}}^*:=\hat{\bm{x}}(\hat{t}^*-t_k)$ so that $g(\hat{\bm{x}}^*)=h(\hat{t}^*)=0$. The existence of $\bm{x}_{k+1}:=\bm{\hat{x}}(t_{k+1}-\hat{t}^*)\in B_\delta(\hat{\bm{x}}^*)$ follows from Lemma \ref{lem:wellposed-lemma2} and that $t_{k+1}-\hat{t}^*\leq \tau \leq \tau_+^*$.
\end{proof}

\section{Analysis of \texorpdfstring{$g(\bm{x}(t; \bm{y}, s))$}{piecewise smooth flow} and
  \texorpdfstring{$g(\bm{x}^\tau(t; \bm{y}, s))$}{discrete flow} near \texorpdfstring{$\bm{x}^*, t^*$}{x-star t-star}}\label{sec:signAnalysis}

In the proof of the main theorem, we needed to analyze the discrete and piecewise smooth trajectories from different initial data. The following two pairs of lemmas provide uniform estimates of Lemma~\ref{lem:part_conv_transition} for different initial data, one for piecewise smooth trajectories and one for discrete trajectories. Moreover, the following lemmas give geometric estimates for lengths of time intervals during which the smooth or discrete trajectories stay on one side of the interface. These intervals are inversely proportional in length to perturbed versions of the constant $M$ or $\hat{M}$ of~\eqref{eq:M} and~\eqref{eq:hatM} which depend on the Hessian (hence curvature) of the switching function $g$. The lengths of the intervals are directly proportional to $\alpha_S$ and $\hat{\alpha}_S$. These in turn are small when the vector fields are close to tangential to the switching surface, and larger when the vector fields are transversal to it.

\begin{lemma}\label{lem:ctsMonotonicTime}
    Let the transversality condition \eqref{eq:transverseA} hold at $(\bm{x}^\ast, t^\ast)$. Then there exists $\epsilon>0$ such that for all
    \begin{equation*}
        \bm{y} \in K^\epsilon_+ := \bar{B}_\epsilon(\bm{x}^\ast) \cap S,  \qquad (t,s) \in J^\epsilon_+ := \left\{ (t,s) \in [t^\ast, t^\ast + \epsilon]^2 ~|~ t > s \right\},
    \end{equation*}
    there are no transitions contained in the time interval $(s, t^\ast + \epsilon]$ for any trajectory $\bm{x}(t; \bm{y}, s)$.
    Furthermore, let $M^\epsilon_+$ be defined as
    \begin{equation}
        M^\epsilon_+ := \frac{1}{2} \sup_{ \substack{ (t,s) \in J^\epsilon_+ \\ \bm{y} \in K^\epsilon_+} } \left|\dot{\bm{x}}(t)\cdot  H_g(\bm{x}(t))\dot{\bm{x}}(t)+\nabla g(\bm{x}(t))\cdot \ddot{\bm{x}}(t)\right|. \label{eq:PerturbedM}
    \end{equation}
    If $0 < t - s < \min \left( t^\ast + \epsilon - s, \frac{\alpha_S^2}{M^\epsilon_+} \right)$, then
    \begin{equation}
        g(\bm{x}(t; \bm{y}, s)) > 0\, .
        \label{eq:ctsMonoLemma}
    \end{equation}
\end{lemma}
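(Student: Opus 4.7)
The plan is to adapt the Taylor-expansion strategy of Lemma~\ref{lem:ctsTimeError}, but now expanding $g\circ\bm{x}(\cdot;\bm{y},s)$ around the initial time $s$ (where $\bm{y}\in S$ so $g(\bm{y})=0$) rather than around a transition time. Positivity of the first-order term will come directly from the transversality hypothesis applied at $(\bm{y},s)\in S\times I$, while the quadratic remainder is controlled uniformly by $M^\epsilon_+$. The ``no transitions'' claim and the pointwise sign claim~\eqref{eq:ctsMonoLemma} are then two facets of the same quadratic lower bound, bootstrapped against each other via the first transition time after $s$.

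First I would fix $\epsilon$. The map $\epsilon\mapsto M^\epsilon_+$ is monotone non-decreasing (the supremum runs over an enlarging set) and, by the $C^2$ hypothesis on $g$, Hypothesis~\ref{hyp:H3}, and continuity of $\bm{f}_+$ on a compact neighborhood of $(t^\ast,\bm{x}^\ast)$, stays bounded as $\epsilon\to 0$ (tending to a constant of the type in~\eqref{eq:M} evaluated on the reference trajectory). Hence $\alpha_S^2/M^\epsilon_+$ is bounded away from zero for small $\epsilon$, and one may pick $\epsilon>0$ small enough that $\epsilon<\alpha_S^2/M^\epsilon_+$, and simultaneously such that every trajectory $\bm{x}(\cdot;\bm{y},s)$ with $\bm{y}\in K^\epsilon_+$ and $s\in[t^\ast,t^\ast+\epsilon]$ remains within a fixed compact neighborhood of $\bm{x}^\ast$ on $[s,t^\ast+\epsilon]$.

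Next, for fixed $\bm{y}\in K^\epsilon_+$ and $s$, I would introduce $T:=\inf\{\tau>s : \bm{x}(\tau;\bm{y},s)\in S\}$ (possibly $+\infty$). Transversality yields $\nabla g(\bm{y})\cdot\bm{f}_+(s,\bm{y})\geq\alpha_S^2>0$, so the trajectory moves strictly into $U_+$ just past $s$; hence $T>s$ and the trajectory is $C^2$ on $(s,T)$ by Hypothesis~\ref{hyp:H3}. Taylor's theorem applied to $g\circ\bm{x}$ on $(s,\min(T,t^\ast+\epsilon)]$, together with $g(\bm{y})=0$, the transversality bound on the linear term, and the definition of $M^\epsilon_+$ bounding the remainder, yields the master estimate
\[
  g(\bm{x}(t;\bm{y},s))\;\geq\;(t-s)\bigl[\alpha_S^2 - M^\epsilon_+(t-s)\bigr].
\]
This already delivers \eqref{eq:ctsMonoLemma} whenever $0<t-s<\min(t^\ast+\epsilon-s,\alpha_S^2/M^\epsilon_+)$, provided $t<T$.

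To close the argument I would rule out $T\leq t^\ast+\epsilon$ by contradiction: in that case $g(\bm{x}(T;\bm{y},s))=0$ by definition of $T$, yet taking $t\uparrow T$ in the master estimate and using $T-s\leq \epsilon<\alpha_S^2/M^\epsilon_+$ from the first step forces $g(\bm{x}(T;\bm{y},s))>0$. Hence $T>t^\ast+\epsilon$, establishing the ``no transitions'' claim and upgrading the quadratic lower bound to the full interval $(s,t^\ast+\epsilon]$. The main obstacle is the mildly self-referential choice of $\epsilon$, since $M^\epsilon_+$ itself depends on $\epsilon$, coupled with the fact that Taylor's theorem needs $C^2$ regularity which is only guaranteed between transitions --- precisely what we aim to exclude. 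Both difficulties are resolved together: monotonicity and the bounded limit of $M^\epsilon_+$ as $\epsilon\to 0$ pin down $\epsilon$ once and for all, and the minimality of $T$ converts the regularity issue into a clean contradiction with the quadratic lower bound.
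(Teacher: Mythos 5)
Your proposal is correct, and the final sign estimate is the same as the paper's (Taylor expansion of $g\circ\bm{x}$ about $s$ with $g(\bm{y})=0$, the transversality bound $\nabla g(\bm{y})\cdot\bm{f}_+(s,\bm{y})\geq\alpha_S^2$ on the linear term, and $M^\epsilon_+$ controlling the quadratic remainder, giving $g(\bm{x}(t;\bm{y},s))\geq(t-s)\bigl[\alpha_S^2-M^\epsilon_+(t-s)\bigr]$). Where you genuinely diverge is in how $\epsilon$ is chosen and how the ``no transitions'' claim is obtained. The paper defines $F(t,s,\bm{y}):=\nabla g(\bm{x}(t;\bm{y},s))\cdot\bm{f}_+(t,\bm{x}(t;\bm{y},s))$, notes $F(t^\ast,t^\ast,\bm{x}^\ast)\geq\alpha_S^2$, and uses continuity of $F$ in all three arguments (via Lipschitz dependence on initial data) to get $F\geq\alpha_S^2/2$ uniformly on $J^\epsilon_+\times K^\epsilon_+$; since $F$ is the derivative of $g\circ\bm{x}$, this first-order monotonicity argument immediately keeps the trajectory in $U_+$ and fixes $\epsilon$ before $M^\epsilon_+$ is ever introduced, neatly sidestepping the circularity you flag. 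You instead derive no-transitions from the second-order estimate itself via a first-return-time contradiction, which forces the extra constraint $\epsilon<\alpha_S^2/M^\epsilon_+$ and obliges you to argue that $M^\epsilon_+$ is monotone and bounded as $\epsilon\to 0$ (which does hold, since off the interface $\ddot{\bm{x}}=\partial_t\bm{f}_++(D_x\bm{f}_+)\bm{f}_+$ is bounded on compacts by $\bm{f}_+\in C^1$, but this is an additional burden the paper avoids). Your route is self-contained and buys a quantitative lower bound on the transition-free window in terms of $\alpha_S^2/M^\epsilon_+$; the paper's route is shorter and decouples the existence of $\epsilon$ from the curvature constant, at the cost of invoking the regularity of the flow map in its initial data. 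Both are valid; your bootstrapping of the quadratic bound against the minimal return time is handled carefully enough to close the apparent circularity.
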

\begin{proof}
    The proof of \eqref{eq:PerturbedM} proceeds in three main steps: we first define an time interval where piecewise smooth trajectories starting on the interface are guaranteed to stay in $U_+$. After that, we show $M_+^\epsilon$ is well-defined. Finally, we introduce a key estimate of the lemma and use it to prove an explicit bound on time. 
    
    The first step of this proof has similarities to Lemma~\ref{lem:part_conv_transition}. However, whereas Lemma~\ref{lem:part_conv_transition} is a pointwise estimate for a specific initial data, here we derive an uniform estimate for a neighborhood of different initial data.  Define 
    $$F(t, s, \bm{y}) := \nabla g(\bm{x}(t; \bm{y}, s)) \cdot \bm{f}_+ (t, \bm{x}(t; \bm{y}, s)).$$ Note that $F$ is continuous in all its three arguments. This follows from the hypotheses that $g \in C^2$, $\bm{f}_+$ is continuous in all its arguments and $\bm{x}$ is assumed to be Lipschitz continuous in the initial data. Furthermore, by the transverality condition \eqref{eq:transverseA}, we have that $F(t^\ast, t^\ast, \bm{x}^\ast) \geq \alpha_S^2 > 0$. Then by continuity, there exists $\epsilon>0$ such that for $\bm{y} \in K^\epsilon_+$ and $(t,s) \in J^\epsilon_+$, $F(t, s, \bm{y}) \geq \frac{\alpha_S^2}{2}$. This guarantees that for all initial data $(\bm{y},s)\in K^\epsilon_+ \times [t^\ast, t^\ast + \epsilon]$, the trajectory $\bm{x}(t; \bm{y}, s)$ remains in $U_+$ for $t\in (s,t^*+\epsilon]$. Moreover, we now have a uniform $\epsilon$ to be used in the definition of $M^\epsilon_+$. 
    
    Next, we verify the well-definedness of $M_+^\epsilon$. First, by Hypothesis~\ref{hyp:H2}, the one-sided limit $\lim_{t \downarrow s} \dot{\bm{x}}(t; \bm{y}, s)$ exists. Second, since $\bm{x}(t; \bm{y}, s)$ is continuous, the supremum of $H_g$ can be taken over the closed interval $[s, t^\ast + \epsilon]$, which is bounded. Finally, the second derivative term $\ddot{\bm{x}}(t; \bm{y}, s)$ is bounded by Hypothesis~\ref{hyp:H3} and that there are no transitions on the interval $(s, t^\ast + \epsilon]$. Thus, $M_+^\epsilon$ is well-defined.
    
    For the last step to prove \eqref{eq:ctsMonoLemma}, since $g(\bm{x}(s; \bm{y}, s))=g(\bm{y})=0$, we have
    \begin{equation}
    \begin{split}
        g(\bm{x}(t; \bm{y}, s)) &= \nabla g(\bm{y})\cdot \bm{f}_+(s,\bm{y})(t-s)\\
        &\quad+\left(\frac{}{}g(\bm{x}(t; \bm{y}, s))-\left[g(\bm{x}(s; \bm{y}, s)) +\nabla g(\bm{x}(s; \bm{y}, s)) \cdot \bm{f}_+(s, \bm{y}) \right] (t-s)\right).
    \end{split} \label{eqn:signExp}
    \end{equation}
    Since $(t,s) \in J_+^\epsilon$, this implies $t-s < t^* + \epsilon - s$. Moreover if $0 < t - s < \frac{\alpha_S^2}{M_+^\epsilon}$, then by \eqref{lem:ctsTimeErrorTaylorTime} with $M$ replaced with $M_+^\epsilon$, \eqref{eqn:signExp} becomes
    \[
        g(\bm{x}(t; \bm{y}, s)) &\geq \underbrace{\nabla g(\bm{y}) \cdot \bm{f}_+(s, \bm{y})}_{\geq \alpha_S^2}(t - s) - M_+^\epsilon(t - s)^2\\ &
        \geq \underbrace{(t - s)}_{>0}\underbrace{(\alpha_S^2 - M_+^\epsilon(t - s))}_{>0} > 0.
    \]
    Taking $t-s$ less than the minimum of $\frac{\alpha_S^2}{M_+^\epsilon}$ and $t^* + \epsilon - s$ gives the desired result.
\end{proof}
Next, we show another version of Lemma~\ref{lem:ctsMonotonicTime} for $s > t$. Since the analysis is similar, we only highlight the differences in the proof.

\begin{lemma}\label{lem:ctsMonotonicTime2}
    Let the transversality condition \eqref{eq:transverseA} hold at $(\bm{x}^\ast, t^\ast)$. Then there exists $\epsilon>0$ such that for all
    \begin{equation*}
        \bm{y} \in K^\epsilon_- := \bar{B}_\epsilon(\bm{x}^\ast) \cap S,  \qquad (t,s) \in J^\epsilon_- := \left\{ (t,s) \in [t^\ast - \epsilon, t^\ast]^2 ~|~ t < s \right\},
    \end{equation*}
    there are no transitions contained in the time interval $[t^\ast - \epsilon, s)$ for any trajectory $\bm{x}(t; \bm{y}, s)$.
    Furthermore, let $M^\epsilon_-$ be defined as
    \begin{equation}
        M^\epsilon_- := \frac{1}{2} \sup_{ \substack{ (t,s) \in J^\epsilon_- \\ \bm{y} \in K^\epsilon_-} } \left|\dot{\bm{x}}(t)\cdot  H_g(\bm{x}(t))\dot{\bm{x}}(t)+\nabla g(\bm{x}(t))\cdot \ddot{\bm{x}}(t)\right|. \label{eq:PerturbedM2}
    \end{equation}
    If $\max \left( t^* - \epsilon - s, \frac{ - \alpha_S^2}{M^\epsilon_-} \right) < t - s < 0$, then
    \begin{equation}
        g(\bm{x}(t; \bm{y}, s)) < 0\, .
        \label{eq:ctsMonoLemma2}
    \end{equation}
\end{lemma}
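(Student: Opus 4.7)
The plan is to mirror the three-step structure of the proof of Lemma \ref{lem:ctsMonotonicTime}, replacing the role of $\bm{f}_+$ by $\bm{f}_-$ and accounting for the sign change caused by $t-s<0$. First I would define
\[
F(t,s,\bm{y}) := \nabla g(\bm{x}(t;\bm{y},s)) \cdot \bm{f}_-(t, \bm{x}(t;\bm{y},s)),
\]
which is continuous in all three arguments for the same reasons stated there ($g\in C^2$, $\bm{f}_-$ continuous, $\bm{x}$ Lipschitz in the initial data). The transversality condition \eqref{eq:transverseA} gives $F(t^*,t^*,\bm{x}^*)\ge \alpha_S^2$, so by continuity there is some $\epsilon>0$ with $F(t,s,\bm{y}) \ge \alpha_S^2/2$ for all $\bm{y}\in K_-^\epsilon$ and $(t,s)\in J_-^\epsilon$. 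Because $F$ does not vanish on the closed interval $[t^*-\epsilon,s]$, transversality is preserved throughout that interval, so backward-in-time trajectories from $(s,\bm{y})\in [t^*-\epsilon,t^*]\times K_-^\epsilon$ have no transition on $[t^*-\epsilon,s)$; they stay in $U_-$.

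Next I would verify that the supremum $M_-^\epsilon$ in \eqref{eq:PerturbedM2} is well-defined. With no transitions on $[t^*-\epsilon,s)$, Hypothesis \ref{hyp:H3} bounds $\ddot{\bm{x}}$; Hypothesis \ref{hyp:H2} gives the existence of the one-sided limit of $\dot{\bm{x}}$ as $t\uparrow s$; and continuity of $\bm{x}$ together with compactness of $[t^*-\epsilon,s]$ bounds $H_g(\bm{x}(t))$. Taking the supremum over the (relatively) compact set $J_-^\epsilon\times K_-^\epsilon$ thus yields a finite constant.

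For the final step, since $g(\bm{x}(s;\bm{y},s))=g(\bm{y})=0$ and $\dot{\bm{x}}(s^-;\bm{y},s)=\bm{f}_-(s,\bm{y})$, Taylor's theorem gives some $\xi$ between $t$ and $s$ with
\[
g(\bm{x}(t;\bm{y},s)) = \nabla g(\bm{y})\cdot \bm{f}_-(s,\bm{y})\,(t-s) + \tfrac{1}{2}\bigl[\dot{\bm{x}}(\xi)\cdot H_g(\bm{x}(\xi))\dot{\bm{x}}(\xi) + \nabla g(\bm{x}(\xi))\cdot\ddot{\bm{x}}(\xi)\bigr](t-s)^2,
\]
so, bounding the second-order term by $M_-^\epsilon(t-s)^2$ and using the uniform transversality estimate $\nabla g(\bm{y})\cdot\bm{f}_-(s,\bm{y})\ge \alpha_S^2$,
\[
g(\bm{x}(t;\bm{y},s)) \le \alpha_S^2(t-s) + M_-^\epsilon (t-s)^2 = (t-s)\bigl[\alpha_S^2 + M_-^\epsilon(t-s)\bigr].
\]
When $-\alpha_S^2/M_-^\epsilon < t-s < 0$ the bracket is positive while $(t-s)<0$, forcing $g(\bm{x}(t;\bm{y},s))<0$. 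The additional constraint $t^*-\epsilon-s < t-s$ just keeps $t$ inside the interval on which the no-transition conclusion and the definition of $M_-^\epsilon$ are valid, so taking $t-s$ larger than the maximum of those two lower bounds finishes the proof.

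The only mildly subtle step will be arranging the Taylor remainder and transversality signs correctly so that the negative factor $(t-s)$ combines with a guaranteed-positive bracket; everything else is a direct translation of the forward-time proof, so I expect no substantive obstacle beyond that careful bookkeeping.
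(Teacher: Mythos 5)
Your proposal is correct and follows essentially the same route as the paper's own proof: the paper likewise carries over the three-step structure of Lemma~\ref{lem:ctsMonotonicTime} with $\bm{f}_+$ replaced by $\bm{f}_-$, and closes with the same Taylor/transversality estimate, written as $g(\bm{x}(t;\bm{y},s)) \leq (s-t)\left(M_-^\epsilon(s-t) - \alpha_S^2\right) < 0$, which is algebraically identical to your factorization $(t-s)\left[\alpha_S^2 + M_-^\epsilon(t-s)\right]$. Your sign bookkeeping is handled correctly, so no substantive difference remains.
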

\begin{proof}
    The first two steps of this proof follow the same format as the proof of Lemma~\ref{lem:ctsMonotonicTime}, with care being taken that we are dealing with times $t,s < t^*$. Moreover, the domain becomes $U_-$ instead of $U_+$. As shown in the proof of the previous Lemma~\ref{lem:ctsMonotonicTime}, the constants $\epsilon, K_-^\epsilon, J_-^\epsilon$ and $M_-^\epsilon$ are well-defined. Finally, to prove \eqref{eq:ctsMonoLemma2}, we utilize a similar estimate to \eqref{eqn:signExp}, only for $t<s$ and $\bm{f}_+$ replaced with $\bm{f}_-$. Since $(t,s) \in J_-^\epsilon$, this implies $t^* - \epsilon - s < t-s $. Moreover if $\frac{- \alpha_S^2}{M_-^\epsilon} < t-s < 0$, then by \eqref{lem:ctsTimeErrorTaylorTime}, with $M$ replaced with $M_-^\epsilon$, \eqref{eqn:signExp} becomes
    \[
        g(\bm{x}(t; \bm{y}, s)) &\leq \underbrace{- \nabla g(\bm{y}) \cdot \bm{f}_-(s, \bm{y})}_{\leq - \alpha_S^2} (s - t) + M_-^\epsilon(s - t)^2\\ &
        \leq \underbrace{(s - t)}_{>0}\underbrace{( M_-^\epsilon(s - t) - \alpha_S^2 )}_{<0} < 0.
    \] Similarly, taking $t-s$ greater than the maximum of $\frac{-\alpha_S^2}{M_-^\epsilon}$ and $t^* - \epsilon - s$ yields the desired result.
\end{proof}

We now prove analogous statements for discrete trajectories, in a similar manner as in the proof of Lemmas~\ref{lem:ctsMonotonicTime} and \ref{lem:ctsMonotonicTime2}.

\begin{lemma}\label{lem:discMonotonicTime}
    Let $\tau_1$, $C_1$, $C_2$ and $\hat{\alpha}_S^2$ be positive constants defined in Lemma~\ref{lem:discTC}. Then there exists $\epsilon_1, \epsilon_2>0$ such that for all
    \begin{equation*}
        \bm{y} \in \hat{K}_+^{\epsilon_1} := \bar{B}_{\epsilon_1}(\bm{x}^\ast) \cap S , \qquad (t,s) \in \hat{J}_+^{\epsilon_2} := \left\{ (t,s) \in [t^\ast, t^\ast + \epsilon_2]^2 ~ | ~ t > s \right\},
    \end{equation*}
    there are no transitions contained in the time interval $(s, t^\ast + \epsilon_2]$ for any trajectory $\bm{x}^\tau(t; \bm{y}, s)$. Furthermore, let $\hat{M}_+^{\epsilon_1, \epsilon_2}$ be defined as
    \begin{equation*}
        \hat{M}^{\epsilon_1, \epsilon_2}_+ := \sup_{\substack{(t,s) \in \hat{J}_+^{\epsilon_2} \\ \eta \in [0,1] \\ \bm{y} \in \hat{K}_+^{\epsilon_1}}}
        \lvert \bm{f}^\tau_+(s, \bm{y}, t, \bm{x}^\tau(t)) \cdot H_g( \bm{y} + \eta (\bm{x}^\tau(t) - \bm{y}) ) \bm{f}^\tau_+(s, \bm{y}, t, \bm{x}^\tau(t)) \rvert \, .\label{eq:PerturbedHatM}
    \end{equation*}
    If $0 < t - s < \min \left( \hat{t}^\ast + \epsilon - s, \frac{\hat{\alpha}_S^2}{\hat{M}^{\epsilon_1, \epsilon_2}_+} \right)$, then
    \begin{equation}
        g(\bm{x}^\tau(t; \bm{y}, s)) > 0
        \label{eq:disMonoLemma}
    \end{equation}
\end{lemma}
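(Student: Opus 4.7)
The plan is to follow the structure of Lemma~\ref{lem:ctsMonotonicTime}, with two key substitutions: the continuous transversality condition~\eqref{eq:transverseA} is replaced by the discrete transversality of Lemma~\ref{lem:discTC}, and the temporal Taylor expansion used there is replaced by the spatial Taylor expansion~\eqref{eq:gSpatialTaylorPoly} already employed in the proof of Lemma~\ref{lem:discTimeError}.

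First I would choose $\epsilon_1$ and $\epsilon_2$ small enough that the hypotheses of Lemma~\ref{lem:discTC} are met \emph{uniformly} for every evaluation of $\bm{f}_+^\tau$ arising along the discrete trajectory. Specifically, I would set $\epsilon_1 \leq C_2\tau_1^p$ and $\epsilon_2 \leq C_1 \tau_1$, where $\tau_1, C_1, C_2$ come from Lemma~\ref{lem:discTC}, so that $\hat{K}_+^{\epsilon_1}\subset B_{C_2\tau_1^p}(\bm{x}^\ast)$ and the time coordinates of $\hat{J}_+^{\epsilon_2}$ lie in $B_{C_1\tau_1}(t^\ast)$. Using the assumed Lipschitz continuity of $\bm{x}^\tau(\cdot;\bm{y},s)$ in its initial data and its time argument, a further shrinking of $\epsilon_2$ would guarantee $\bm{x}^\tau(t;\bm{y},s) \in B_{C_2\tau_1^p}(\bm{x}^\ast)$ for every $t \in [s, t^\ast+\epsilon_2]$, $(t,s) \in \hat{J}_+^{\epsilon_2}$, and $\bm{y} \in \hat{K}_+^{\epsilon_1}$. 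Lemma~\ref{lem:discTC} then delivers the uniform lower bound
\[ \nabla g(\bm{y}) \cdot \bm{f}_+^\tau(s, \bm{y}, t, \bm{x}^\tau(t;\bm{y},s)) \geq \hat{\alpha}_S^2 > 0. \]
Well-definedness of $\hat{M}_+^{\epsilon_1,\epsilon_2}$ is immediate from $g \in C^2(\overline{U} \to \mathbb{R})$, the boundedness of $\bm{f}_+^\tau$ over the compact parameter set, and compactness of the closed segment over which $\eta$ ranges.

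Next, I would apply the spatial Taylor expansion~\eqref{eq:gSpatialTaylorPoly} with base point $\bm{y}$ and target $\bm{x}^\tau(t;\bm{y},s)$. Since $\bm{y} \in S$ gives $g(\bm{y}) = 0$, and the one-step relation $\bm{x}^\tau(t;\bm{y},s) - \bm{y} = (t-s)\,\bm{f}_+^\tau(s,\bm{y},t,\bm{x}^\tau(t;\bm{y},s))$ holds, the expansion becomes, for some $\eta \in (0,1)$,
\[ g(\bm{x}^\tau(t;\bm{y},s)) = (t-s) \nabla g(\bm{y}) \cdot \bm{f}_+^\tau + \tfrac{1}{2}(t-s)^2 \bm{f}_+^\tau \cdot H_g(\bm{y}+\eta(\bm{x}^\tau(t)-\bm{y}))\bm{f}_+^\tau. \]
Bounding the linear term below by $\hat{\alpha}_S^2(t-s)$ via the uniform discrete transversality bound and the quadratic term in magnitude above by $\tfrac{1}{2}\hat{M}_+^{\epsilon_1,\epsilon_2}(t-s)^2$ yields $g(\bm{x}^\tau(t;\bm{y},s)) \geq (t-s)\bigl[\hat{\alpha}_S^2 - \tfrac{1}{2}\hat{M}_+^{\epsilon_1,\epsilon_2}(t-s)\bigr]$, which is strictly positive whenever $0 < t-s < \hat{\alpha}_S^2/\hat{M}_+^{\epsilon_1,\epsilon_2}$, delivering~\eqref{eq:disMonoLemma} (with slack to spare). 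The ``no transitions on $(s, t^\ast+\epsilon_2]$'' conclusion then follows by applying the same pointwise estimate step by step along the discrete trajectory, each such step relying on the uniformity of the transversality bound.

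The main obstacle I anticipate is precisely propagating the ball membership $\bm{x}^\tau(\cdot;\bm{y},s) \in B_{C_2\tau_1^p}(\bm{x}^\ast)$ uniformly through the trajectory, so that Lemma~\ref{lem:discTC} remains applicable at every intermediate evaluation rather than only at the endpoints. This is essentially a Lipschitz bootstrap: the discrete Lipschitz continuity assumption (in both initial data and time) yields a Gr\"onwall-type control on $\bm{x}^\tau(t;\bm{y},s) - \bm{x}^\ast$ in terms of $\norm{\bm{y} - \bm{x}^\ast} + |s - t^\ast| + |t - s|$, and $\epsilon_1$ and $\epsilon_2$ must be chosen small enough that the resulting deviation never exits the ball $B_{C_2\tau_1^p}(\bm{x}^\ast)$. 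Once this uniform control is in hand, the remaining algebra is the routine Taylor-plus-transversality computation sketched above.
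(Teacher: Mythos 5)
Your proposal is correct and follows essentially the same three-step route as the paper's own proof: (i) choose $\epsilon_1,\epsilon_2$ so that the discrete transversality bound of Lemma~\ref{lem:discTC} applies uniformly along the discrete trajectory (the paper takes $\epsilon_1=\min(C_1\tau_1,C_2\tau_1^p)$ and shrinks $\epsilon_2$ by continuity of $\bm{x}^\tau$ in time, where you invoke the assumed Lipschitz continuity — an inessential difference); (ii) verify well-definedness of $\hat{M}_+^{\epsilon_1,\epsilon_2}$ by compactness and consistency; (iii) combine the spatial Taylor expansion~\eqref{eq:gSpatialTaylorPoly} with the one-step relation and the uniform transversality bound to obtain $g(\bm{x}^\tau(t;\bm{y},s))\geq (t-s)\bigl(\hat{\alpha}_S^2-\hat{M}_+^{\epsilon_1,\epsilon_2}(t-s)\bigr)>0$. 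Your observation that the factor $\tfrac12$ on the Hessian term gives slack relative to the stated threshold is consistent with the paper, which simply drops that factor.
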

\begin{proof}
    The proof proceeds similarly in three steps; namely, we first define a time interval where discrete trajectories starting on the interface are guaranteed to stay in $U_+$. Next, we show that $\hat{M}^{\epsilon_1,\epsilon_2}_+$ is well-defined. Finally, we introduce a key estimate of the lemma and use it to prove an explicit bound on time. Note that the key estimate in this discrete case is subtly different than in the previous lemma.

    For the first step, let $\tau_1$, $C_1$, $C_2$ and $\alpha_S^2$ be the positive constants defined in the discrete transversality condition~\ref{lem:discTC}. Note that the function $F$ used in Lemma~\ref{lem:ctsMonotonicTime} is directly analogous to the discrete transversality condition. Let $\epsilon_1 := \min(C_1 \tau_1, C_2 \tau_1^p)$. By continuity of the discrete trajectory in time, there exists a positive $\epsilon_2 \leq \epsilon_1$ such that for $\bm{y} \in B_{\epsilon_1}(\bm{x}^\ast) \cap S$ and $s \in [t^\ast, t^\ast + \epsilon_2]$, then $\bm{x}^\tau(t; \bm{y}, s) \in B_{\epsilon_1}(\bm{x}^\ast)$ if $t \in (s, t^\ast + \epsilon_2]$. Then the discrete transversality condition holds  and $\bm{x}^\tau(t; \bm{y}, s) \in B_{\epsilon_1}(\bm{x}^\ast) \cap U_+$.
    
    Next, we begin the second step of the proof by verifying the well-definedness of $\hat{M}^{\epsilon_1, \epsilon_2}_+$. First, the supremum of $H_g$ can be taken over the compact set $(\bm{y}, \eta, t) \in \hat{K}_{\epsilon_1} \times [0,1] \times [s, t^\ast + \epsilon_2]$ and hence is well-defined and bounded. Next, since by the first step, $\bm{x}^\tau(t; \bm{y}, s) \in U_+$ for all $\bm{y} \in \hat{K}_{\epsilon_1}$ and $(t,s) \in \hat{J}_{\epsilon_2}$. Then by consistency of $\bm{f}_+^\tau$, we can bound $\norm{\bm{f}_+^\tau(s, \bm{y}, t, \bm{x}^\tau(t))}$ for all $\bm{y} \in \hat{K}_{\epsilon_1}$ and $(t,s) \in \hat{J}_{\epsilon_2}$ by 
    \[
        \norm{\bm{f}_+^\tau(s, \bm{y}, t, \bm{x}^\tau(t))} &\leq \norm{\bm{f}_+(t^\ast, \bm{x}^\ast) + \bm{f}_+^\tau(s, \bm{y}, t, \bm{x}^\tau(t))} + \norm{\bm{f}_+(t^\ast, \bm{x}^\ast)} \\
        &\leq C_3 \tau_1^p + \norm{\bm{f}_+(t^\ast, \bm{x}^\ast)} < \infty
    \]
    Thus, combining with the Cauchy Schwartz inequality and properties of induced operator norm in $\ell_2$, the supremum taken in $\hat{M}^{\epsilon_1, \epsilon_2}_+$ is bounded above by $\norm{\bm{f}_+^\tau}^2 \norm{H_g} < \infty$.
    
    For the last step to prove \ref{eq:disMonoLemma}, note the following which holds for $t>s$:
    \begin{equation}
        \begin{split}
            g(\bm{x}^\tau&(t; \bm{y}, s)) \\
            &= \nabla g(\bm{y})\cdot \bm{f}^\tau_+(s, \bm{y}, t, \bm{x}^\tau(t))(t-s)\\
            &\quad+\left(\frac{}{}g(\bm{x}^\tau(t))-\left[g(\bm{x}^\tau(s)) +\nabla g(\bm{x}^\tau(s))\cdot \bm{f}^\tau_+(s, \bm{y}, t, \bm{x}^\tau(t))\right](t-s)\right).
        \end{split}\label{eq:discSignExp}
    \end{equation}
    Since $(t,s) \in J_+^{\epsilon_2}$, this implies $t - s < t^* + \epsilon - s$. Moreover if $0 < t - s < \frac{\hat{\alpha}_S^2}{\hat{M}^{\epsilon_1, \epsilon_2}_+}$, then by~\eqref{eq:discGX-} with $\hat{M}$ replaced by $\hat{M}^{\epsilon_1, \epsilon_2}_+$, ~\eqref{eq:discSignExp} becomes
    
    \begin{equation*}
        \begin{split}
            g(\bm{x}^\tau(t; \bm{y}, s)) &\geq \underbrace{\nabla g(\bm{y}) \cdot \bm{f}^\tau_+(s, \bm{y}, t, \bm{x}^\tau(t))}_{\geq \hat{\alpha}^2_S} (t-s) - \hat{M}^{\epsilon_1, \epsilon_2}_+(t-s)^2 \\
                &\geq \underbrace{(t-s)}_{> 0} \underbrace{\left( \hat{\alpha}^2_S - \hat{M}^{\epsilon_1, \epsilon_2}_+ (t - s) \right)}_{> 0}  > 0
        \end{split}
    \end{equation*}
    Taking $t-s$ less than the minimum of $t^* + \epsilon - s$ and $\frac{\hat{\alpha}_S^2}{\hat{M}^{\epsilon_1, \epsilon_2}_+}$ yields the desired result.
\end{proof}
Next, we show another version of Lemma~\ref{lem:discMonotonicTime} for $s>t$. Since the analysis is once again similar, we only highlight the differences in the proof.

\begin{lemma}\label{lem:discMonotonicTime2}
    Let $\tau_1$, $C_1$, $C_2$ and $\hat{\alpha}_S^2$ be positive constants defined in Lemma~\ref{lem:discTC}. Then there exists $\epsilon_1, \epsilon_2>0$ such that for all
    \begin{equation*}
        \bm{y} \in \hat{K}_-^{\epsilon_1} := \bar{B}_{\epsilon_1}(\bm{x}^\ast) \cap S , \qquad (t,s) \in \hat{J}_-^{\epsilon_2} := \left\{ (t,s) \in [t^\ast - \epsilon_2, t^\ast]^2 ~ | ~ t < s \right\},
    \end{equation*}
    there are no transitions contained in the time interval $[t^\ast - \epsilon_2, s)$ for any trajectory $\bm{x}^\tau(t; \bm{y}, s)$. Furthermore, let $\hat{M}_-^{\epsilon_1, \epsilon_2}$ be defined as
    \begin{equation*}
        \hat{M}^{\epsilon_1, \epsilon_2}_- := \sup_{\substack{(t,s) \in \hat{J}_-^{\epsilon_2} \\ \eta \in [0,1] \\ \bm{y} \in \hat{K}_-^{\epsilon_1}}}
        \lvert \bm{f}^\tau_-(s, \bm{y}, t, \bm{x}^\tau(t)) \cdot H_g( \bm{y} + \eta (\bm{x}^\tau(t) - \bm{y}) ) \bm{f}^\tau_-(s, \bm{y}, t, \bm{x}^\tau(t)) \rvert \, .\label{eq:PerturbedHatM2}
    \end{equation*}
    If $\max \left( \hat{t}^\ast - \epsilon - s, \frac{- \hat{\alpha}_S^2}{\hat{M}^{\epsilon_1, \epsilon_2}_-} \right) < t - s < 0$, then
    \begin{equation}
        g(\bm{x}^\tau(t; \bm{y}, s)) < 0 \label{eq:disMonoLemma2}
    \end{equation}
\end{lemma}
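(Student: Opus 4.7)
The plan is to mirror the three-step structure of Lemma~\ref{lem:discMonotonicTime}, substituting $\bm{f}_-^\tau$ for $\bm{f}_+^\tau$, the subdomain $U_-$ for $U_+$, and adapting every sign to the case $t<s$ (so both times precede $t^\ast$, and the discrete trajectory is being traced \emph{backwards} from $s$). All constants from Lemma~\ref{lem:discTC} carry over unchanged, since the discrete transversality condition there is stated symmetrically for $\bm{f}_\pm^\tau$ about $(\bm{x}^\ast,t^\ast)$.

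First I would define $\epsilon_1 := \min(C_1\tau_1, C_2\tau_1^p)$ and use continuity of the discrete solution in time to choose $\epsilon_2 \leq \epsilon_1$ small enough that whenever $\bm{y} \in \bar{B}_{\epsilon_1}(\bm{x}^\ast)\cap S$ and $s\in[t^\ast-\epsilon_2,t^\ast]$, the backward trajectory satisfies $\bm{x}^\tau(t;\bm{y},s) \in B_{\epsilon_1}(\bm{x}^\ast)$ for all $t\in[t^\ast-\epsilon_2,s)$. On this set, Lemma~\ref{lem:discTC} yields $\nabla g(\bm{x}^\tau(t;\bm{y},s))\cdot \bm{f}_-^\tau(s,\bm{y},t,\bm{x}^\tau(t))\geq \hat{\alpha}_S^2$, so no transition can occur and $\bm{x}^\tau(t;\bm{y},s)\in B_{\epsilon_1}(\bm{x}^\ast)\cap U_-$.

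Next I would check that $\hat{M}_-^{\epsilon_1,\epsilon_2}$ is finite. The argument is identical to the $+$ case: $H_g$ is continuous on the compact set $\hat{K}_-^{\epsilon_1}\times[0,1]\times [t^\ast-\epsilon_2,t^\ast]$, and the consistency inequality~\eqref{eqn:consistency} gives a uniform bound $\norm{\bm{f}_-^\tau(s,\bm{y},t,\bm{x}^\tau(t))} \leq C_3\tau_1^p + \norm{\bm{f}_-(t^\ast,\bm{x}^\ast)}$, so the Cauchy--Schwarz bound on the quadratic form yields finiteness of the supremum.

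For the key estimate, since $g(\bm{y})=g(\bm{x}^\tau(s;\bm{y},s))=0$ and $\bm{x}^\tau(t) - \bm{y} = (t-s)\bm{f}_-^\tau(s,\bm{y},t,\bm{x}^\tau(t))$ by the scheme, Taylor expansion of $g$ about $\bm{y}$ with remainder expressed via $H_g$ gives an analogue of~\eqref{eq:discGX-},
\[
g(\bm{x}^\tau(t;\bm{y},s)) = \nabla g(\bm{y})\cdot \bm{f}_-^\tau(s,\bm{y},t,\bm{x}^\tau(t))\,(t-s) + R,
\]
with $|R|\leq \tfrac{1}{2}\hat{M}_-^{\epsilon_1,\epsilon_2}(s-t)^2$. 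Writing $s-t>0$ and using $\nabla g(\bm{y})\cdot \bm{f}_-^\tau \geq \hat{\alpha}_S^2$ on the relevant set,
\[
g(\bm{x}^\tau(t;\bm{y},s)) \leq -\hat{\alpha}_S^2 (s-t) + \tfrac{1}{2}\hat{M}_-^{\epsilon_1,\epsilon_2}(s-t)^2 = (s-t)\bigl(\tfrac{1}{2}\hat{M}_-^{\epsilon_1,\epsilon_2}(s-t) - \hat{\alpha}_S^2\bigr).
\]
Requiring $s-t < \hat{\alpha}_S^2/\hat{M}_-^{\epsilon_1,\epsilon_2}$ makes the second factor negative, and restricting to $s-t < s-(t^\ast-\epsilon_2)$ keeps us inside the set where the above estimates are valid. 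Combining the two conditions gives exactly the stated lower bound $\max(\hat{t}^\ast-\epsilon-s,\,-\hat{\alpha}_S^2/\hat{M}_-^{\epsilon_1,\epsilon_2}) < t-s < 0$, and~\eqref{eq:disMonoLemma2} follows. The only subtlety worth double-checking during the write-up is that the factor $\tfrac{1}{2}$ in $\hat{M}^{\epsilon_1,\epsilon_2}_-$ is consistent with the definition in the lemma statement; if not, the constants absorb into the final bound without changing the conclusion.
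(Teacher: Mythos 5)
Your proposal is correct and follows essentially the same route as the paper: mirror the three-step argument of Lemma~\ref{lem:discMonotonicTime} with $\bm{f}_-^\tau$, $U_-$, and reversed signs, then factor the Taylor estimate to get $g<0$; your remark about the missing factor of $\tfrac12$ in the definition of $\hat{M}_-^{\epsilon_1,\epsilon_2}$ is well taken and harmless, exactly as you say. The only detail the paper makes explicit that you gloss over is that the \emph{backward} discrete flow $\bm{x}^\tau(t;\bm{y},s)$ for $t<s$ is well-defined, which it justifies by invoking Lemmas~\ref{lem:wellposed-lemma1} and~\ref{lem:wellposed-lemma2} with time reversed.
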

\begin{proof}
    First, by Lemmas~\ref{lem:wellposed-lemma1} and \ref{lem:wellposed-lemma2}, the discrete forward flow is well-defined. Thus, by replacing $\bm{f}_+^\tau$ with $\bm{f}_-^\tau$ and integrating backward in time, the backward flow is well-defined. So $\bm{x}^\tau(t; \bm{y}, s)$ is well-defined for $t<s$.
    
    The next two steps of this proof follow the same format as the proof of Lemma~\ref{lem:discMonotonicTime}, with care being taken that we are dealing with time $t,s < t^*$. Moreover, the domain becomes $U_-$ instead of $U_+$. As shown in the proof of the previous Lemma~\ref{lem:discMonotonicTime}, the constants $\epsilon_1, \epsilon_2, K_-^{\epsilon_1}, J_-^{\epsilon_2}$ and $M_-^{\epsilon_1, \epsilon_2}$ are well-defined. Finally, to prove \eqref{eq:disMonoLemma2}, we utilize an estimate similar to \eqref{eq:discSignExp}, only for $t < s$ and $\bm{f}_+^\tau$ replaced with $\bm{f}_-^\tau$. Since $(t,s) \in J_-^{\epsilon_2}$, this implies $t^* - \epsilon - s < t-s$. Moreover if $\frac{- \alpha_S^2}{M_-^\epsilon} < t-s < 0$, then by \eqref{lem:ctsTimeErrorTaylorTime}, with $M$ replaced with $M_-^\epsilon$, \eqref{eq:discSignExp} becomes
    
    \begin{equation*}
        \begin{split}
            g(\bm{x}^\tau(t; \bm{y}, s)) &\leq \underbrace{- \nabla g(\bm{y}) \cdot \bm{f}^\tau_-(t, \bm{x}^\tau(t), s, \bm{y})}_{\leq - \hat{\alpha}^2_S} (s-t) - \hat{M}_-^{\epsilon_1, \epsilon_2}(s-t)^2 \\
                &\leq \underbrace{(s - t)}_{> 0} \underbrace{\left( \hat{M}_-^{\epsilon_1, \epsilon_2} (s - t) - \hat{\alpha}^2_S \right)}_{< 0} < 0
        \end{split}
    \end{equation*}
    Taking $t-s$ greater than the maximum of $t^* - \epsilon - s$ and $\frac{- \hat{\alpha}_S^2}{\hat{M}^{\epsilon_1, \epsilon_2}_-}$ yields the desired result.
\end{proof}

\end{document}